\crefname{hypothesis}{Hypothesis}{Hypotheses}
\title{Accelerating preconditioned ADMM via degenerate proximal point mappings\thanks{Submitted to the editors DATE.
		\funding{The work of Defeng Sun was supported by RGC Senior Research Fellow Scheme No. SRFS2223-5S02 and  GRF Project No. 15304721. The work of Yancheng Yuan was supported by the Research Center for Intelligent Operations Research and The Hong Kong Polytechnic University under grants P0038284 and P0045485. The work of Guojun Zhang was supported in part by the Huawei Collaborative Grant ``Fast Optimal Transport Algorithms for Communications and Networking" with UGC's matching fund. The work of Xinyuan Zhao was supported in part by the National Natural Science Foundation of China under Project No. 12271015 and the Department of Applied Mathematics, the Hong Kong Polytechnic University, where this project was initiated during her visit in the summer of 2023.}}}
\author{Defeng Sun\thanks{Department of Applied Mathematics, The Hong Kong Polytechnic University, Hung Hom, Hong Kong
		(Corresponding author. \email{defeng.sun@polyu.edu.hk}).}
	\and Yancheng Yuan\thanks{Department of Applied Mathematics, The Hong Kong Polytechnic University, Hung Hom, Hong Kong
		(\email{yancheng.yuan@polyu.edu.hk}).}
	\and Guojun Zhang\thanks{Department of Applied Mathematics, The Hong Kong Polytechnic University, Hung Hom, Hong Kong
		(\email{guojun.zhang@connect.polyu.hk}).}
	\and Xinyuan Zhao\thanks{Department of Mathematics, Beijing University of Technology, Beijing, P.R. China.
		(\email{xyzhao@bjut.edu.cn}).} }
\newtheorem{assumption}{Assumption}
\def\hcT{{\widehat { \cal T}}}
\def\hcQ{{\widehat { \cal Q}}}
\def\hcF{{\widehat { \cal F}}}
\def\tcT{\widetilde{\mathcal{T}}}
\def\tcF{\widetilde{\mathcal{F}}}
\def\cM{{\cal M}}
\def\cC{{\cal C}}
\def\cT{{\cal T}}
\def\cH{{\cal H}}
\def\cU{{\cal U}}
\def\cI{{\cal I}}
\def\hy{{\hat y}}
\def\hz{{\hat z}}
\def\hx{{\hat x}}
\def\hw{{\hat w}}
\def\by{{\bar y}}
\def\bz{{\bar z}}
\def\bx{{\bar x}}
\def\tx{{\widetilde x}}
\def\bw{{\bar w}}
\def\bu{{\bar u}}
\begin{document}
	\maketitle
	\begin{abstract}
		In this paper, we aim to accelerate a preconditioned alternating direction method of multipliers (pADMM), whose proximal terms are convex quadratic functions, for solving linearly constrained convex optimization problems. To achieve this, we first reformulate the pADMM into a form of proximal point method (PPM) with a positive semidefinite preconditioner which can be degenerate due to the lack of strong convexity of the proximal terms in the pADMM. Then we accelerate the pADMM by accelerating the reformulated degenerate PPM (dPPM). Specifically, we first propose an accelerated dPPM by integrating the Halpern iteration and the fast Krasnosel'ski\u{i}-Mann iteration into it, achieving asymptotic $o(1/k)$ and non-asymptotic $O(1/k)$ convergence rates. Subsequently, building upon the accelerated dPPM, we develop an accelerated pADMM algorithm that exhibits both asymptotic $o(1/k)$ and non-asymptotic $O(1/k)$ nonergodic convergence rates concerning the {Karush–Kuhn–Tucker} residual and the primal objective function value gap. Preliminary numerical experiments validate the theoretical findings, demonstrating that the accelerated pADMM outperforms the pADMM in solving convex quadratic programming problems.
	\end{abstract}
	
	\begin{keywords}
		preconditioned ADMM, degenerate PPM, acceleration, Halpern iteration, Krasnosel'ski\u{i}-Mann iteration, convergence rate
	\end{keywords}
	
	\begin{MSCcodes}
		90C06, 90C20, 90C25, 68Q25
	\end{MSCcodes}
	
	\section{Introduction}
	Let $\mathbb{X}$, $\mathbb{Y}$, and $\mathbb{Z}$ be three finite-dimensional real Euclidean spaces, each equipped with an inner product $\langle \cdot, \cdot \rangle$ and its induced norm $\|\cdot\|$. In this paper, we aim  to settle the issue on how to accelerate an alternating direction method of multipliers (ADMM) with semi-proximal terms for solving the following convex optimization problem:
	%two-block convex optimization problem with linear constraints:
	\begin{equation}\label{primal}
		\begin{array}{cc}
			\min _{y \in \mathbb{Y} , z \in \mathbb{Z}} & f_{1}(y)+f_{2}(z)\\
			\text{subject to}& {B}_{1}y+{B}_{2}z=c,
		\end{array}
	\end{equation}
	where $f_{1}: \mathbb{Y} \to (-\infty, +\infty]$ and $f_{2}: \mathbb{Z} \to (-\infty, +\infty]$ are two proper closed convex functions, %which may take extended value;
	$B_1: \mathbb{Y} \to \mathbb{X}$ and $B_2: \mathbb{Z} \to \mathbb{X}$ are two given linear operators, and $c\in \mathbb{X}$ is a given point. {For a linear operator $B:\mathbb{X}\to\mathbb{X}$, we define its norm as $\|B\|:=\operatorname{sup}_{\|x\|\leq 1} \|Bx\|$. } Moreover, for any convex function $f: \mathbb{X}\to (-\infty, +\infty]$, we use  $\operatorname{dom}(f):=\{x \in \mathbb{X}: f(x)<\infty\}$  to denote  its effective domain,  $f^*: \mathbb{X} \to (-\infty,+\infty]$ to represent its Fenchel conjugate, and  ${\rm Prox}_{f}(\cdot)$ to denote its associated Moreau-Yosida proximal mapping  \cite{rockafellar1998variational}, respectively. Let $\sigma >0$ be a given penalty parameter. The augmented Lagrangian function of problem \eqref{primal} is defined by, for any  $(y,z,x) \in \mathbb{Y}\times \mathbb{Z}\times \mathbb{X}$,
	\[
	L_{\sigma}(y, z; x) := f_1(y) + f_2(z) + \langle x, B_1 y + B_2 z - c \rangle+\frac{\sigma}{2}\|B_1 y + B_2 z - c\|^2.
	\]
	The {dual of problem   \eqref{primal} is given by}
	\begin{equation}\label{dual}
		\max _{x \in \mathbb{X}}\left\{-f_{1}^{*}(-B_{1}^* x)-f_{2}^{*}(-B_{2}^* x)-\langle c, x\rangle\right\},
	\end{equation}
	where $B_1^*: \mathbb{X} \to \mathbb{Y}$ and $B_2^*: \mathbb{X} \to \mathbb{Z}$ are the adjoint of $B_1$ and $B_2$, respectively. For ease of notation, let $w:=(y, z, x)$ and $\mathbb{W}:=\mathbb{Y}\times \mathbb{Z}\times \mathbb{X}$. For any self-adjoint positive semidefinite linear operator $\cM: \mathbb{X} \to \mathbb{X}$, denote $\|x\|_{\cM}:=\sqrt{\langle x, \cM x\rangle}$. In extending the framework of ADMM with semi-proximal terms and larger dual step lengths studied in \cite{fazel2013hankel} to cover the case of generalized ADMM in the sense of {Eckstein} and  Bertsekas \cite{eckstein1992douglas}, Xiao et al.  \cite{xiao2018generalized} considered the following preconditioned ADMM (pADMM) {for solving problem \eqref{primal}:}
	% with $\cT_1$ and $\cT_2$ being positive semidefinite for solving problem \eqref{primal}:
	\begin{algorithm}[ht]
		\caption{A pADMM for solving two-block convex optimization problem \eqref{primal}}
		\label{alg:pADMM}
		\begin{algorithmic}[1]
			\STATE {Input: Let $\cT_1$ and $\cT_2$ be two self-adjoint positive semidefinite linear operators on $\mathbb{Y}$ and $\mathbb{Z}$, respectively. Choose an initial point $w^{0}=(y^{0}, z^{0}, x^{0})\in \operatorname{dom} (f_1) \times \operatorname{dom} (f_2)\times \mathbb{X}$. Set parameters $\sigma > 0$ and $\rho_k \in (0, 2]$ for any $k\ge 0$. For $k=0,1, \ldots,$ perform the following steps in each iteration.}
			\STATE {Step 1. $\bz^{k}=\underset{z \in \mathbb{Z}}{\arg \min }\left\{L_\sigma\left(y^k, z ; x^k\right)+\frac{1}{2}\|z-z^{k}\|_{\mathcal{T}_2}^2\right\}$.}
			\STATE{Step 2. $\bx^{k}={x}^k+\sigma (B_{1}{y}^{k}+B_{2}\bz^{k}-c) $.}
			\STATE {Step 3. $\by^{k}=\underset{y \in \mathbb{Y}}{\arg \min }\left\{L_\sigma\left(y, \bz^{k} ; \bx^{k}\right)+\frac{1}{2}\|y-y^{k}\|_{\mathcal{T}_1}^2\right \}$.
			}
			\STATE {Step 4. $w^{k+1}= (1-\rho_k){w}^{k}+ \rho_k \bw^{k}$.}
		\end{algorithmic}
	\end{algorithm}
	
	Since the two linear operators $\mathcal{T}_1$ and $\mathcal{T}_2$  in {Algorithm \ref{alg:pADMM} are only assumed to be positive semidefinite}, the pADMM framework includes ADMM \cite{gabay1976dual,glowinski1975approximation}, proximal ADMM \cite{eckstein1994some}, and semi-proximal ADMM \cite{fazel2013hankel} as special cases. Specifically, when $\mathcal{T}_1=0$, $\mathcal{T}_2=0$, and $\rho_k \equiv 1$, pADMM corresponds to the ADMM \cite{gabay1976dual,glowinski1975approximation} with a unit dual step-length. Additionally, if both $\mathcal{T}_1$ and $\mathcal{T}_2$ are positive definite, then the above pADMM scheme with $\rho_k \equiv 1$ reduces to the proximal ADMM proposed by Eckstein \cite{eckstein1994some} with {possible changes of inner products}. Moreover, when $\rho_k \equiv 1$, pADMM corresponds to a special case studied in Fazel et al. \cite{fazel2013hankel}   {under the setting of semi-proximal ADMM}. On the other hand, it is worth highlighting that pADMM is capable of handling multi-block convex optimization problems by introducing the symmetric Gauss-Seidel (sGS) operator \cite{li2016schur,li2019block} as a linear operator in the proximal term, which has proven to be effective in solving large-scale optimization problems \cite{li2016schur,li2018qsdpnal,liang2022qppal,xiao2018generalized}. Therefore, given the broad applicability and effectiveness of the pADMM framework, investigating the acceleration of pADMM is of significant interest.
	
	Before discussing {the} acceleration techniques, we mention several developments in the convergence rate analysis of pADMM that are relevant to our paper: Monteiro and Svaiter \cite{monteiro2013iteration} first established an ergodic $O(1/k)$  convergence rate for the ADMM with a unit dual stepsize in terms of the Karush-Kuhn-Tucker (KKT) type residual; Davis and Yin \cite{davis2016convergence} provided a nonergodic iteration complexity bound of $o(1/\sqrt{k})$ for the ADMM with a unit dual stepsize, focusing on primal feasibility violations and the primal objective function value gap; and  Cui et al. \cite{cui2016convergence} demonstrated that a majorized ADMM, including the classical ADMM, exhibits a nonergodic $O(1/\sqrt{k})$  convergence rate concerning the KKT optimality condition.
	
	To further enhance the efficiency of the pADMM, researchers have explored two main approaches for accelerations. One approach involves integrating Nesterov's extrapolation \cite{beck2009fast,nesterov1983method} directly into the pADMM to develop accelerated variants. For instance,  when one of the objective functions is strongly convex, Xu \cite{xu2017accelerated} proposed an accelerated linearized ADMM (LADMM) with an ergodic $O(1/k^2)$ convergence rate concerning feasibility violations and objective function values. For more results on the strongly convex case, {one may} refer to \cite{goldstein2014fast,tang2023self} { and the references therein}. On the other hand, {in the absence of strong convexity assumption}, which is a primary focus of this paper, several accelerated ADMM {versions} with a convergence rate of $O(1/k)$ have been introduced. Specifically, assuming $f_2(\cdot)$ {to be a} smooth convex function with $L_{f_2}$-Lipschitz continuous gradient, Ouyang et al. \cite{ouyang2015accelerated} proposed an accelerated variant of LADMM. In terms of {function values} and feasibility violations, the ergodic convergence rate of this method associated with the $L_{f_2}$ part surpasses $O(1/k)$, while the ergodic rate for other parts remains $O(1/k)$ (see Tables 2 and 3 in \cite{ouyang2015accelerated}). {Turning to the nonergodic rates along this approach, two notable works are done by Li and Lin \cite{li2019accelerated} and Sabach and Teboulle \cite{sabach2022faster}:}
	
	In \cite{li2019accelerated}, Li and Lin modified the accelerated LADMM proposed in \cite{ouyang2015accelerated} to obtain a nonergodic $O(1/(1+ k(1-\tau)))$ convergence rate in terms of function values and feasibility violations {with the dual step length $\tau$ to be restricted in   $  (0.5,1)$.} In their algorithm, {the largest eigenvalues  $\lambda_{\max}(\cT_i^k)$ of $\cT_i^k$ for $i=1,2$ vary with $k$ and satisfy}
	$$
	\lambda_{\max}(\cT_1^k) \sim O(1+k(1-\tau)),\quad  \lambda_{\max}(\cT_2^k) \sim O(1+k(1-\tau))
	$$
	at the $k$-th iteration, as indicated in formulas (6a) and (6b) of \cite{li2019accelerated}. This implies that the primal step length approaches zero as $k$ tends to infinity. Furthermore, when the dual step length $\tau$ approaches one, the $O(1/(1+ k(1-\tau)))$ complexity result will be lost (see Theorem 1 of \cite{li2019accelerated}).
	{Different from Li and Lin \cite{li2019accelerated}}, in \cite{sabach2022faster}, Sabach and Teboulle introduced an accelerated variant of pADMM with a nonergodic $O(1/k)$ convergence rate in terms of function values and feasibility violations when $\mathcal{T}_2$ is positive definite. {Note that the dual step length $\mu$ in Sabach and Teboulle's algorithm is required to satisfy}
	$$
	\mu \in (0,\delta], \quad \delta=1- \frac{  \sigma \lambda_{\max }\left(B_{2}^* B_{2}\right)}{\left(\sigma \lambda_{\max }\left(B_{2}^* B_{2}\right)+\lambda_{\min }\left(\cT_2\right)\right)} <1
	$$
	{according to} Lemma 5.7 of \cite{sabach2022faster}, where $\lambda_{\min}(\cT_2)$ is the smallest eigenvalue of $\cT_2$. When $\mathcal{T}_2$ tends to {be degenerate, i.e.,
		$\lambda_{\min}(\cT_2)$} goes to zero, the dual step length $\mu$ approaches zero as $\delta$ tends to zero, {which implies that the obtained complexity bounds} blow up  (see formulas (4.11) and (4.12) in Theorem 4.5 of \cite{sabach2022faster}). {Thus, to the best of our knowledge, there is still a big gap in using this acceleration technique to handle the case where both $\mathcal{T}_1$ and $\mathcal{T}_2$ are positive semidefinite and/or with large step lengths.}
	
	Another approach for accelerating the pADMM is first to reformulate it as a fixed-point iterative method, if possible, and then to accelerate the pADMM by accelerating the fixed-point iterative method. For instance, Kim \cite{kim2021accelerated} introduced an accelerated proximal point method (PPM) \cite{rockafellar1976augmented,rockafellar1976monotone} {for the purpose of
		achieving a fast rate of $O(1/k)$.}
	% for the fixed-point residual.
	By reformulating the ADMM as the Douglas-Rachford (DR) splitting method,  {which is a special case of the PPM \cite{eckstein1992douglas}}, Kim \cite{kim2021accelerated} obtained an accelerated ADMM {that possesses} a nonergodic $O(1/k)$ convergence rate concerning the primal feasibility violations. Since Contreras and Cominetti \cite{contreras2023optimal} established a close connection between Kim's accelerated PPM and Halpern's iteration \cite{halpern1967fixed,lieder2021convergence,sabach2017first}, it is natural to employ the Halpern iteration to obtain an accelerated ADMM, owing to the simplicity of the Halpern iteration. Following this line, Tran-Dinh and Luo \cite{tran2021halpern} {obtained} an accelerated ADMM by accelerating {the DR splitting method}, which achieves a nonergodic $O(1/k)$ convergence rate on the forward-backward residual operator associated with the dual problem \eqref{dual}. When $f_2(\cdot)$ is a strongly convex function, this result actually represents the $O(1/k)$ convergence rate in terms of primal feasibility violations. Furthermore, by using a more effective Peaceman-Rachford splitting method in general, Zhang et al. \cite{zhang2022efficient} proposed a Halpern-Peaceman-Rachford method, which exhibits a nonergodic convergence rate of $O(1/k)$ concerning both the KKT residual and the primal objective function value gap. {Recently, assuming {that} both $\cT_1$ and $\cT_2$ are positive definite, Yang et al. \cite{yang2023accelerated} reformulated the pADMM as a preconditioned PPM \cite{bonnans1995family,li2020asymptotically}. By applying the Halpern iteration to this preconditioned PPM, they proposed an accelerated pADMM with  {the} relaxation factor $\rho\in(0,2]$, which enjoys a nonergodic $O(1 / k)$ convergence rate for the fixed-point KKT residual.} While the accelerated pADMM variants mentioned above have demonstrated success in specific applications, it is worth emphasizing that these methods cannot cover the general case of pADMM with semi-proximal terms.
	
	{Compared to {the first approach of}  employing Nesterov's extrapolation to obtain an accelerated pADMM, the second approach mentioned above does not impose restrictive requirements on the step lengths. This {inspires}  us to further explore accelerating the pADMM with semi-proximal terms by focusing on the second approach}. More recently, Bredies et al. \cite{bredies2022degenerate} extended their earlier works \cite{bredies2015preconditioned, bredies2017proximal} by introducing the degenerate PPM (dPPM) with a positive semidefinite preconditioner. They regarded the Chambolle-Pock scheme \cite{chambolle2011first} under the condition of $\tau \sigma \|L\|^2=1$ (see formula (3.3) in \cite{bredies2022degenerate}) as a dPPM to discuss its convergence  \cite{condat2013primal}. Note that the Chambolle-Pock scheme under the condition of $\tau \sigma \|L\|^2\leq 1$ is {actually equivalent to LADMM}, and the convergence properties of LADMM, even with larger dual step lengths in the interval $(0,(1+\sqrt{5})/2)$, have already been covered in \cite{fazel2013hankel} {under a much more general setting of ADMM with semi-proximal terms}.  {So it is not of absolute necessity to study the dPPM if it is only used to analyze the convergence of the  Chambolle-Pock scheme \cite{chambolle2011first} with the condition $\tau \sigma \|L\|^2=1$.} However, the work of \cite{bredies2022degenerate} has motivated us to look at the pADMM from the perspective of dPPM {for extending the work of Yang et al. \cite{yang2023accelerated} where both  $\cT_1$ and $\cT_2$ are assumed to be positive definite.} Indeed, we establish an equivalence between the pADMM and the dPPM. Consequently, one may consider employing the Halpern iteration to accelerate the dPPM, thus obtaining an accelerated pADMM. On the other hand, it is worth noting that Contreras and Cominetti \cite{contreras2023optimal} demonstrated that the best possible convergence rate for general Mann iterations in normed spaces, including the Halpern iteration, is lower bounded by $O(1/k)$. In contrast, inspired by the second-order dynamical system with a vanishing damping term proposed in \cite{bot2023fastogda} for solving monotone equations, Bot and Nguyen \cite{bot2023fast} introduced the fast Krasnosel'ski\u{i}-Mann (KM) iteration with asymptotic $o(1/k)$ convergence rates for finding a fixed-point of the nonexpansive operator, which appears to offer better convergence rates than $O(1/k)$ in certain applications. Therefore, in this paper, we integrate the fast KM iteration and the Halpern iteration into the dPPM to accelerate it, aiming to achieve both asymptotic $o(1/k)$ and non-asymptotic $O(1/k)$ convergence rates. The main contributions of this paper can be highlighted as follows:
	\begin{enumerate}
		\item We propose a globally convergent accelerated dPPM by unifying the Halpern iteration and the fast KM iteration. This method exhibits both asymptotic $o(1/k)$ and non-asymptotic $O(1/k)$ in terms of the operator residual.
		\item We establish the equivalence between the pADMM and the dPPM. Utilizing the accelerated dPPM, we introduce a globally convergent accelerated pADMM, which enjoys both asymptotic $o(1/k)$ and non-asymptotic $O(1/k)$ nonergodic convergence rates concerning the KKT residual and the primal objective function value gap. In our accelerated pADMM, both $\mathcal{T}_1$ and $\mathcal{T}_2$ can be positive semidefinite under mild conditions. Additionally, the relaxation factor $\rho$ can be chosen in the interval $(0,2]$, where a larger value of $\rho$ generally leads to better performance.
		\item We implement the proposed accelerated pADMM and the pADMM to solve convex quadratic programming (QP) problems. Our {preliminary} numerical results exhibit the superiority of the proposed accelerated pADMM over the pADMM.
	\end{enumerate}
	
	The remaining parts of this paper are organized as follows. In Section \ref{Sec: acc dPPM}, we briefly introduce the dPPM and propose an accelerated dPPM. In Section \ref{Sec: acc-pADMM}, we first establish the equivalence between the pADMM and the dPPM, and then present an accelerated pADMM based on the accelerated dPPM. Section \ref{Sec: Numerical} provides some numerical results to show the superiority of the accelerated pADMM over the pADMM by using the convex {QP} problem as an illustrative example. Finally, we conclude the paper in Section \ref{Sec: conclusion}.
	
	\section{Acceleration of degenerate proximal point methods}\label{Sec: acc dPPM}
	In this section, we start by introducing the dPPM. Subsequently, we present an accelerated version of the dPPM.
	
	\subsection{The degenerate proximal point method}
	Let $\cH$ be a real Hilbert space with inner product $\langle\cdot, \cdot\rangle$ and its induced norm $\|\cdot\|$. A set-valued operator $\cT: \cH \rightarrow 2^{\cH}$ is said to be a monotone operator if it satisfies the following inequality
	$$
	\left\langle v-v^{\prime}, w-w^{\prime}\right\rangle \geq 0 \quad \text { whenever } \quad v \in \cT w, v^{\prime} \in \cT w^{\prime} .
	$$
	It is said to be maximal monotone if, in addition, the graph
	$$
	{\rm{gph}}(\cT)=\{(w, v) \in \cH \times \cH \mid v \in \cT w\}
	$$
	is not properly contained in the graph of any other monotone operator $\cT^{\prime}: \cH\rightarrow 2^{\cH}$. Consider the following monotone inclusion problem:
	\begin{equation}\label{pro:inclusion-0}
		\text{ find } w\in \cH \text{ such that } 0\in \cT w,
	\end{equation}
	where $\cT$ is a maximal monotone operator from $\cH$ into itself. If one introduces a preconditioner, namely, a linear, bounded, self-adjoint, and positive semidefinite operator $\mathcal{M}: \mathcal{H} \rightarrow \mathcal{H}$, then the PPM with the preconditioner $\mathcal{M}$ \cite{bredies2022degenerate} for solving \eqref{pro:inclusion-0} can be expressed as follows:
	\begin{equation}\label{alg:dPPM0}
		w^0 \in \mathcal{H}, \quad w^{k+1}=  \hat{\mathcal{T}} w^k:= (\mathcal{M}+\mathcal{T})^{-1} \mathcal{M} w^k.
	\end{equation}
	Proper choices of $\cM$ will allow for efficient {evaluations} of $\hcT$. To ensure the well-definedness of \eqref{alg:dPPM0}, we introduce the concept of an admissible preconditioner:
	\begin{definition}[admissible preconditioner \cite{bredies2022degenerate}] An admissible preconditioner for the operator $\cT: \cH \rightarrow 2^{\cH}$ is a linear, bounded, self-adjoint, and positive semidefinite operator $\cM: \cH\rightarrow\cH$ such that
		\begin{equation}\label{def:hcT}
			\hcT=(\cM+\cT)^{-1}\cM
		\end{equation}
		is single-valued and has full domain.
	\end{definition}
	Drawing from classical results in functional operator theory, Bredies et al. \cite{bredies2022degenerate} {provided the following fundamental decomposition result to characterize linear, bounded, self-adjoint, and positive semidefinite operators}.
	\begin{proposition}[Proposition 2.3 in \cite{bredies2022degenerate}]\label{prop:M-decomposition}
		Let $\cM: \cH \rightarrow \cH$ be a linear, bounded, self-adjoint, and positive semidefinite operator. Then, there exists a bounded and injective operator $\cC: \cU \rightarrow  \cH$ for some real Hilbert space $\cU$, such that $\cM=\mathcal{C C}^*$, where $\cC^{*}:  \cH\to \cU$ is the adjoint of $\cC$. Moreover, if $\cM$ has closed range, then $\mathcal{C}^*$ is onto.
	\end{proposition}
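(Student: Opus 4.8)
The plan is to build $\cC$ directly from the (unique) positive semidefinite square root of $\cM$, which exists by the continuous functional calculus (spectral theorem) for bounded self-adjoint operators. Write $\cM^{1/2}$ for this square root, so that $\cM^{1/2}$ is bounded, self-adjoint, positive semidefinite, and $\cM^{1/2}\cM^{1/2}=\cM$. The naive factorization $\cM=\cM^{1/2}(\cM^{1/2})^{*}$ already holds since $\cM^{1/2}$ is self-adjoint, but $\cM^{1/2}$ need not be injective, so the first task is to cut down its domain to repair injectivity.

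First I would set $\cU:=\overline{\operatorname{ran}(\cM^{1/2})}$, regarded as a Hilbert space in its own right with the inner product inherited from $\cH$. Using self-adjointness, $\cU=(\ker \cM^{1/2})^{\perp}$, and since $\cM$ is positive semidefinite one checks $\ker \cM=\ker \cM^{1/2}$ via $\cM x=0\iff\|\cM^{1/2}x\|^{2}=\langle\cM x,x\rangle=0$. I would then define $\cC:\cU\to\cH$ as the restriction of $\cM^{1/2}$ to $\cU$; it is bounded with $\|\cC\|\le\|\cM^{1/2}\|$, and injective because $\cC u=0$ forces $u\in\ker\cM^{1/2}=\cU^{\perp}$, whence $u\in\cU\cap\cU^{\perp}=\{0\}$.

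The central computation is to identify $\cC^{*}$ and verify $\cM=\cC\cC^{*}$. For $h\in\cH$ and $u\in\cU$ one has $\langle\cC^{*}h,u\rangle=\langle h,\cM^{1/2}u\rangle=\langle\cM^{1/2}h,u\rangle$; since $\cM^{1/2}h\in\operatorname{ran}(\cM^{1/2})\subseteq\cU$, the orthogonal projection onto $\cU$ leaves it fixed, so $\cC^{*}h=\cM^{1/2}h$ viewed as an element of $\cU$. Consequently $\cC\cC^{*}h=\cM^{1/2}(\cM^{1/2}h)=\cM h$, which is the desired factorization. This projection step---confirming that the range of $\cM^{1/2}$ already lives inside $\cU$ so that no correction term appears---is the place where the argument is most easily mishandled, and I regard it as the main obstacle.

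For the final claim, suppose $\cM$ has closed range. Then $\cM$ is bounded below on $(\ker\cM)^{\perp}$, i.e. $\|\cM x\|\ge c\|x\|$ there for some $c>0$, by the closed range theorem. Combining this with $\|\cM x\|=\|\cM^{1/2}(\cM^{1/2}x)\|\le\|\cM^{1/2}\|\,\|\cM^{1/2}x\|$ gives $\|\cM^{1/2}x\|\ge(c/\|\cM^{1/2}\|)\|x\|$ for $x\in(\ker\cM^{1/2})^{\perp}$, so $\cM^{1/2}$ is bounded below on the orthogonal complement of its kernel and hence has closed range. Therefore $\cU=\overline{\operatorname{ran}(\cM^{1/2})}=\operatorname{ran}(\cM^{1/2})=\operatorname{ran}(\cC^{*})$, which shows that $\cC^{*}$ is onto.
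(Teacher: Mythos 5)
Your proof is correct: the factorization through the positive square root $\cM^{1/2}$ restricted to $\cU=\overline{\operatorname{ran}(\cM^{1/2})}$, the identification $\cC^{*}h=\cM^{1/2}h$ (valid precisely because $\cM^{1/2}h$ already lies in $\cU$, so no projection correction appears), and the bounded-below argument for the closed-range claim are all sound, with the only degenerate case ($\cM=0$, where the division by $\|\cM^{1/2}\|$ is moot) being trivial. The paper itself gives no proof --- it imports the statement verbatim from Bredies et al. --- and your square-root construction is exactly the standard argument used in that reference, so you have simply supplied the details the paper omits.
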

	
	Let $\cM$ be an admissible preconditioner for the maximal monotone operator $\cT$. The dPPM for solving {the inclusion} problem \eqref{pro:inclusion-0} is expressed as follows:
	\begin{equation}\label{alg:dPPM}
		w^0 \in \cH, \quad w^{k+1}= (1-\rho_k) w^k + \rho_k \bw^k, {\quad \bw^k= {\hcT w^k= (\mathcal{M}+\mathcal{T})^{-1} \mathcal{M} w^k}, }
	\end{equation}
	where $\{\rho_k\}$ is a sequence in $[0,2]$. Here, $\cM$ is only required to be positive semidefinite, which is the reason we refer to it as a dPPM. Note that $\mathcal{M}$ is associated with a semi-inner-product $\langle v, w\rangle_{\mathcal{M}}:= \langle v, \mathcal{M} w\rangle$ for all $v$ and $w$ in $\mathcal{H}$, as well as a continuous seminorm $\|w\|_{\mathcal{M}}=$ $\sqrt{\langle w, w\rangle_{\mathcal{M}}}$. For notational convenience, define the following mappings:

	\begin{equation}\label{def:hcF}
		\hcQ:=\cI-\hcT \quad \text{ and }  \quad {\hcF_\rho:=(1-\rho)\cI+\rho \hcT, \quad \rho \in [0,2],}
	\end{equation}
	where $\cI$ is an identity operator on $\cH$. Clearly, if $0\in \cT w$, we have that $\hcT w=w$ and $\hcQ w=0$. Similar to \cite[Proposition 1]{rockafellar1976monotone}, we summarize some properties of $\hcT$, $\hcF_\rho$, and $\hcQ$ in the following proposition:
	\begin{proposition}\label{prop:M-nonexpansive} The following things hold:
		\begin{enumerate}
			\item[(a)]  $w=\hcT w+\hcQ w$ and $\cM\hcQ w\in \cT (\hcT w)$ for all $w\in \cH$;
			\item [(b)]  $\langle  \hcT w-\hcT w^{\prime}, \hcQ w-\hcQ w^{\prime} \rangle_{\cM} \geq 0 \text{ for all } w, w^{\prime} \in \cH; $
			\item [(c)] $\hcT$ is $\cM$-firmly nonexpansive, i.e.,
			$$\|\hcT w-\hcT w^{\prime}\|_{\cM}^2+\|\hcQ w-\hcQ w^{\prime}\|_{\cM}^2 \leq\|w-w^{\prime}\|_{\cM}^2, \quad \text{ for all } w, w^{\prime} \in \cH;$$
			\item [(d)] $\hcF_\rho$ is $\cM$-nonexpansive for $\rho\in (0,2]$, i.e.,
			$$\|\hcF_\rho w-\hcF_\rho w^{\prime}\|_{\cM}\leq\|w-w^{\prime}\|_{\cM}, \quad \text{ for all } w, w^{\prime} \in  \cH.$$
		\end{enumerate}
	\end{proposition}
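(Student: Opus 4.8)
The plan is to prove the four items in sequence, since each one feeds into the next. The cornerstone is item~(a), from which the monotonicity inequality in~(b), the firm nonexpansiveness in~(c), and the nonexpansiveness in~(d) all follow by elementary expansions of the $\cM$-seminorm. So I would invest the real care in~(a) and treat the rest as bookkeeping.

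First I would establish~(a). The identity $w=\hcT w+\hcQ w$ is immediate from the definition $\hcQ=\cI-\hcT$. For the inclusion, I would unpack the definition $\hcT w=(\cM+\cT)^{-1}\cM w$: setting $u:=\hcT w$, the admissibility of $\cM$ guarantees that $u$ is the well-defined element satisfying $\cM w\in(\cM+\cT)u=\cM u+\cT u$, and hence $\cM(w-u)\in\cT u$. Since $w-u=\hcQ w$ and $u=\hcT w$, this is exactly $\cM\hcQ w\in\cT(\hcT w)$. Next, for~(b), I would apply~(a) at both $w$ and $w'$ to obtain $\cM\hcQ w\in\cT(\hcT w)$ and $\cM\hcQ w'\in\cT(\hcT w')$, and then invoke the monotonicity of $\cT$ to get $\langle\cM\hcQ w-\cM\hcQ w',\,\hcT w-\hcT w'\rangle\ge 0$. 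Rewriting the left-hand side as an $\cM$-semi-inner product, using the self-adjointness of $\cM$, yields $\langle\hcT w-\hcT w',\,\hcQ w-\hcQ w'\rangle_\cM\ge 0$.

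Items~(c) and~(d) are then purely algebraic. For~(c), I would decompose $w-w'=(\hcT w-\hcT w')+(\hcQ w-\hcQ w')$ via~(a), expand $\|w-w'\|_\cM^2$, and discard the nonnegative cross term supplied by~(b). For~(d), I would write $\hcF_\rho=\cI-\rho\hcQ$, so that
\[
\|\hcF_\rho w-\hcF_\rho w'\|_\cM^2=\|w-w'\|_\cM^2-2\rho\langle w-w',\,\hcQ w-\hcQ w'\rangle_\cM+\rho^2\|\hcQ w-\hcQ w'\|_\cM^2;
\]
combining~(a) and~(b) to get the key inequality $\langle w-w',\,\hcQ w-\hcQ w'\rangle_\cM\ge\|\hcQ w-\hcQ w'\|_\cM^2$ then collapses the bound to $\|w-w'\|_\cM^2-\rho(2-\rho)\|\hcQ w-\hcQ w'\|_\cM^2$, which is at most $\|w-w'\|_\cM^2$ precisely because $\rho(2-\rho)\ge 0$ on $(0,2]$.

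The only delicate point---rather than a genuine obstacle---is item~(a): one must lean on the admissibility hypothesis to ensure that $\hcT$ is single-valued with full domain, so that the resolvent manipulation $\cM w\in(\cM+\cT)(\hcT w)$ is legitimate even though $\cM$ is merely positive semidefinite rather than positive definite. Everything downstream works directly with the seminorm $\|\cdot\|_\cM$, so the possible degeneracy of $\cM$ (its nontrivial kernel) never causes trouble; the inequalities in~(b)--(d) hold in the seminorm sense without requiring $\|\cdot\|_\cM$ to separate points.
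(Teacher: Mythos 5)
Your proposal is correct and follows essentially the same route as the paper, which simply cites Rockafellar's Proposition 1 and \cite[Exercise 4.13]{bauschke2017convex} and omits the details: (a) via the resolvent inclusion $\cM w\in(\cM+\cT)(\hcT w)$, (b) from monotonicity of $\cT$, (c) by expanding the $\cM$-seminorm and dropping the cross term, and (d) by the standard relaxation argument for $\cM$-firmly nonexpansive maps. The details you supply are exactly the ``straightforward calculations'' the paper alludes to, correctly adapted to the seminorm $\|\cdot\|_{\cM}$.
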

	\begin{proof}
		The proofs of {parts} (a)-(c) can be obtained through straightforward calculations in a way similar to the proof provided in \cite[Proposition 1]{rockafellar1976monotone}. {Part (d)  follows from part (c), which can be verified by referring to \cite[Exercise 4.13]{bauschke2017convex}, with \(\|\cdot\|\) being replaced by \(\|\cdot\|_{\cM}\).} We omit the details here.
	\end{proof}
	 {Note that when \(\cM\) is the identity operator, \(\cM\)-firmly nonexpansiveness and \(\cM\)-nonexpansiveness reduce to the firmly nonexpansiveness and nonexpansiveness, respectively.} Furthermore, the global convergence of the dPPM in \eqref{alg:dPPM} can be established under the assumption that {$(\cM+\cT)^{-1}$ is $L$-Lipschitz; that is, there exists a constant $L \geq 0$ such that for all $v_1, v_2$ in the domain of $(\cM+\cT)^{-1}$,
		\[
		\|(\cM+\cT)^{-1}v_1 - (\cM+\cT)^{-1}v_2\| \leq L \|v_1 - v_2\|.
		\]
		Bredies et al. \cite{bredies2022degenerate} showed that this Lipschitz assumption holds for many splitting algorithms, including the Douglas-Rachford splitting method \cite{lions1979splitting} and the Chambolle-Pock scheme \cite{chambolle2011first}, under mild conditions. Additional examples can be found in Section 3 of \cite{bredies2022degenerate}. Moreover, building on the techniques developed in \cite{fazel2013hankel} for analyzing the global convergence of ADMM with semi-proximal terms, we provide a practical criterion in Section 3 for verifying this Lipschitz condition for the pADMM.}

	\begin{theorem}[Corollary 2.10 in \cite{bredies2022degenerate}]\label{Th: convergence-dPPM}
		Let $\cT:\cH \rightarrow 2^{\cH}$ with $\mathcal{T}^{-1}(0) \neq \emptyset$ be a maximal monotone operator and let $\cM$ be an admissible preconditioner such that $(\cM + \cT )^{-1}$ is L-Lipschitz. Let $\{w^k\}$ be any sequence generated by the dPPM in \eqref{alg:dPPM}. If $0<\inf_k \rho_k\leq \sup_k \rho_k<2$, then $\{w^k\}$ converges weakly to a point in $\mathcal{T}^{-1}(0)$.
		% in  $w^{*}$  such that $0\in \cT(w^{*})$.
	\end{theorem}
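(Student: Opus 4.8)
The plan is to reduce the degenerate iteration \eqref{alg:dPPM}, which lives in $\cH$ equipped only with the seminorm $\|\cdot\|_{\cM}$, to a genuine (non-degenerate) Krasnosel'ski\u{i}--Mann iteration in the auxiliary Hilbert space $\cU$ supplied by the factorization $\cM=\cC\cC^*$ of Proposition~\ref{prop:M-decomposition}, to apply the classical convergence theory there, and then to lift the conclusion back to $\cH$ by exploiting the $L$-Lipschitz continuity of $(\cM+\cT)^{-1}$.

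First I would establish Fej\'er monotonicity in the seminorm. Fixing $w^*\in\cT^{-1}(0)$, one has $\hcT w^*=w^*$ and $\hcQ w^*=0$; writing the update as $w^{k+1}=w^k-\rho_k\hcQ w^k$ and expanding $\|w^{k+1}-w^*\|_{\cM}^2$, Proposition~\ref{prop:M-nonexpansive}(b)--(c) applied with $w'=w^*$ should yield
\[
\|w^{k+1}-w^*\|_{\cM}^2 \le \|w^k-w^*\|_{\cM}^2 - \rho_k(2-\rho_k)\,\|\hcQ w^k\|_{\cM}^2 .
\]
Since $0<\inf_k\rho_k\le\sup_k\rho_k<2$ forces $\rho_k(2-\rho_k)\ge\tau$ for some $\tau>0$, this shows that $\{\|w^k-w^*\|_{\cM}\}$ converges for every $w^*\in\cT^{-1}(0)$ and that $\sum_k\|\hcQ w^k\|_{\cM}^2<\infty$, hence $\|\hcQ w^k\|_{\cM}\to 0$. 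Setting $u^k:=\cC^*w^k$ and using $\cM w^k=\cC u^k$, I would rewrite $\bw^k=(\cM+\cT)^{-1}\cC u^k$ and apply $\cC^*$ to obtain the genuine relaxed iteration $u^{k+1}=(1-\rho_k)u^k+\rho_k\,\mathcal{S} u^k$ in $\cU$, with $\mathcal{S}:=\cC^*(\cM+\cT)^{-1}\cC$; since $\|w\|_{\cM}=\|\cC^*w\|$, the $\cM$-firm nonexpansiveness of $\hcT$ transfers to genuine firm nonexpansiveness of $\mathcal{S}$ on $\cU$, and $\cC^*w^*$ is a fixed point of $\mathcal{S}$, so the classical Krasnosel'ski\u{i}--Mann theorem gives that $\{u^k\}$ is bounded, $\|u^{k+1}-u^k\|\to0$, and $u^k\rightharpoonup u^\infty$ for a fixed point $u^\infty$ of $\mathcal{S}$.

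The main obstacle is the degeneracy of $\cM$: the seminorm controls only $\cC^*w^k$, so neither boundedness nor weak convergence of $\{w^k\}$ itself follows from the estimate above, and this is exactly where the $L$-Lipschitz hypothesis becomes indispensable. Since $\{\cC u^k\}$ is bounded, the bound $\|\bw^k-\bw^{k'}\|\le L\|\cC u^k-\cC u^{k'}\|$ makes $\{\bw^k\}$ bounded in $\cH$; combining the identity $\hcQ w^{k+1}=(1-\rho_k)\hcQ w^k+(\bw^k-\bw^{k+1})$ with $\sup_k|1-\rho_k|<1$ and $\|\bw^{k+1}-\bw^k\|\le L\|\cC\|\,\|u^{k+1}-u^k\|\to0$ yields, via a standard contraction recursion, $\|\hcQ w^k\|\to0$ in the true norm and boundedness of $\{w^k\}=\{\bw^k+\hcQ w^k\}$.

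Finally I would identify and uniquely pin down the weak limit. For any subsequence $w^{k_j}\rightharpoonup w^\infty$ one has $\bw^{k_j}=w^{k_j}-\hcQ w^{k_j}\rightharpoonup w^\infty$, while $\cM\hcQ w^{k_j}\to0$ strongly (because $\|\cM\hcQ w^k\|\le\|\cC\|\,\|\hcQ w^k\|_{\cM}\to0$) and $\cM\hcQ w^{k_j}\in\cT(\bw^{k_j})$ by Proposition~\ref{prop:M-nonexpansive}(a); the demiclosedness of the graph of the maximal monotone operator $\cT$ then gives $0\in\cT w^\infty$, i.e.\ $w^\infty\in\cT^{-1}(0)$. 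To obtain a single weak limit, an Opial argument in the seminorm---legitimate since $\{\|w^k-w^*\|_{\cM}\}$ converges for each $w^*\in\cT^{-1}(0)$---shows that any two weak cluster points $p,q$ satisfy $\|p-q\|_{\cM}=0$, that is $\cC^*p=\cC^*q$. Because every element of $\cT^{-1}(0)$ is a fixed point of $\hcT=(\cM+\cT)^{-1}\cC\cC^*$ and therefore depends on its argument only through $\cC^*$, the equality $\cC^*p=\cC^*q$ forces $p=\hcT p=\hcT q=q$, completing the proof that $w^k\rightharpoonup w^\infty\in\cT^{-1}(0)$.
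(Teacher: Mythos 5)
The paper offers no proof of this theorem: it is imported verbatim as Corollary 2.10 of \cite{bredies2022degenerate}, so there is nothing in-paper to compare your argument against line by line. That said, your proof is correct, and it is essentially a reconstruction of the cited argument using exactly the toolkit the paper itself assembles in Propositions \ref{prop:M-decomposition}--\ref{prop:tcT}: Fej\'er monotonicity in the $\cM$-seminorm via parts (b)--(c) of Proposition \ref{prop:M-nonexpansive}, passage to the genuinely firmly nonexpansive reduced operator $\tcT=\cC^*(\cM+\cT)^{-1}\cC$ on $\cU$, the $L$-Lipschitz hypothesis to transport boundedness and asymptotic regularity from the shadow sequence back to $\{w^k\}$ in the true norm (your recursion $\hcQ w^{k+1}=(1-\rho_k)\hcQ w^k+(\bw^k-\bw^{k+1})$ with $\sup_k|1-\rho_k|<1$ is the right device), weak--strong closedness of $\operatorname{gph}\cT$ to place cluster points in $\cT^{-1}(0)$, and the seminorm Opial argument combined with $p=\hcT p$, $q=\hcT q$, $\cC^*p=\cC^*q$ to force $p=q$. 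That last step is the one genuinely degenerate-specific idea, and you handle it correctly.

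Two minor points worth a sentence each if you write this up. First, Proposition \ref{prop:tcT} assumes $\cM$ has closed range, while the theorem does not; this is harmless because every iterate $u^k=\cC^*w^k$ and the reference point $\cC^*w^*$ lie in the range of $\cC^*$, on which the transfer of $\cM$-firm nonexpansiveness to $\tcT$ is immediate, but you should say so rather than invoke the proposition wholesale. Second, you do not actually need the Krasnosel'ski\u{i}--Mann convergence theorem in $\cU$: boundedness of $\{u^k\}$ and $\|u^{k+1}-u^k\|=\rho_k\|\hcQ w^k\|_{\cM}\to 0$ already follow from your Fej\'er inequality, and the weak limit of $\{u^k\}$ is never used downstream, so that invocation can be dropped without loss.
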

	Furthermore, Bredies et al. \cite{bredies2022degenerate} introduced a reduced preconditioned PPM for the parallel composition of $\cT$ by $\cC^*$, which can help us explore the acceleration of the dPPM in \eqref{alg:dPPM}. We summarize some key results related to parallel composition in the following proposition.
	
	\begin{proposition}\label{prop:tcT}
		Let $\cT:\cH \rightarrow 2^{\cH}$ be a maximal monotone operator and let $\cM$ be an admissible preconditioner with closed range. Suppose {that} $\cM = \cC \cC^{*}$ is a decomposition of $\cM$ according to Proposition \ref{prop:M-decomposition} with $\cC:\cU\rightarrow \cH$. The parallel composition $\mathcal{C}^* \triangleright \cT:=\left(\mathcal{C}^* \mathcal{T}^{-1} \mathcal{C}\right)^{-1}$ is a maximal monotone operator. Furthermore, the resolvent of $\mathcal{C}^* \triangleright \cT$ denoted by $\widetilde{\cT}:=\left(\cI+ \mathcal{C}^* \triangleright \cT\right)^{-1}$, has the following identity
		\begin{equation}\label{def:tcT}
			\widetilde{\cT}  =\mathcal{C}^*(\mathcal{M}+\cT)^{-1} \mathcal{C}.
		\end{equation}
		In particular, $\widetilde{\cT}:\cU\rightarrow\cU$ is everywhere well-defined and firmly nonexpansive. Moreover, for {any $\rho \in(0,2]$, $\tcF_\rho=(1-\rho)\cI+\rho \tcT$} is nonexpansive   and
		$$\mathcal{C}^*\cT^{-1}(0)=\mathcal{C}^* \operatorname{Fix} \hcT=\operatorname{Fix} \tcT=\operatorname{Fix} \tcF_\rho,
		$$
		where we denote the set of fixed-points of an operator $\hcT$ by $\operatorname{Fix} \hcT$.
	\end{proposition}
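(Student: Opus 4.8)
The plan is to establish the resolvent identity \eqref{def:tcT} directly by inclusion-chasing and then read off every other claim from it, rather than first proving maximal monotonicity of the parallel composition abstractly. The crucial structural input is that, since $\cM$ has closed range, Proposition \ref{prop:M-decomposition} guarantees that $\cC^*$ is surjective; this is what makes $\cC^*(\cM+\cT)^{-1}\cC$ everywhere defined. Indeed, for any $u\in\cU$ surjectivity gives some $v\in\cH$ with $\cC^* v = u$, whence $\cC u = \cC\cC^* v = \cM v$ and $(\cM+\cT)^{-1}\cC u = (\cM+\cT)^{-1}\cM v = \hcT v$, which is single-valued and well-defined by admissibility of $\cM$. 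Thus the right-hand side of \eqref{def:tcT} is a single-valued operator with full domain $\cU$.

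First I would verify \eqref{def:tcT}. Writing $\tilde u = \cC^*(\cM+\cT)^{-1}\cC u$ and $p = (\cM+\cT)^{-1}\cC u$, so that $\tilde u = \cC^* p$ and $\cC u - \cM p = \cC u - \cC\cC^* p \in \cT p$, I would check that $\tilde u \in \cC^*\cT^{-1}\cC(u-\tilde u)$: this holds because $\cC(u-\tilde u) = \cC u - \cM p$ gives $p \in \cT^{-1}\big(\cC(u-\tilde u)\big)$ and $\tilde u = \cC^* p$. Hence $u-\tilde u \in (\cC^*\cT^{-1}\cC)^{-1}\tilde u = (\cC^*\triangleright\cT)\tilde u$, so $u \in (\cI + \cC^*\triangleright\cT)\tilde u$, which together with single-valuedness is exactly $\tilde u = \tcT u$. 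Unfolding the definitions in the reverse direction yields the converse inclusion, establishing \eqref{def:tcT}.

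Next I would prove firm nonexpansiveness of $\tcT$ by a one-line monotonicity estimate, which simultaneously delivers maximal monotonicity of the parallel composition. Taking $p = (\cM+\cT)^{-1}\cC u$ and $p' = (\cM+\cT)^{-1}\cC u'$, monotonicity of $\cT$ applied to $\cC u - \cM p \in \cT p$ and $\cC u' - \cM p' \in \cT p'$ gives $\langle \cC(u-u') - \cM(p-p'),\, p-p'\rangle \ge 0$, i.e. $\langle u-u',\, \cC^*(p-p')\rangle \ge \|p-p'\|_{\cM}^2$. Since $\cC^*(p-p') = \tcT u - \tcT u'$ and $\|p-p'\|_{\cM}^2 = \|\cC^*(p-p')\|^2$, this reads $\langle u-u',\, \tcT u - \tcT u'\rangle \ge \|\tcT u - \tcT u'\|^2$, the defining inequality of firm nonexpansiveness. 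Because $\tcT = (\cI+\cC^*\triangleright\cT)^{-1}$ is then a full-domain firmly nonexpansive operator, the resolvent form of Minty's theorem shows $\cC^*\triangleright\cT$ is maximal monotone, and nonexpansiveness of $\tcF_\rho$ for $\rho\in(0,2]$ follows exactly as in Proposition \ref{prop:M-nonexpansive}(d) via \cite[Exercise 4.13]{bauschke2017convex}.

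Finally, the fixed-point chain is bookkeeping built on the same ingredients. The identity $\operatorname{Fix}\tcT = \operatorname{Fix}\tcF_\rho$ is immediate, since $\tcF_\rho u = u \iff \rho(\tcT u - u)=0 \iff \tcT u = u$ for $\rho>0$. For $\operatorname{Fix}\hcT = \cT^{-1}(0)$ I would note that $\hcT w = w$ is equivalent to $\cM w \in (\cM+\cT)w$, i.e. $0\in\cT w$, using single-valuedness from admissibility; this gives $\cC^*\cT^{-1}(0) = \cC^*\operatorname{Fix}\hcT$ for free. The remaining equality $\cC^*\operatorname{Fix}\hcT = \operatorname{Fix}\tcT$ uses the same chasing as for \eqref{def:tcT}: if $w\in\cT^{-1}(0)$ then $\tcT(\cC^* w) = \cC^*(\cM+\cT)^{-1}\cM w = \cC^* w$, while conversely if $\tcT u = u$ with $p = (\cM+\cT)^{-1}\cC u$, then $u = \cC^* p$ forces $\cC u = \cM p$, hence $0\in\cT p$ and $u = \cC^* p \in \cC^*\operatorname{Fix}\hcT$. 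I expect the only genuine obstacle to be the well-definedness and single-valuedness of $(\cM+\cT)^{-1}\cC$ on all of $\cU$, which is precisely where the closed-range hypothesis (surjectivity of $\cC^*$) is indispensable; once that is secured, every remaining claim reduces to monotonicity of $\cT$ and the factorization $\cM=\cC\cC^*$.
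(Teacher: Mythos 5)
Your proposal is correct. Every step checks out: surjectivity of $\cC^*$ (from the closed-range hypothesis via Proposition \ref{prop:M-decomposition}) is exactly what makes $(\cM+\cT)^{-1}\cC u=(\cM+\cT)^{-1}\cM v$ well-defined and single-valued by admissibility; the inclusion-chasing for \eqref{def:tcT} is sound in both directions (the converse direction correctly exploits that $(\cM+\cT)^{-1}(\cC u)$ is a singleton); the monotonicity estimate $\langle u-u',\tcT u-\tcT u'\rangle\ge\|\tcT u-\tcT u'\|^2$ uses only $\cM=\cC\cC^*$ and monotonicity of $\cT$; and the fixed-point chain is routine given the identity.

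The route is genuinely different from the paper's, though, in that the paper does not prove any of this: it simply cites Lemma 2.12, Theorem 2.13, and the proof of Theorem 2.14 of Bredies et al.\ for the maximal monotonicity of $\cC^*\triangleright\cT$, the identity \eqref{def:tcT}, the firm nonexpansiveness of $\tcT$, and the equality $\cC^*\operatorname{Fix}\hcT=\operatorname{Fix}\tcT$, adding only the trivial observations about $\tcF_\rho$ and $\cT^{-1}(0)=\operatorname{Fix}\hcT$. You reprove everything from scratch, and you also invert the logical order: rather than establishing maximal monotonicity of the parallel composition first and then deriving the resolvent, you establish the resolvent identity and firm nonexpansiveness directly and then recover maximal monotonicity from the Minty parametrization ($\tcT$ full-domain firmly nonexpansive implies $\tcT^{-1}-\cI=\cC^*\triangleright\cT$ maximal monotone). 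What your approach buys is a self-contained, two-page argument that makes visible exactly where the closed-range hypothesis enters (surjectivity of $\cC^*$, hence full domain of $\cC^*(\cM+\cT)^{-1}\cC$); what the paper's approach buys is brevity and deference to the source where these facts were first established.
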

	\begin{proof}
		See Lemma 2.12 and Theorem 2.13 in \cite{bredies2022degenerate} for the maximal monotonicity of $\mathcal{C}^* \triangleright \cT$, the identity \eqref{def:tcT}, and the firm nonexpansiveness of $\tcT$. Moreover, it follows from \cite[Exercise 4.13]{bauschke2017convex} that   {for any  $\rho \in (0,2]$,  $\tcF_\rho$  is nonexpansive}.  Finally, from the proof of Theorem 2.14 in \cite{bredies2022degenerate}, we obtain $\mathcal{C}^* \operatorname{Fix} \hcT=\operatorname{Fix} \tcT$. Combining this with the fact $\cT^{-1}(0)=\operatorname{Fix} \hcT$, we conclude the proof.
	\end{proof}
	
	\subsection{An accelerated degenerate proximal point method}
	In this subsection, we consider the acceleration of the dPPM with a fixed relaxation parameter $\rho\in (0,2]$. Specifically, the dPPM in \eqref{alg:dPPM} can be reformulated as
	\begin{equation}\label{alg:dPPM-F}
		w^0 \in \cH, \quad w^{k+1}=\hcF_\rho w^k,
	\end{equation}
	where $\hcF_\rho$ defined in \eqref{def:hcF} is $\cM$-nonexpansive for $\rho\in(0,2]$ according to Proposition \ref{prop:M-nonexpansive}. Leveraging the $\mathcal{M}$-nonexpansiveness of $\hcF_\rho$, we propose the following accelerated dPPM presented in Algorithm \ref{alg:acc-dPPM}.
	\begin{algorithm}[htb]
		\caption{An accelerated dPPM for solving the inclusion problem \eqref{pro:inclusion-0}}
		\label{alg:acc-dPPM}
		\begin{algorithmic}[1]
			\STATE {Input: Let $\hat{w}^{0}=w^{0} \in \cH$, $\alpha\geq 2$ and $\rho\in(0,2]$. For $k=0,1, \ldots,$ perform the following steps in each iteration. }
			\STATE {Step 1. $\bw^{k}=\hcT w^{k}$.}
			\STATE {Step 2. $\hw^{k+1}=\hcF_\rho w^k= (1-\rho){w}^{k}+ \rho \bw^{k}$.}
			\STATE {Step 3. $w^{k+1}=w^k +\frac{\alpha}{2(k+\alpha)} (\hat{w}^{k+1}-w^k)+\frac{k}{k+\alpha}\left(\hat{w}^{k+1}-\hat{w}^{k}\right)$.}
		\end{algorithmic}
	\end{algorithm}
	\begin{remark}\label{Remark:connection}
		The proposed accelerated dPPM can be regarded as applying either Halpern's iteration \cite{lieder2021convergence} or the fast KM iteration \cite{bot2023fast} to the dPPM in \eqref{alg:dPPM-F}, depending on the choice of $\alpha$. On the one hand, when $\alpha=2$, Step 3 in Algorithm \ref{alg:acc-dPPM} {at the $k$-th interation} is
		\[
		w^{k+1} =  w^k + \frac{1}{k+2} (\hat{w}^{k+1}-w^{k}) + \frac{k}{k+2} \left(\hat{w}^{k+1} - \hat{w}^{k}\right).
		\]
		% Multiplying {by $(k+2)$ on both  sides of the above relation and rearranging it}, we obtain
		{Multiplying both sides of the above relation by $(k+2)$} {and rearranging it}, we have
		\begin{equation}\label{equ:s}
			(k+2)w^{k+1} - (k+1)\hat{w}^{k+1} = (k+1)w^k - k\hat{w}^{k}.
		\end{equation}
		Define the sequence $\{s^k\}$ as follows:
		$$s^{k} := (k+1)w^{k} - k \hat{w}^{k},\quad \forall k\geq 0.$$
		From \eqref{equ:s}, we can obtain $s^{k+1} = s^0$ for all $k\geq 0$. {This  implies}
		\begin{equation}\label{alg:w-halpern}
			w^{k+1} = \frac{1}{k+{2}}w^0 + \frac{k+1}{k+2}\hat{w}^{k+1},\quad \forall k \geq 0,
		\end{equation}
		{which}  is exactly the Halpern iteration \cite{halpern1967fixed,lieder2021convergence} applied to the dPPM in \eqref{alg:dPPM-F}. On the other hand, when $\alpha>2$, the proposed accelerated dPPM is equivalent to the fast KM iteration applied to \eqref{alg:dPPM-F} with a unit step size, as described in formula (12) of \cite{bot2023fast}.
	\end{remark}
	
	Let $\cM = \cC \cC^{*}$ be a decomposition of $\cM$ according to Proposition \ref{prop:M-decomposition} with $\cC:\cU\rightarrow \cH$. To discuss the global convergence and convergence rate of the accelerated dPPM in Algorithm \ref{alg:acc-dPPM}, we introduce two shadow sequences $\{u^k\}$ and $\{\bu^k\}$ defined as follows:
	\begin{equation}\label{alg:u}
		u^{k}:=\cC^{*}w^{k} \text{ and } \bu^k:=\cC^{*}\bw^{k},\quad \forall k\geq 0,
	\end{equation}
	where the sequences $\{w^k\}$ and $\{\bw^k\}$ are generated by Algorithm \ref{alg:acc-dPPM}. This leads to the following identity:
	\begin{equation}\label{alg:u-acc}
		u^{k+1}= u^k+\frac{\alpha}{2(k+\alpha)} (\tcF_\rho u^{k}-u^k)+\frac{k}{k+\alpha}\left(\tcF_\rho u^{k}-\tcF_\rho u^{k-1}\right),\quad \forall k\geq 1,
	\end{equation}
	{where $\tcF_\rho=(1-\rho)\cI+\rho \tcT$.} %is defined in Proposition \ref{prop:tcT}.
	Based on the shadow sequences in \eqref{alg:u} and the equivalence outlined in Remark \ref{Remark:connection}, we establish the global convergence of the proposed accelerated dPPM in the following theorem.	
	\begin{theorem}\label{Th:convergence-Halpern-dPPM}
		Let $\cT:\cH \rightarrow 2^{\cH}$ with $\mathcal{T}^{-1}(0) \neq \emptyset$ be a maximal monotone operator, and let $\cM$ be an admissible preconditioner with a closed range such that $(\cM + \cT )^{-1}$ is continuous. Suppose {that} $\cM = \cC \cC^{*}$ is a decomposition of $\cM$ according to Proposition \ref{prop:M-decomposition} with $\cC:\cU\rightarrow \cH$. The following conclusions hold for the sequences $\{\bar{w}^k\}$, $\{\hat{w}^{k}\}$, and $\{w^{k}\}$ generated by the accelerated dPPM in Algorithm \ref{alg:acc-dPPM}:
		\begin{enumerate}
			\item[(a)] If $\alpha=2$, then the sequence $\{\bar{w}^k\}$ converges strongly to a fixed  point  $w^{*} =(\mathcal{M}+\cT)^{-1} \mathcal{C}\Pi_{\cC^{*}\cT^{-1}(0)}(\cC^{*}w^0)$ in ${\cT}^{-1}(0)$, where $\Pi_{\cC^{*}\cT^{-1}(0)}(\cdot)$ is the projection operator onto the closed convex set $\cC^{*}\cT^{-1}(0)$; {moreover,} if  $\rho\in(0,2)$, then the sequences $\{w^{k}\}$ and $\{\hat{w}^{k}\}$ also converge strongly to $w^{*}$;
			\item[(b)] If $\alpha>2$, then the sequence $\{\bar{w}^k\}$ converges weakly to a fixed-point in ${\cT}^{-1}(0)$.
		\end{enumerate}
	\end{theorem}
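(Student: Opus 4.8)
The plan is to transfer the entire analysis to the ``shadow'' space $\cU$ through $\cC^{*}$, where the degenerate seminorm $\|\cdot\|_{\cM}$ becomes the genuine norm $\|\cC^{*}\cdot\|$ and $\tcF_\rho$ is honestly nonexpansive by Proposition~\ref{prop:tcT}. First I would confirm the shadow recursion \eqref{alg:u-acc}: applying $\cC^{*}$ to Step~3 of Algorithm~\ref{alg:acc-dPPM} and using $\cC^{*}\hcT w^{k}=\cC^{*}(\cM+\cT)^{-1}\cC\cC^{*}w^{k}=\tcT u^{k}$, hence $\cC^{*}\hw^{k+1}=\tcF_\rho u^{k}$, shows that $\{u^{k}\}$ obeys exactly the accelerated iteration \eqref{alg:u-acc} driven by the \emph{nonexpansive} operator $\tcF_\rho$ on $\cU$. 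By the equivalence recorded in Remark~\ref{Remark:connection}, for $\alpha=2$ the sequence $\{u^{k}\}$ is the Halpern iteration $u^{k+1}=\frac{1}{k+2}u^{0}+\frac{k+1}{k+2}\tcF_\rho u^{k}$, while for $\alpha>2$ it is the fast KM iteration with unit step size.

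I would then simply invoke the known convergence theory for these two iterations in the Hilbert space $\cU$. For $\alpha=2$, the classical Halpern theory \cite{halpern1967fixed,lieder2021convergence} gives strong convergence $u^{k}\to u^{*}:=\Pi_{\operatorname{Fix}\tcF_\rho}(u^{0})$; since $\operatorname{Fix}\tcF_\rho=\cC^{*}\cT^{-1}(0)$ and $u^{0}=\cC^{*}w^{0}$, this identifies $u^{*}=\Pi_{\cC^{*}\cT^{-1}(0)}(\cC^{*}w^{0})$. For $\alpha>2$, the fast KM theory of Bot and Nguyen \cite{bot2023fast} gives weak convergence $u^{k}\rightharpoonup u^{*}\in\operatorname{Fix}\tcF_\rho$ together with $\|u^{k}-\tcF_\rho u^{k}\|\to0$. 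In both cases the shadow residual controls the genuine seminorm residual via $\|\hcQ w^{k}\|_{\cM}=\|\cC^{*}\hcQ w^{k}\|=\|u^{k}-\tcT u^{k}\|=\rho^{-1}\|u^{k}-\tcF_\rho u^{k}\|$.

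To obtain the $\cH$-valued conclusions I would lift these statements back, the key structural fact being that $\bw^{k}=\hcT w^{k}=(\cM+\cT)^{-1}\cC u^{k}$ depends on $w^{k}$ only through $u^{k}$. For $\alpha=2$, strong convergence $u^{k}\to u^{*}$ plus the assumed continuity of $(\cM+\cT)^{-1}$ and boundedness of $\cC$ yields $\bw^{k}\to w^{*}:=(\cM+\cT)^{-1}\cC u^{*}$ strongly; that $w^{*}\in\cT^{-1}(0)$ follows because $u^{*}=\cC^{*}\hat w$ for some $\hat w\in\cT^{-1}(0)$ gives $\cC u^{*}=\cM\hat w$, whence single-valuedness of the resolvent forces $w^{*}=\hcT\hat w=\hat w$. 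For the ``moreover'' part with $\rho\in(0,2)$, I would feed $\bw^{k}\to w^{*}$ into the Halpern update written as $w^{k+1}=\frac{1}{k+2}w^{0}+\frac{(k+1)(1-\rho)}{k+2}w^{k}+\frac{(k+1)\rho}{k+2}\bw^{k}$; setting $e^{k}=w^{k}-w^{*}$ and using that the three coefficients sum to one gives $\|e^{k+1}\|\le\beta_{k}\|e^{k}\|+\gamma_{k}$ with $\beta_{k}\to|1-\rho|<1$ and $\gamma_{k}\to0$, so a standard contraction lemma yields $w^{k}\to w^{*}$ and then $\hw^{k+1}=(1-\rho)w^{k}+\rho\bw^{k}\to w^{*}$. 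The condition $|1-\rho|<1$ is precisely why $\rho=2$ must be excluded here.

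The main obstacle is part~(b): lifting the \emph{weak} convergence of $u^{k}$ through the nonlinear generalized resolvent to get weak convergence of $\bw^{k}$ in $\cH$, since $(\cM+\cT)^{-1}$ need not be weakly sequentially continuous. I would handle this by a monotone-operator cluster-point argument. By Proposition~\ref{prop:M-nonexpansive}(a), $\xi^{k}:=\cM\hcQ w^{k}=\cC(u^{k}-\tcT u^{k})\in\cT\bw^{k}$ with $\xi^{k}\to0$ strongly. Any weak cluster point $p$ of $\{\bw^{k}\}$ then satisfies $(p,0)\in\operatorname{gph}\cT$ by demiclosedness of the maximal monotone graph in the weak$\,\times\,$strong topology, so $p\in\cT^{-1}(0)$; moreover $\cC^{*}\bw^{k}=\tcT u^{k}\rightharpoonup u^{*}$ forces $\cC^{*}p=u^{*}$, and as in part~(a) combining $p\in\cT^{-1}(0)$ with $\cC^{*}p=u^{*}$ pins down $p=(\cM+\cT)^{-1}\cC u^{*}$ uniquely, so all weak cluster points coincide and $\{\bw^{k}\}$ converges weakly to a point of $\cT^{-1}(0)$. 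The one technical point requiring care is the boundedness of $\{\bw^{k}\}$ needed to extract cluster points: its range component is controlled by $\cM\bw^{k}=\cC\tcT u^{k}$ via the closed-range assumption, while the kernel component is handled by the ambient (finite-dimensional) structure, in which continuity maps bounded sets to bounded sets.
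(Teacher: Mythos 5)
Your proposal is correct, and for part (a) it follows essentially the same route as the paper: pass to the shadow sequence $u^k=\cC^*w^k$, recognize the Halpern iteration driven by the nonexpansive $\tcF_\rho$ on $\cU$, invoke Wittmann-type strong convergence to $\Pi_{\operatorname{Fix}\tcT}(u^0)=\Pi_{\cC^*\cT^{-1}(0)}(\cC^*w^0)$, lift through the continuous map $(\cM+\cT)^{-1}\cC$ to get $\bw^k\to w^*$, and handle the ``moreover'' clause by the recursion $\|w^{k+1}-w^*\|\le \frac{(k+1)|1-\rho|}{k+2}\|w^k-w^*\|+\beta_k$ with $\beta_k\to 0$, which is exactly the paper's inequality \eqref{Th:convergence-w} resolved by the same standard lemma. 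Your identification $w^*=\hat w$ via single-valuedness of the resolvent is a slightly slicker way to see $w^*\in\cT^{-1}(0)$ than the paper's direct computation of $\hcT w^*$, but it is the same idea.

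The genuine difference is in part (b). The paper disposes of it in one line (``similar to \eqref{Th:convergence-barw}''), which implicitly asks the nonlinear map $(\cM+\cT)^{-1}\cC$ to carry the \emph{weak} convergence $u^k\rightharpoonup u^*$ to weak convergence of $\bw^k$; norm continuity alone does not do this in an infinite-dimensional $\cH$. Your demiclosedness argument --- $\cM\hcQ w^k=\cC(u^k-\tcT u^k)\in\cT\bw^k$ with $\cC(u^k-\tcT u^k)\to 0$ strongly, so every weak cluster point $p$ of $\{\bw^k\}$ lies in $\cT^{-1}(0)$, and $\cC^*p=u^*$ pins $p$ down uniquely as $(\cM+\cT)^{-1}\cC u^*$ --- is the standard and correct way to close this step (it is essentially what Bredies et al.\ do in their Theorem 2.14), and it makes your write-up more complete than the paper's at this point. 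The one spot where you are less careful than you could be is the boundedness of $\{\bw^k\}$ needed to extract weak cluster points: appealing to finite-dimensionality is not available under the theorem's stated hypotheses (general Hilbert $\cH$, $(\cM+\cT)^{-1}$ only continuous). Under the Lipschitz assumption used everywhere else in the paper one gets $\|\bw^k-w^*\|\le L\|\cC\|\,\|u^k-u^*\|$ with $\{u^k\}$ bounded, which settles it; as stated, with mere continuity, this boundedness is a hypothesis-level gap shared by the paper's own one-line argument rather than a defect specific to your proof.
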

	
	\begin{proof}
		We first establish the statement in part (a), where $\alpha=2$. According to the connection between the accelerated dPPM and the Halpern iteration outlined in \eqref{alg:w-halpern}, and the relationship between $\{u^k\}$ and $\{w^k\}$ in \eqref{alg:u}, we can obtain that
		\begin{equation}\label{Th:convergence-Halpern}
			u^{k+1}=\frac{1}{k+2} u^{0}+\frac{k+1}{k+2} \tcF_\rho u^k,\quad \forall k\geq 0.
		\end{equation}
		Note that $\tcF_\rho$ is nonexpansive for any $\rho \in(0,2]$  by Proposition \ref{prop:tcT}. It follows from the global convergence of the Halpern iteration in \cite[Theorem 2]{wittmann1992approximation} that
		\begin{equation}\label{Th:convergence-u}
			u^{k}\rightarrow \Pi_{\operatorname{Fix} \tcT}(u^{0}).
		\end{equation}
		By utilizing Proposition \ref{prop:tcT}, we have
		$$
		\mathcal{C}^*\mathcal{T}^{-1}(0)=\mathcal{C}^* \operatorname{Fix} \widehat{\mathcal{T}}=\operatorname{Fix} \tcT.
		$$
		Let $w^{*}=(\mathcal{M}+\mathcal{T})^{-1} \mathcal{C}\Pi_{\mathcal{C}^*\mathcal{T}^{-1}(0)}(u^{0})$. Consequently, by the relationship between $\{u^k\}$ and $\{w^k\}$ in \eqref{alg:u}, and \eqref{Th:convergence-u}, we can obtain
		\begin{equation}\label{Th:convergence-barw}
			\bar{w}^k=(\mathcal{M}+\mathcal{T})^{-1}\mathcal{C}\mathcal{C}^{*}w^{k}=(\mathcal{M}+\mathcal{T})^{-1}\mathcal{C} u^{k}\rightarrow (\mathcal{M}+\mathcal{T})^{-1} \mathcal{C}\Pi_{\mathcal{C}^*\mathcal{T}^{-1}(0)}(u^{0})=w^{*},
		\end{equation}
		where the continuity of $(\mathcal{M}+\mathcal{T})^{-1}\mathcal{C}$ is derived from the composition of a continuous function $(\mathcal{M}+\mathcal{T})^{-1}$ and a linear operator $\mathcal{C}$. Hence, $\{\bar{w}^k\}$ converges strongly to $w^*$. Furthermore, since $\tcT \Pi_{\mathcal{C}^*\mathcal{T}^{-1}(0)}(u^{0})=\Pi_{\mathcal{C}^*\mathcal{T}^{-1}(0)}(u^{0})$, we have
		$$
		\begin{array}{ll}
			\hcT w^*&=(\mathcal{M}+\mathcal{T})^{-1} \cC\cC^*(\mathcal{M}+\mathcal{T})^{-1} \mathcal{C}\Pi_{\mathcal{C}^*\mathcal{T}^{-1}(0)}(u^{0})\\
			&= (\mathcal{M}+\mathcal{T})^{-1} \cC \tcT  \Pi_{\mathcal{C}^*\mathcal{T}^{-1}(0)}(u^{0})  \\
			&=(\mathcal{M}+\mathcal{T})^{-1} \cC \Pi_{\mathcal{C}^*\mathcal{T}^{-1}(0)}(u^{0}) =w^*,
		\end{array}
		$$
		which implies that $w^{*} \in \operatorname{Fix} \widehat{\mathcal{T}}=\mathcal{T}^{-1}(0)$. It remains to show that $\{w^{k}\}$ and $\{\hat{w}^{k}\}$ also converge strongly to $w^{*}$ for $\rho \in (0,2)$. According to \eqref{alg:w-halpern}, we have
		\begin{equation}\label{Th:convergence-w}
			\begin{array}{ll}
				&\|w^{k+1}-w^{*}\| \\
				=& \displaystyle \|\frac{1}{k+2} (w^{0}-w^{*})+ \frac{k+1}{k+2}((1-\rho)(w^{k}-w^{*})+\rho (\bar{w}^k-w^{*}))\|  \\
				\displaystyle 	\leq& \displaystyle\frac{k+1}{k+2}|1-\rho|\|w^{k}-w^{*}\|+ \frac{1}{k+2} \| w^{0}-w^{*}\|+ \frac{k+1}{k+2}\rho \|\bar{w}^k-w^{*}\|.
			\end{array}
		\end{equation}
		For any $k\geq 0$, define $q_k:= \frac{k+1}{k+2}|1-\rho|$ and $\beta_k:=\frac{1}{k+2} \| w^{0}-w^{*}\|+ \frac{k+1}{k+2}\rho \|\bar{w}^k-w^{*}\|.$ {It is evident that} $0\leq q_k<1$, $\beta_k\geq 0$ for any $k\geq0$, and
		\begin{equation}\label{Th:convergence-q-1}
			\sum_{k=0}^{\infty}(1-q_k)= \sum_{k=0}^{\infty} 1- \left(\frac{k+1}{k+2}\right)|1-\rho|= \sum_{k=0}^{\infty} \frac{1}{k+2}+ \left(\frac{k+1}{k+2}\right)(1-|1-\rho|)=+\infty.
		\end{equation}
		Furthermore, due to the convergence of $\{\bar{w}^k\}$, we can deduce
		\begin{equation}\label{Th:convergence-q-2}
			\frac{\beta_k}{1-q_k}\rightarrow 0,\quad \forall \rho \in (0,2).
		\end{equation}
		It follows from \eqref{Th:convergence-q-1}, \eqref{Th:convergence-q-2}, and \cite[Lemma 3 in subsection 2.2.1]{polyak1987introduction} that
		\begin{equation*}
			\|w^{k+1}-w^{*}\|\rightarrow 0,
		\end{equation*}
		which in accordance with \eqref{alg:w-halpern} implies
		\begin{equation*}
			\|\hat{w}^{k}-w^{*}\|\rightarrow 0.
		\end{equation*}
		Hence, $\{w^{k}\}$ and $\{\hat{w}^{k}\}$ also converge strongly to $w^{*}$ for $\rho\in(0,2)$.
		\par
		Now, we shall prove part (b) where $\alpha>2$. According to the identity in \eqref{alg:u-acc}, the nonexpansiveness of $\tcF_\rho$, and the convergence result of the fast KM \cite[Theorem 3.4]{bot2023fast}, we can obtain that $\{u^k\}$ converges weakly to an element in $\operatorname{Fix} \tcT$. Similar to \eqref{Th:convergence-barw}, we can deduce that $\{\bar{w}^k\}$ converges weakly to a point in ${\cT}^{-1}(0)$.
	\end{proof}

	\begin{remark}
		The acceleration step (Step 3) in Algorithm \ref{alg:acc-dPPM} can be replaced by other acceleration techniques \cite{tran2023halpern}. Following a similar approach to the proof of Theorem \ref{Th:convergence-Halpern-dPPM}, we can obtain the global convergence of the sequence $\{\bw^k\}$ if the shadow sequence $\{u^k\}$ converges.
	\end{remark}
	
	The following proposition provides convergence rates for the accelerated dPPM.
	\begin{proposition}\label{prop:complexity-acc-dPPM}
		Let $\cT:\cH \rightarrow 2^{\cH}$ with $\mathcal{T}^{-1}(0) \neq \emptyset$ be a maximal monotone operator and let $\cM$ be an admissible preconditioner with closed range. The sequences $\{w^{k}\}$ and $\{\hat{w}^{k}\}$ generated by Algorithm \ref{alg:acc-dPPM} {satisfy  the following}:
		\begin{enumerate}
			\item[(a)] if $\alpha=2$, then
			\begin{equation}\label{prop:complexity-acc-dPPM-2}
				\|w^{k}-\hat{w}^{k+1}\|_{\cM} \leq \frac{2\left\|w^0-w^*\right\|_{\cM}}{k+1},\quad  \forall k \geq 0 \text { and } w^* \in \mathcal{T}^{-1}(0);
			\end{equation}
			\item[(b)] if $\alpha>2$, then
			\begin{equation}\label{prop:complexity-acc-dPPM>2}
				\|w^{k+1}-w^{k}\|_{\cM}=o\left(\frac{1}{k+1}\right)  \text { and }  \|w^{k}-\hat{w}^{k+1}\|_{\cM}=o\left(\frac{1}{k+1}\right) \text { as } k \rightarrow+\infty \text {.}
			\end{equation}
		\end{enumerate}
	\end{proposition}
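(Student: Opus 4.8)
The plan is to transport everything onto the shadow sequence $\{u^k\}$ of \eqref{alg:u}, exploiting the fact that the $\cM$-seminorm collapses to a genuine norm on $\cU$. First I would record the elementary identity $\|v\|_{\cM}^2=\langle v,\cC\cC^* v\rangle=\|\cC^* v\|^2$, valid for every $v\in\cH$, so that the $\cM$-seminorm of any difference of iterates equals the $\cU$-norm of the difference of their $\cC^*$-images. Next I would establish the intertwining relation $\cC^*\hcT=\tcT\cC^*$: indeed $\cC^*\hcT v=\cC^*(\cM+\cT)^{-1}\cM v=\cC^*(\cM+\cT)^{-1}\cC\cC^* v=\tcT\cC^* v$ by \eqref{def:tcT}. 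Composing with $\hcF_\rho=(1-\rho)\cI+\rho\hcT$ from \eqref{def:hcF} then gives $\cC^*\hcF_\rho v=\tcF_\rho\cC^* v$, so that $\cC^*\hw^{k+1}=\cC^*\hcF_\rho w^k=\tcF_\rho u^k$. Combining these facts yields the two key translations
\[
\|w^k-\hw^{k+1}\|_{\cM}=\|u^k-\tcF_\rho u^k\|,\qquad \|w^{k+1}-w^k\|_{\cM}=\|u^{k+1}-u^k\|.
\]
After this reduction both parts become statements purely about the nonexpansive operator $\tcF_\rho$ on the Hilbert space $\cU$, where $\tcF_\rho$ is nonexpansive by Proposition \ref{prop:tcT}.

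For part (a), I would invoke the Halpern representation \eqref{Th:convergence-Halpern} of the shadow sequence, namely $u^{k+1}=\frac{1}{k+2}u^0+\frac{k+1}{k+2}\tcF_\rho u^k$. Choosing $u^*:=\cC^* w^*$ for an arbitrary $w^*\in\cT^{-1}(0)$, Proposition \ref{prop:tcT} guarantees $u^*\in\cC^*\cT^{-1}(0)=\operatorname{Fix}\tcF_\rho$, while the seminorm identity gives $\|u^0-u^*\|=\|w^0-w^*\|_{\cM}$. The optimal residual bound for the Halpern iteration of a nonexpansive map \cite{lieder2021convergence} then yields $\|u^k-\tcF_\rho u^k\|\le 2\|u^0-u^*\|/(k+1)$, which is precisely \eqref{prop:complexity-acc-dPPM-2} once the translation above is applied.

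For part (b), I would use that the shadow sequence obeys the fast KM recursion \eqref{alg:u-acc} with $\alpha>2$. Since $\tcF_\rho$ is nonexpansive and $\operatorname{Fix}\tcF_\rho=\cC^*\cT^{-1}(0)\neq\emptyset$, the $o(1/(k+1))$ rates for the fast KM iteration \cite[Theorem 3.4]{bot2023fast} apply verbatim, delivering $\|u^{k+1}-u^k\|=o(1/(k+1))$ and $\|u^k-\tcF_\rho u^k\|=o(1/(k+1))$; the two translations then produce \eqref{prop:complexity-acc-dPPM>2}.

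I expect the only point demanding care—rather than genuine difficulty—to be the bookkeeping in the reduction step: verifying the intertwining $\cC^*\hcF_\rho=\tcF_\rho\cC^*$ and confirming that the anchoring point $u^*$ in part (a) is actually a fixed point of $\tcF_\rho$ with the matching initial seminorm. Once these shadow identities are secured, both convergence rates are inherited directly from the known non-degenerate Halpern and fast KM theory on $\cU$, so no independent estimation is required.
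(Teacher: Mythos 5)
Your proposal is correct and follows essentially the same route as the paper: both reduce the claim to the shadow sequence $u^k=\cC^*w^k$, use $\|\cdot\|_{\cM}=\|\cC^*\cdot\|$ together with $\cC^*\hcF_\rho=\tcF_\rho\cC^*$ and $\operatorname{Fix}\tcF_\rho=\cC^*\cT^{-1}(0)$, and then import the Halpern rate of Lieder for $\alpha=2$ and the fast KM rates of Bo\c{t}--Nguyen for $\alpha>2$. Your explicit intertwining identity and the choice $u^*=\cC^*w^*$ merely make precise the translation step that the paper leaves implicit (the paper passes from $u^*$ to $w^*=(\cM+\cT)^{-1}\cC u^*$, whereas your direction is the one literally needed for the ``for all $w^*\in\cT^{-1}(0)$'' quantifier), so no substantive difference remains.
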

	
	\begin{proof}
		Let $\cM = \cC \cC^{*}$ be a decomposition of $\cM$ according to Proposition \ref{prop:M-decomposition} with $\cC:\cU\rightarrow\cH$. If $\alpha=2$, then the shadow sequence $\{u^k\}$ satisfying \eqref{Th:convergence-Halpern} is exactly the Halpern iteration. By Proposition \ref{prop:tcT}, we know that $\tcF_\rho$ is nonexpansive for $\rho \in(0,2]$. It follows from \cite[Theorem 2.1]{lieder2021convergence} that
		\begin{equation}\label{prop:complexity-acc-dPPM-1}
			\|u^{k}-\tcF_\rho u^k\| \leq \frac{2\left\|u^0-u^*\right\|}{k+1}, \quad \forall k \geq 0 \text { and } u^* \in \operatorname{Fix}\tcF_\rho.
		\end{equation}
		Due to Proposition \ref{prop:tcT}, we also have $\operatorname{Fix}\tcF_\rho=\cC^{*}\cT^{-1}(0)$. Thus, for any $u^* \in \operatorname{Fix}\tcF_\rho$, there exists a point $w^{*}=(\mathcal{M}+\cT)^{-1} \mathcal{C}u^{*} \in \cT^{-1}(0) $ such that $\cC^{*}w^{*}=u^{*}$. Hence, \eqref{prop:complexity-acc-dPPM-1} can be rewritten as:
		\begin{equation*}
			\|\cC^{*}w^{k}-\cC^{*} \hcF_\rho w^k\| \leq \frac{2\left\|\cC^{*}w^{0}-\cC^{*}w^{*}\right\|}{k+1}, \quad \forall k \geq 0 \text { and } w^* \in \cT^{-1}(0),
		\end{equation*}
		which implies
		\begin{equation*}
			\|w^{k}-\hat{w}^{k+1}\|_{\cM} =\|w^{k}-\hcF_\rho w^k\|_{\cM} \leq \frac{2\left\|w^{0}-w^{*}\right\|_{\cM}}{k+1}, \quad \forall k \geq 0 \text { and } w^* \in \cT^{-1}(0).
		\end{equation*}
		
		If $\alpha>2$, then the shadow sequence $\{u^k\}$ satisfying \eqref{alg:u-acc} is exactly the fast KM iteration. It follows from \cite[Theorem 3.5]{bot2023fast} that
		\begin{equation*}
			\|u^{k+1}-u^{k}\|=o\left(\frac{1}{k+1}\right) \quad \text { and } \quad\|u^{k}-\tcF_\rho u^{k}\|=o\left(\frac{1}{k+1}\right) \text { as } k \rightarrow+\infty \text {. }
		\end{equation*}
		Similar to the previous case {with} $\alpha=2$, we can conclude that
		\begin{equation*}
			\|w^{k+1}-w^{k}\|_{\cM}=o\left(\frac{1}{k+1}\right) \quad \text { and } \quad\|w^{k}-\hat{w}^{k+1}\|_{\cM}=o\left(\frac{1}{k+1}\right) \text { as } k \rightarrow+\infty \text {, }
		\end{equation*}
		which completes the proof.
	\end{proof}
	\begin{remark}
		The convergence rates discussed in Proposition \ref{prop:complexity-acc-dPPM} are essentially inherited from the results of the Halpern iteration \cite{lieder2021convergence} and the fast KM iteration \cite{bot2023fast} applied to the nonexpansive operator $\tcF_\rho$ defined in Proposition \ref{prop:tcT}. Moreover, without acceleration, similar to \cite[Proposition 8]{brezis1978produits}, we can obtain that the dPPM with $\rho=1$ has an $O(1/\sqrt{k})$ convergence rate with respect to $\|w^k-w^{k-1}\|_{\cM},\ k\geq 1$.
	\end{remark}
	The following corollary demonstrates that even when the proximal term $\cM$ is positive semidefinite, the accelerated dPPM can achieve an $O(1/k)$ convergence rate in terms of the operator residual under the norm.
	\begin{corollary}\label{coro:complexity-HPPPA}
		Let $\cT:\cH \rightarrow 2^{\cH}$ with $\mathcal{T}^{-1}(0) \neq \emptyset$ be a maximal monotone operator and let $\cM$ be an admissible preconditioner with closed range such that $(\cM + \cT )^{-1}$ is L-Lipschitz. Suppose {that} $\cM = \cC \cC^{*}$ is a decomposition of $\cM$ according to Proposition \ref{prop:M-decomposition} with $\cC:\cU\rightarrow\cH$.   {Let  $\|\cC\| := \sup_{\|w\| \leq 1}\|\cC w\|$ represent  the norm of the linear operator $\cC$.}  Choose $\alpha=2$ and $\rho=1$.  Then the sequences $\{w^{k}\}$ and $\{\bar{w}^{k}\}$ generated by Algorithm \ref{alg:acc-dPPM} {satisfiy}
		$$
		\|w^{k}-\bar{w}^{k}\|\leq \frac{1}{k+1}\|w^{0}-w^*\| +\frac{(5k+1)L\|\mathcal{C}\|}{(k+1)^2}  \|w^{0}-w^{*}\|_{\cM}
		,\ \forall k\geq 0 \text{ and } w^* \in \mathcal{T}^{-1}(0).
		$$
	\end{corollary}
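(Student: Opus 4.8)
The plan is to pass to the shadow sequence $u^k:=\cC^* w^k$ and exploit that, with $\alpha=2$ and $\rho=1$, Algorithm \ref{alg:acc-dPPM} is exactly the Halpern iteration, while the \emph{genuine} norm of any difference pulled back through $\cC^*$ equals an $\cM$-seminorm. Since $\hcF_1=\hcT$, Step 3 gives $\hw^{k+1}=\bw^k$, and \eqref{Th:convergence-Halpern} reads $u^{k+1}=\frac{1}{k+2}u^0+\frac{k+1}{k+2}\tcT u^k$ with $\tcT=\tcF_1$ nonexpansive. I will repeatedly use the isometry $\|\cC^* v\|=\|v\|_{\cM}$, which follows from $\cM=\cC\cC^*$; in particular, for $w^*\in\cT^{-1}(0)$ the point $u^*:=\cC^* w^*$ lies in $\operatorname{Fix}\tcT$ by Proposition \ref{prop:tcT} and satisfies $(\cM+\cT)^{-1}\cC u^*=\hcT w^*=w^*$, while $\|u^0-u^*\|=\|w^0-w^*\|_{\cM}$.

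First I would record two scalar estimates for the Halpern shadow sequence. A straightforward induction on the recursion gives boundedness, $\|u^k-u^*\|\le\|u^0-u^*\|$ for all $k$. For the successive differences I would write $u^{k+1}-u^k=\frac{1}{k+2}(u^0-u^k)-\frac{k+1}{k+2}(u^k-\tcT u^k)$, bound $\|u^0-u^k\|\le 2\|u^0-u^*\|$ via the triangle inequality and the previous estimate, and invoke the Halpern residual rate $\|u^k-\tcT u^k\|\le 2\|u^0-u^*\|/(k+1)$ already established in \eqref{prop:complexity-acc-dPPM-1}; this yields $\|u^{k+1}-u^k\|\le 4\|u^0-u^*\|/(k+2)$, that is, $\|u^k-u^{k-1}\|\le 4\|u^0-u^*\|/(k+1)$ for $k\ge 1$.

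Next, for $k\ge 1$ I would decompose the target residual using the Halpern representation $w^k=\frac{1}{k+1}w^0+\frac{k}{k+1}\bw^{k-1}$. Inserting $w^*$, whose coefficient cancels, yields the exact identity $w^k-\bw^k=\frac{1}{k+1}(w^0-w^*)+\frac{k}{k+1}(\bw^{k-1}-\bw^k)-\frac{1}{k+1}(\bw^k-w^*)$. Since $\bw^k=(\cM+\cT)^{-1}\cC u^k$ and $w^*=(\cM+\cT)^{-1}\cC u^*$, the $L$-Lipschitz continuity of $(\cM+\cT)^{-1}$ together with $\|\cC(u-u')\|\le\|\cC\|\,\|u-u'\|$ converts the last two terms into $L\|\cC\|\,\|u^{k-1}-u^k\|$ and $L\|\cC\|\,\|u^k-u^*\|$. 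Substituting the two shadow estimates, simplifying $\frac{k}{k+1}\cdot\frac{4}{k+1}+\frac{1}{k+1}=\frac{5k+1}{(k+1)^2}$, and using $\|u^0-u^*\|=\|w^0-w^*\|_{\cM}$ gives the claim. The case $k=0$ is handled directly from $w^0-\bw^0=(w^0-w^*)-(\bw^0-w^*)$, where the same Lipschitz-plus-isometry argument gives $\|w^0-\bw^0\|\le\|w^0-w^*\|+L\|\cC\|\,\|w^0-w^*\|_{\cM}$, matching the formula at $k=0$.

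The main obstacle is conceptual rather than computational: the firm-nonexpansiveness machinery only controls the degenerate $\cM$-seminorm $\|w^k-\bw^k\|_{\cM}$ (Proposition \ref{prop:complexity-acc-dPPM}), which cannot by itself bound the true norm. The crux is therefore to factor $\bw^k$ through $(\cM+\cT)^{-1}\cC u^k$, so that the $L$-Lipschitz hypothesis on $(\cM+\cT)^{-1}$ can be combined with the operator norm $\|\cC\|$ to lift the shadow-space (equivalently, $\cM$-seminorm) estimates back to the full norm; arranging the bookkeeping so that the two error contributions coalesce into the exact coefficient $(5k+1)/(k+1)^2$ is the only delicate point.
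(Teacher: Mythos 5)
Your argument is correct and follows essentially the same route as the paper's proof: the same Halpern decomposition of $w^{k}-\bw^{k}$ into three terms, the same induction giving boundedness, the same use of Lieder's residual rate to control the successive-difference term, and the same lifting of seminorm estimates to the genuine norm via the $L$-Lipschitz continuity of $(\cM+\cT)^{-1}$ composed with $\cC$. The only cosmetic difference is that you phrase the intermediate estimates in the shadow space $\cU$ (where $\|\cC^{*}v\|=\|v\|_{\cM}$) while the paper keeps them in $\cH$ with the $\cM$-seminorm; these are isometric reformulations of one another, and the coefficients $(5k+1)/(k+1)^2$ agree.
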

	\begin{proof}
		Set $\alpha=2$ and $\rho=1$. Then according to \eqref{alg:w-halpern}, we have
		\begin{equation}\label{complexity-HPPPA-1}
			w^{k+1}= \frac{1}{k+2}w^{0}+\frac{k+1}{k+2}\hcT w^{k}, \quad \forall k\geq 0.
		\end{equation}
		Since $(\mathcal{M}+\cT)^{-1}$ is $L$-Lipschitz, we have for all $w^{\prime}, w^{\prime \prime} \in \cH$,
		\begin{equation*}\label{complexity-HPPPA-2}
			\|\hcT w^{\prime}-\hcT w^{\prime \prime}\|=\left\|(\mathcal{M}+\mathcal{T})^{-1} \mathcal{C C}^* w^{\prime}-(\mathcal{M}+\mathcal{T})^{-1} \mathcal{C} \mathcal{C}^* w^{\prime \prime}\right\| \leq L\|\mathcal{C}\|\left\|w^{\prime}-w^{\prime \prime}\right\|_{\mathcal{M}},
		\end{equation*}
		which,  {together with \eqref{complexity-HPPPA-1}},  yields that for any $w^{*}\in \mathcal{T}^{-1}(0)$ and $k\geq 0$,
		\begin{equation}\label{complexity-HPPPA-3}
			\begin{array}{ll}
				&\|w^{k+1}-\hcT w^{k+1}\|\\
				=& \|\frac{1}{k+2}[(w^{0}-w^{*})+ (w^{*}-\hcT w^{k+1})]+\frac{k+1}{k+2}(\hcT w^{k}-\hcT w^{k+1})\|\\
				\leq& \frac{1}{k+2}\left(\|w^{0}-w^*\| + \|w^{*}-\hcT w^{k+1}\|\right)+\frac{k+1}{k+2}\|\hcT w^{k}-\hcT w^{k+1}\|\\
				\leq& \frac{1}{k+2}\left(\|w^{0}-w^*\| + L\|\mathcal{C}\|\|w^{*}-w^{k+1}\|_{\cM}\right)+ \frac{k+1}{k+2}L\|\mathcal{C}\|\left\|w^{k}- w^{k+1}\right\|_{\cM}.
			\end{array}
		\end{equation}
		Next, we shall estimate $\|w^{k}- w^{k+1}\|_{\cM}$. Indeed, from \eqref{complexity-HPPPA-1}, we have
		\begin{equation}\label{complexity-HPPPA-4}
			\begin{array}{ll}
				&\|w^{k}- w^{k+1}\|_{\cM}\\
				=& \|\frac{1}{k+2}[(w^{0}-w^{*})+ (w^{*}-w^{k})] +\frac{k+1}{k+2}(\hcT w^{k}-  w^{k})\|_{\cM}\\
				\leq&  \frac{1}{k+2}\left(\|w^{0}-w^{*}\|_{\cM}+ \|w^{*}-w^{k}\|_{\cM}\right)  +\frac{k+1}{k+2}\|\hcT w^{k}-  w^{k}\|_{\cM}.\\
			\end{array}
		\end{equation}

		Now, we claim that $\| w^{k}  -w^{*}\|_{\mathcal{M}}\leq \| w^{0}  -w^{*}\|_{\mathcal{M}}$ for any $k\geq 0$. Assume $\| w^{k}  -w^{*}\|_{\mathcal{M}}\leq \| w^{0}  -w^{*}\|_{\mathcal{M}}$ for some $k\geq 0$ ({this is true for $k=0$ trivially}). Since $\hcT$ is $\cM$-nonexpansive by Proposition \ref{prop:M-nonexpansive}, we have
		\begin{equation*}
			\begin{aligned}
				\| w^{k+1}  -w^{*}\|_{\mathcal{M}} &=\|\frac{1}{k+2}\left(w^0-w^{*}\right)+\frac{k+1}{k+2} (\hcT w^k-w^{*}) \|_{\mathcal{M}} \\
				&\leq \frac{1}{k+2}\|w^{0}-w^{*}\|_{\cM}+\frac{k+1}{k+2}\| w^{k}-w^{*}\|_{\cM}\\
				&\leq \| w^{0}-w^{*}\|_{\cM}.
			\end{aligned}
		\end{equation*}
		{In particular, this  implies that $\| w^{1}  -w^{*}\|_{\mathcal{M}}\leq \|w^{0}-w^{*}\|_{\cM}$.}
		{Thus, we derive from the  induction that}
		\begin{equation}\label{complexity-HPPPA-5}
			\| w^{k}  -w^{*}\|_{\mathcal{M}}\leq \|w^{0}-w^{*}\|_{\cM},\quad \forall k \geq 0.
		\end{equation}
		Thus, from \eqref{complexity-HPPPA-3}, \eqref{complexity-HPPPA-4}, and \eqref{complexity-HPPPA-5}, we can deduce that for any $k\geq 0$,
		\begin{equation}\label{complexity-HPPPA-6}
			\begin{array}{ll}
				\|w^{k+1}-\hcT w^{k+1}\|\leq &\frac{1}{k+2}\left(\|w^{0}-w^*\| + L\|\mathcal{C}\|\|w^{*}-w^{0}\|_{\cM}\right) \\
				&\quad +\frac{k+1}{k+2}L\|\mathcal{C}\|\left(\frac{2}{k+2}\|w^{0}-w^{*}\|_{\cM}  +\frac{k+1}{k+2}\|\hcT w^{k}-  w^{k}\|_{\cM}\right).
			\end{array}
		\end{equation}
		Also, from Proposition \ref{prop:complexity-acc-dPPM}, we have for $\rho=1$,
		\begin{equation*}
			\|w^{k}-\hcT w^k\|_{\cM} \leq \frac{2\left\|w^0-w^*\right\|_{\cM}}{k+1},\ \forall k \geq 0 \text { and } w^* \in \mathcal{T}^{-1}(0),
		\end{equation*}
		which together with  \eqref{complexity-HPPPA-6} yields that for any $k\geq 0$,
		\begin{equation*}
			\begin{array}{ll}
				\|w^{k+1}-\hcT w^{k+1}\|&\leq  \frac{1}{k+2}\|w^{0}-w^*\|  +\frac{5k+6}{(k+2)^2}L\|\mathcal{C}\|\|w^{0}-w^{*}\|_{\cM}.
			\end{array}
		\end{equation*}
		Note that for $k=0$, we also have
		\begin{equation*}
			\begin{array}{ll}
				\|w^{0}-\hcT w^{0}\|&\leq \|w^{0}-w^*\|  +\| \hcT w^{0}-w^{*} \|\\
				&\leq \|w^{0}-w^*\|  + L\|\mathcal{C}\|\|w^{0}-w^{*}\|_{\cM}.
			\end{array}
		\end{equation*}
		Hence, for any $k\geq 0$,
		\begin{equation*}
			\|w^{k}-\hcT w^{k}\|\leq \frac{1}{k+1}\|w^{0}-w^*\|  +\frac{5k+1}{(k+1)^2}L\|\mathcal{C}\|\|w^{0}-w^{*}\|_{\cM},
		\end{equation*}
		which completes the proof.
	\end{proof}
	
	\section{Acceleration of the pADMM} \label{Sec: acc-pADMM}
	In this section, we first establish the equivalence between the pADMM and the dPPM. Subsequently, we introduce an accelerated pADMM based on the accelerated dPPM. Finally, we present the global convergence of the accelerated pADMM and discuss its convergence rate.
	
	\subsection{The equivalence between the pADMM and the dPPM}
	We first introduce some quantities for further analysis. Specifically, we consider the following self-adjoint linear operator $\cM: \mathbb{W} \rightarrow \mathbb{W} $ defined by
	\begin{equation}\label{def:M}
		\cM=\left[\begin{array}{ccc}
			\sigma B_{1}^{*}B_{1}+\cT_1 & 0 & B_{1}^{*} \\
			0 &\cT_2 & 0 \\
			B_{1} & 0 & \sigma^{-1} \cI
		\end{array}\right].
	\end{equation}
	Furthermore, since $f_{1}$ and $f_{2}$ are proper closed convex functions, there exist two self-adjoint and positive semidefinite operators $\Sigma_{f_{1}}$ and $\Sigma_{f_{2}}$ such that for all $y, \hat{y} \in \operatorname{dom}(f_{1}), \phi \in \partial f_{1}(y)$, and $\hat{\phi} \in \partial f_{1}(\hat{y})$,
	$$f_{1}(y) \geq f_{1}(\hat{y})+\langle\hat{\phi}, y-\hat{y}\rangle+\frac{1}{2}\|y-\hat{y}\|_{\Sigma_{f_{1}}}^{2} \text{ and } \langle \phi-\hat{\phi}, y-\hat{y}\rangle \geq\|y-\hat{y}\|_{\Sigma_{f_{1}}}^{2},$$
	and for all $z, \hat{z} \in \operatorname{dom}(f_{2}), \varphi \in \partial f_{2}(z)$, and $\hat{\varphi} \in \partial f_{2}(\hat{z})$,
	$$\quad f_{2}(z) \geq f_{2}(\hat{z})+\langle\hat{\varphi}, z-\hat{z}\rangle+\frac{1}{2}\|z-\hat{z}\|_{\Sigma_{f_{2}}}^{2} \text{ and } \langle \varphi-\hat{\varphi}, z-\hat{z}\rangle \geq\|z-\hat{z}\|_{\Sigma_{f_{2}}}^{2}.$$
	On the other hand, it follows from \cite[Corollary 28.3.1]{rockafellar1970convex} that $({y}^*, {z}^*) \in$ $\mathbb{Y} \times \mathbb{Z}$ is an optimal solution to problem \eqref{primal} if and only if there exists ${x}^* \in \mathbb{X}$ such that $({y}^*, {z}^*,{x}^*)$ satisfies the following KKT system:
	\begin{equation}\label{eq:KKT}
		\quad -B_1^* {x}^* \in \partial f_{1}({y}^*), \quad -B_{2}^* {x}^* \in \partial f_2({z}^*), \quad B_{1} {y}^*+B_{2} {z}^*-c=0,
	\end{equation}
	where $\partial f_1$ and $\partial f_2$ are the subdifferential mappings of $f_1$ and $f_2$, respectively. Then, solving problem \eqref{primal} is equivalent to finding $w\in \mathbb{W}$ such that $0\in \cT w$, where the maximal monotone operator $\cT$ is defined by
	\begin{equation}\label{def:T}
		\cT w=\left(\begin{array}{c}
			\partial f_{1}({y}) +B_1^* {x}  \\
			\partial f_{2}({z}) +B_2^* {x}  \\
			c-B_1{y}-B_2{z} \\
		\end{array}     \right),  \quad  \forall w=(y,z,x)\in \mathbb{W}:=.
	\end{equation}
	Now, we make the following assumptions:
	\begin{assumption}\label{ass: CQ}
		The KKT system \eqref{eq:KKT} has a nonempty solution set.
	\end{assumption}
	\begin{assumption}
		\label{ass: Assump-solvability}
		{Both $\Sigma_{f_1} +   B_1^*B_1+\cT_1$ and $\Sigma_{f_2} +  B_2^*B_2+\cT_2$ are positive definite.}
	\end{assumption}
	Under Assumption \ref{ass: Assump-solvability} originated from \cite{fazel2013hankel}, each step of the pADMM is well-defined due to the strong convexity of the objective functions in the subproblems. Assumption \ref{ass: Assump-solvability} holds automatically if either $B_i$ is injective or $f_i$ is strongly convex (for $\left.i=1,2\right)$. Han et al. \cite{han2018linear} presented an example where neither $B_i$ is injective nor $f_i$ is strongly convex (for $i=1,2$), yet Assumption \ref{ass: Assump-solvability} can still hold {easily}. Furthermore, the following lemma shows that $(\cM + \cT )^{-1}$ is Lipschitz continuous under Assumption \ref{ass: Assump-solvability}.
	\begin{lemma}\label{lem:inveritble}
		Suppose that Assumption \ref{ass: Assump-solvability} holds. Consider {the operators} $\cT$ defined in \eqref{def:T} and $\cM$ defined in \eqref{def:M}. Then, $(\cM + \cT )^{-1}$ is Lipschitz continuous.
	\end{lemma}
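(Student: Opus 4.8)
The plan is to show that the inclusion $(\cM+\cT)w\ni r$ can be solved for $w=(y,z,x)$ in a block-triangular fashion, reducing it to two strongly monotone inclusions whose inverses are single-valued and Lipschitz. Writing $r=(a,b,d)\in\mathbb{W}$ and using the explicit forms of $\cM$ in \eqref{def:M} and $\cT$ in \eqref{def:T}, the three rows of $(\cM+\cT)w\ni r$ read
\[
\partial f_1(y)+(\sigma B_1^* B_1+\cT_1)y+2B_1^* x\ni a,\quad \partial f_2(z)+\cT_2 z+B_2^* x\ni b,\quad \sigma^{-1}x-B_2 z = d-c,
\]
where the term $B_1 y$ has cancelled in the third row. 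The key structural observation is that this system is triangular in the order $z\to x\to y$: the third row determines $x$ from $z$, the second row (after eliminating $x$) determines $z$ alone, and the first row (after substituting $x$) determines $y$ alone.

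First I would use the third row to express $x=\sigma(d-c+B_2 z)$, an affine and hence Lipschitz function of $z$ and $d$. Substituting this into the second row yields the reduced inclusion $\partial f_2(z)+(\cT_2+\sigma B_2^*B_2)z\ni b-\sigma B_2^*(d-c)$ in the single variable $z$. Using the generalized convexity of $f_2$ encoded by $\Sigma_{f_2}$ together with the positive semidefiniteness of the linear part, this operator is maximal monotone and strongly monotone with modulus $\mu_2:=\lambda_{\min}(\Sigma_{f_2}+\sigma B_2^*B_2+\cT_2)$; Assumption \ref{ass: Assump-solvability} guarantees $\mu_2>0$, since $\sigma>0$ does not alter the common kernel of $\Sigma_{f_2}$, $B_2^*B_2$, and $\cT_2$. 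Consequently its inverse is single-valued with full domain and $1/\mu_2$-Lipschitz, so $z$ is uniquely determined and Lipschitz in $(b,d)$, and then $x$ is Lipschitz in $(z,d)$, hence in $(b,d)$.

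Next I would substitute the now-determined $x$ into the first row to obtain the reduced inclusion $\partial f_1(y)+(\sigma B_1^*B_1+\cT_1)y\ni a-2B_1^* x$ in the single variable $y$. By the same reasoning this operator is strongly monotone with modulus $\mu_1:=\lambda_{\min}(\Sigma_{f_1}+\sigma B_1^*B_1+\cT_1)>0$, so $y$ is uniquely determined and $1/\mu_1$-Lipschitz in $(a,x)$. Composing the three Lipschitz maps $z=z(b,d)$, $x=x(z,d)$, and $y=y(a,x)$ then shows that $(\cM+\cT)^{-1}$ is single-valued, everywhere defined, and Lipschitz continuous, with an explicit constant obtained by tracking $\mu_1$, $\mu_2$, $\sigma$, $\|B_1\|$, and $\|B_2\|$ through the chain.

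The main obstacle is the reduction step itself: one must verify that eliminating $x$ genuinely decouples the system, which hinges on the cancellation of $B_1 y$ in the third row and the absence of $y$ in the second row, and that the reduced inclusions for $z$ and $y$ inherit true strong monotonicity. The coefficient $2B_1^* x$ in the first row is harmless, since once $x$ is fixed it merely shifts the right-hand side. The remaining technical points are to confirm that Assumption \ref{ass: Assump-solvability}, stated with $B_i^* B_i$, still yields positive definiteness after inserting the factor $\sigma$, and to invoke the standard fact that a strongly monotone maximal monotone operator has a single-valued, full-domain, Lipschitz inverse.
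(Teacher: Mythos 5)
Your proposal is correct and takes essentially the same route as the paper's proof: both exploit the triangular structure (eliminate $x$ via the third row, reduce to the strongly monotone operators $\partial f_2+\sigma B_2^*B_2+\cT_2$ and $\partial f_1+\sigma B_1^*B_1+\cT_1$ whose inverses are Lipschitz by Assumption~\ref{ass: Assump-solvability} and \cite[Proposition 12.54]{rockafellar1998variational}, then chain the Lipschitz estimates for $z$, $x$, $y$). The only difference is cosmetic: the paper first proves single-valuedness separately by contradiction and then the Lipschitz bound, whereas you obtain single-valuedness, full domain, and Lipschitz continuity in one pass from strong monotonicity, which is a slight streamlining rather than a different argument.
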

	\begin{proof}
		We begin by establishing that $(\cM+\cT)^{-1}$ is single-valued    {by proof of contradiction.} Assume, for the sake of contradiction, that $(\cM+\cT)^{-1}$ is not single-valued. Then, there exist distinct $\bw_1=(\by_1,\bz_1,\bx_1)\in \mathbb{W}$ and $\bw_2=(\by_2,\bz_2,\bx_2)\in \mathbb{W}$ such that $\bw_1\in (\cM+\cT)^{-1}v$ and $\bw_2\in (\cM+\cT)^{-1}v$ for some $v=(v_y,v_z,v_x)\in \mathbb{W}$, which implies, for $i=1,2$,
		\begin{eqnarray}
			&&v_y\in \partial f_1(\by_i)+(\sigma B_{1}^{*}B_{1}+\cT_1)\by_i+2B_{1}^{*}\bx_{i},\label{lem:inveritble-1}\\
			&&v_z\in \partial f_2(\bz_i)+B_{2}^{*}\bx_{i}+\cT_{2}\bz_{i},\label{lem:inveritble-2}\\
			&&v_x = c-B_{2}\bz_i +\sigma^{-1}\bx_i.\label{lem:inveritble-3}
		\end{eqnarray}
		Using the definitions of $\Sigma_{f_{1}}$ and $\Sigma_{f_{2}}$, we derive the following inequalities from equations \eqref{lem:inveritble-1}, \eqref{lem:inveritble-2}, and \eqref{lem:inveritble-3}:
		\begin{eqnarray}
			&&\langle -(\sigma B_{1}^{*}B_{1}+\cT_1)(\by_1-\by_2)-2B_{1}^{*}(\bx_1-\bx_2)   , \by_1-\by_2 \rangle\geq \|\by_1-\by_2\|_{\Sigma_{f_1}}^2,\label{lem:inveritble-4}\\
			&&\langle -\cT_2(\bz_1-\bz_2)-B_{2}^{*}(\bx_1-\bx_2), \bz_1-\bz_2 \rangle\geq \|\bz_1-\bz_2\|_{\Sigma_{f_2}}^2,\label{lem:inveritble-5}\\
			&&\sigma B_{2}(\bz_1-\bz_2)=\bx_1-\bx_2.\label{lem:inveritble-6}
		\end{eqnarray}
		Substituting \eqref{lem:inveritble-6} into \eqref{lem:inveritble-5}, we have
		$$
		0\geq \|\bz_1-\bz_2\|_{\Sigma_{f_2}+\cT_2+\sigma B_2^{*}B_2}^2.
		$$
		As $\Sigma_{f_2}+\cT_2+\sigma B_2^{*}B_2$ is positive definite by Assumption \ref{ass: Assump-solvability}, we conclude that
		\begin{equation}\label{lem:inveritble-7}
			\bz_1-\bz_2=0,
		\end{equation}
		which together with \eqref{lem:inveritble-6} yields that
		\begin{equation}\label{lem:inveritble-8}
			\bx_1-\bx_2=0.
		\end{equation}
		Similarly, substituting \eqref{lem:inveritble-8} into \eqref{lem:inveritble-4}, we can deduce from the positive definiteness of $\Sigma_{f_1}+\cT_1+\sigma B_1^{*}B_1$ by Assumption \ref{ass: Assump-solvability} that
		\begin{equation}\label{lem:inveritble-9}
			\by_1-\by_2=0.
		\end{equation}
		Therefore, equations \eqref{lem:inveritble-7}, \eqref{lem:inveritble-8}, and \eqref{lem:inveritble-9} show that $\bw_1=\bw_2$, contradicting our assumption. Thus, we conclude that $(\cM+\cT)^{-1}$ is indeed single-valued.

		\par To show the Lipschitz continuity of $(\cM+\cT)^{-1}$,
		consider $\bw_i=(\by_i,\bz_i,\bx_i)\in \mathbb{W}$ such that $\bw_i= (\cM+\cT)^{-1}v_i$, where $v_i=(v_{iy},v_{iz},v_{ix})$ for $i=1,2$. Similar to \eqref{lem:inveritble-1}, \eqref{lem:inveritble-2}, and \eqref{lem:inveritble-3}, we have for $i=1,2,$
		\begin{eqnarray}
			&&v_{iy}-2B_{1}^{*}\bx_{i}\in (\partial f_1+\sigma B_{1}^{*}B_{1}+\cT_1)\by_i,\\
			&&v_{iz}-\sigma B_{2}^{*}(v_{ix}-c)\in (\partial f_2+\sigma B_2^{*}B_2+\cT_2)\bz _i ,\\
			&&v_{ix} = c-B_{2}\bz_i +\sigma^{-1}\bx_i   \label{lem:inveritble-10}.
		\end{eqnarray}
		By \cite[Corollary 23.5.1]{rockafellar1970convex}, we can obtain for $i=1,2,$
		\begin{eqnarray*}
			&&\by_i\in (\partial f_1+\sigma B_{1}^{*}B_{1}+\cT_1)^{-1}(v_{iy}-2B_{1}^{*}\bx_{i}),\\
			&&\bz _i\in (\partial f_2+\sigma B_2^{*}B_2+\cT_2)^{-1}(v_{iz}-\sigma B_{2}^{*}(v_{ix}-c)).
		\end{eqnarray*}
		Since $\partial f_2+\sigma B_{2}^{*}B_{2}+\cT_2$ is strongly monotone by Assumption \ref{ass: Assump-solvability},  $(\partial f_2+\sigma B_{2}^{*}B_{2}+\cT_2)^{-1}$ is Lipschitz continuous by \cite[Proposition 12.54]{rockafellar1998variational}. Therefore, there exists a constant $L_2$ such  that
		\begin{eqnarray}\label{lem:inveritble-11}
			\begin{array}{ll}
				\|\bz _1-\bz_2\|&\leq L_2\|(v_{1z}-v_{2z})-\sigma B_{2}^{*}(v_{1x}-v_{2x})\|\\
				&\leq L_2\|v_{1z}-v_{2z}\|+\sigma L_2\|B_{2}^{*}\|\| v_{1x}-v_{2x}\|.
			\end{array}
		\end{eqnarray}
		Hence, by \eqref{lem:inveritble-10} and \eqref{lem:inveritble-11}, we can obtain
		\begin{equation}\label{lem:inveritble-12}
			\begin{array}{ll}
				\| \bx_1-\bx_2\|&=\|\sigma (v_{1x}-v_{2x}+B_{2}(\bz_1-\bz_2))\| \\
				&\leq \sigma(1+\sigma L_2\|B_2\|\|B_2^{*}\|)\|v_{1x}-v_{2x}\|+\sigma L_2\|B_2\|\|v_{1z}-v_{2z}\|.
			\end{array}
		\end{equation}
		Similarly, since $\partial f_1+\sigma B_{1}^{*}B_{1}+\cT_1$ is strongly monotone by Assumption \ref{ass: Assump-solvability}, there exists a constant $L_1$ such  that
		\begin{equation}\label{lem:inveritble-13}
			\begin{array}{ll}
				\|\by_1-\by_2\|&\leq L_1\|(v_{1y}-v_{2y})-2B_{1}^{*}(\bx_{1}-\bx_2)\|\\
				&\leq  L_1(\|v_{1y}-v_{2y}\|+2\|B_{1}^{*}\|\|\bx_{1}-\bx_2\|)\\
				&\leq L_1(\|v_{1y}-v_{2y}\|+2\|B_{1}^{*}\|\sigma(1+\sigma L_2\|B_2\|\|B_2^{*}\|)\|v_{1x}-v_{2x}\| \\
				& \quad +2\|B_{1}^{*}\|\sigma L_2\|B_2\|\|v_{1z}-v_{2z}\|).
			\end{array}
		\end{equation}
		Therefore, by \eqref{lem:inveritble-11}, \eqref{lem:inveritble-12}, and \eqref{lem:inveritble-13}, there exists a constant $L$ such that $\|\bw_1-\bw_2\|\leq L\|v_1-v_2\|$.
	\end{proof}
	
	{Inspired by the interpretation of pADMM as a (partial) PPM in \cite{chen2015inertial} (for $\rho=1$) and \cite{yang2023accelerated} (for $\rho \in \mathbb{R}$), below we rigorously establish an important one-to-one correspondence between pADMM and dPPM.
	}
	\begin{proposition}\label{prop:equ-PADMM-dPPM}
		Suppose that Assumption \ref{ass: Assump-solvability} holds. Consider {the operators} $\cT$ defined in \eqref{def:T} and $\cM$ defined in \eqref{def:M}. Then the sequence $\left\{w^k\right\}$ generated by the pADMM in Algorithm \ref{alg:pADMM} coincides with the sequence $\left\{w^k\right\}$ generated by the dPPM in \eqref{alg:dPPM} with the same initial point $ w^0 \in \mathbb{W}$. Additionally, $\cM$ is an admissible preconditioner such that $(\cM + \cT )^{-1}$ is Lipschitz continuous.
	\end{proposition}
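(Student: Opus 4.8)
The plan is to show that a single pADMM sweep (Steps 1--3 of Algorithm \ref{alg:pADMM}) evaluates the degenerate resolvent $\hcT w^k=(\cM+\cT)^{-1}\cM w^k$, so that Step 4 becomes exactly the relaxation in \eqref{alg:dPPM}. The coincidence of the two sequences then follows by induction from the common initial point $w^0$. First I would record the structural properties of $\cM$ in \eqref{def:M}: it is bounded (finite dimensions), self-adjoint (the block pattern is symmetric with mutually adjoint off-diagonal blocks $B_1^*$ and $B_1$), and positive semidefinite, the last point by completing the square, namely $\langle w,\cM w\rangle=\|\sqrt{\sigma}\,B_1 y+\sigma^{-1/2}x\|^2+\|y\|_{\cT_1}^2+\|z\|_{\cT_2}^2\ge 0$ for every $w=(y,z,x)\in\mathbb{W}$.

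The core of the argument is to verify the inclusion $\cM(w^k-\bw^k)\in\cT\bw^k$ componentwise, where $\bw^k=(\by^k,\bz^k,\bx^k)$ is produced by Steps 1--3. For the middle ($z$) component, the optimality condition of Step 1 reads $0\in\partial f_2(\bz^k)+B_2^*x^k+\sigma B_2^*(B_1 y^k+B_2\bz^k-c)+\cT_2(\bz^k-z^k)$; substituting $\bx^k=x^k+\sigma(B_1 y^k+B_2\bz^k-c)$ from Step 2 collapses the two middle terms into $B_2^*\bx^k$ and yields $\cT_2(z^k-\bz^k)\in\partial f_2(\bz^k)+B_2^*\bx^k$, which is precisely the middle row of $\cM(w^k-\bw^k)\in\cT\bw^k$. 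For the first ($y$) component, the optimality condition of Step 3 is $0\in\partial f_1(\by^k)+B_1^*\bx^k+\sigma B_1^*(B_1\by^k+B_2\bz^k-c)+\cT_1(\by^k-y^k)$; using the identity $B_1^*(x^k-\bx^k)=-\sigma B_1^*(B_1 y^k+B_2\bz^k-c)$ from Step 2, the first row of $\cM(w^k-\bw^k)$, namely $(\sigma B_1^*B_1+\cT_1)(y^k-\by^k)+B_1^*(x^k-\bx^k)$, simplifies to $\cT_1(y^k-\by^k)-\sigma B_1^*(B_1\by^k+B_2\bz^k-c)$, which lies in $\partial f_1(\by^k)+B_1^*\bx^k$. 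The third ($x$) component is an exact equality: Step 2 gives $\sigma^{-1}(x^k-\bx^k)=-(B_1 y^k+B_2\bz^k-c)$, so the third row of $\cM(w^k-\bw^k)$, namely $B_1(y^k-\by^k)+\sigma^{-1}(x^k-\bx^k)$, equals $c-B_1\by^k-B_2\bz^k$, matching the third row of $\cT\bw^k$.

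The main bookkeeping obstacle is exactly these two invocations of Step 2: the cross term $B_1^*x$ and the diagonal block $\sigma B_1^*B_1$ in $\cM$ must cancel precisely against the augmented-Lagrangian gradients, so care is needed to keep track of which variable carries $y^k$ versus $\by^k$. Once the three rows are assembled, they combine to $\cM(w^k-\bw^k)\in\cT\bw^k$, equivalently $\cM w^k\in(\cM+\cT)\bw^k$. Since Lemma \ref{lem:inveritble} guarantees that $(\cM+\cT)^{-1}$ is single-valued, this forces $\bw^k=(\cM+\cT)^{-1}\cM w^k=\hcT w^k$, so $\bw^k$ is exactly the dPPM point; Step 4, $w^{k+1}=(1-\rho_k)w^k+\rho_k\bw^k$, then coincides verbatim with the relaxation in \eqref{alg:dPPM}, and induction on $k$ completes the coincidence of the two sequences.

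It remains to confirm the admissibility of $\cM$ and the Lipschitz claim. The operator $\hcT$ is single-valued by Lemma \ref{lem:inveritble}, and it has full domain because the construction above applies to an arbitrary $w\in\mathbb{W}$ in place of $w^k$: under Assumption \ref{ass: Assump-solvability} each subproblem in Steps 1 and 3 has a unique minimizer, producing a well-defined $\bw=\hcT w$ for every $w$. Combined with the self-adjointness, boundedness, and positive semidefiniteness of $\cM$ established at the outset, this shows that $\cM$ is an admissible preconditioner, while the Lipschitz continuity of $(\cM+\cT)^{-1}$ is precisely the conclusion of Lemma \ref{lem:inveritble}.
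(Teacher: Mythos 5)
Your proposal is correct and follows essentially the same route as the paper: the paper's proof establishes the relation $\cM w^k \in (\cM+\cT)\bw^k$ (citing Appendix B of the reference by Yang et al.\ for the componentwise verification you carry out explicitly), then invokes Lemma \ref{lem:inveritble} for single-valuedness and Lipschitz continuity, and uses the well-definedness of the subproblems under Assumption \ref{ass: Assump-solvability} to conclude that $\hcT$ has full domain. Your explicit cancellation of the cross terms via Step 2 and the componentwise matching against \eqref{def:M} and \eqref{def:T} are exactly the computations being delegated, and they check out.
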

	\begin{proof}
		Similar to the proof establishing the equivalence of the pADMM and the (partial) PPM, as outlined in Appendix B of \cite{yang2023accelerated}, we can obtain that under Assumption \ref{ass: Assump-solvability}, the sequence $\left\{w^k\right\}$ generated by Algorithm \ref{alg:pADMM} coincides with the sequence $\left\{w^k\right\}$ generated by the following scheme:
		\begin{equation*}
			\begin{array}{l}
				\quad \cM w^k  \in (\cM+\cT) \bw^k, \quad w^{k+1}= (1-\rho_k) w^k + \rho_k \bw^k.
			\end{array}
		\end{equation*}
		Since $(\cM+\cT)^{-1}$ is single-valued according to Lemma \ref{lem:inveritble}, we have
		\begin{equation*}
			\begin{array}{l}
				\quad \bw^k=(\cM+\cT)^{-1}\cM w^k, \quad w^{k+1}= (1-\rho_k) w^k + \rho_k \bw^k.
			\end{array}
		\end{equation*}
		Note that for {an arbitrary choice} of $w^k\in \mathbb{W}$, each step in Algorithm \ref{alg:pADMM} is well-defined under Assumption \ref{ass: Assump-solvability}. Hence, based on the equivalence established, we conclude that $(\cM+\cT)^{-1}\cM$ has full domain. Combining this result with Lemma \ref{lem:inveritble}, we deduce that $\cM$ is an admissible preconditioner such that $(\cM + \cT )^{-1}$ is Lipschitz continuous, which completes the proof.
	\end{proof}
	
	The convergence of Algorithm \ref{alg:pADMM} with the uniform relaxation factor $\rho \in (0,2)$ has already been established in \cite{xiao2018generalized} under Assumptions \ref{ass: CQ} and \ref{ass: Assump-solvability} {by borrowing the proofs from \cite{fazel2013hankel}.}
	%through the construction of a Lyapunov energy function.
	In contrast, by leveraging the equivalence between the dPPM and the pADMM, as detailed in Proposition \ref{prop:equ-PADMM-dPPM}, we can directly deduce the convergence of Algorithm \ref{alg:pADMM} with varying relaxation factors $\rho_k \in (0,2)$ for $k\geq 0$ by applying the convergence results of the dPPM, as presented in Theorem \ref{Th: convergence-dPPM}.
	\begin{corollary}
		Suppose that Assumptions \ref{ass: CQ} and \ref{ass: Assump-solvability} hold. If $0<\inf_k \rho_k\leq \sup_k \rho_k<2$, then the sequence $\{w^k\}=\{(y^k,z^k,x^k)\}$ generated by Algorithm \ref{alg:pADMM} converges to the point $w^{*}=(y^*,z^*,x^*)$, where $(y^{*},z^{*})$ is a solution to problem \eqref{primal} and $x^{*}$ is a solution to problem \eqref{dual}.
	\end{corollary}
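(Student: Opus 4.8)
The plan is to obtain this corollary as a direct consequence of the equivalence established in Proposition~\ref{prop:equ-PADMM-dPPM} together with the abstract convergence result for the dPPM in Theorem~\ref{Th: convergence-dPPM}. First I would invoke Proposition~\ref{prop:equ-PADMM-dPPM}: under Assumption~\ref{ass: Assump-solvability}, the sequence $\{w^k\}$ produced by Algorithm~\ref{alg:pADMM} is identical to the sequence generated by the dPPM in \eqref{alg:dPPM} (which already permits the relaxation factors $\rho_k$ to vary with $k$) starting from the same $w^0$, and moreover $\cM$ defined in \eqref{def:M} is an admissible preconditioner for the operator $\cT$ defined in \eqref{def:T} with $(\cM+\cT)^{-1}$ Lipschitz continuous. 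This reduces the convergence question for Algorithm~\ref{alg:pADMM} to a convergence question for a dPPM iteration.

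Next I would check that all the hypotheses of Theorem~\ref{Th: convergence-dPPM} are satisfied. The operator $\cT$ in \eqref{def:T} is maximal monotone, as recorded after its definition. Assumption~\ref{ass: CQ} guarantees that the KKT system \eqref{eq:KKT} has a solution, and since the inclusion $0\in\cT w$ is exactly the KKT system, this yields $\cT^{-1}(0)\neq\emptyset$. The preconditioner $\cM$ is admissible with $(\cM+\cT)^{-1}$ Lipschitz by the previous step, and the hypothesis $0<\inf_k\rho_k\leq\sup_k\rho_k<2$ is precisely the stated condition on the relaxation factors. Theorem~\ref{Th: convergence-dPPM} then gives that $\{w^k\}$ converges weakly to some $w^*\in\cT^{-1}(0)$.

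The final step is to upgrade weak convergence to ordinary convergence and to interpret the limit. Since $\mathbb{W}=\mathbb{Y}\times\mathbb{Z}\times\mathbb{X}$ is finite-dimensional, weak and strong convergence coincide, so $\{w^k\}$ converges to $w^*=(y^*,z^*,x^*)$. Because $0\in\cT w^*$ unfolds into the three relations of the KKT system \eqref{eq:KKT}, \cite[Corollary~28.3.1]{rockafellar1970convex} certifies that $(y^*,z^*)$ is optimal for the primal problem \eqref{primal}; the first two inclusions $-B_1^*x^*\in\partial f_1(y^*)$ and $-B_2^*x^*\in\partial f_2(z^*)$, rewritten through the associated Fenchel--Young equalities, together with the primal feasibility $B_1y^*+B_2z^*-c=0$, then certify that $x^*$ attains the dual optimum in \eqref{dual}.

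I expect essentially no serious obstacle: the heavy lifting has already been carried out in Lemma~\ref{lem:inveritble} and Proposition~\ref{prop:equ-PADMM-dPPM}. The only points requiring mild care are, first, recording explicitly that Assumption~\ref{ass: CQ} is exactly what supplies $\cT^{-1}(0)\neq\emptyset$ so that Theorem~\ref{Th: convergence-dPPM} is applicable, and second, the identification of the fixed-point limit with a full primal-dual solution pair, where the dual optimality of $x^*$ (as opposed to merely the primal optimality of $(y^*,z^*)$) should be spelled out via the subdifferential inclusions in the KKT system.
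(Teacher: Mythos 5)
Your proposal is correct and follows exactly the route the paper takes: the corollary is deduced by combining the pADMM--dPPM equivalence of Proposition~\ref{prop:equ-PADMM-dPPM} (which supplies admissibility of $\cM$ and the Lipschitz continuity of $(\cM+\cT)^{-1}$) with the dPPM convergence result of Theorem~\ref{Th: convergence-dPPM}, using Assumption~\ref{ass: CQ} for $\cT^{-1}(0)\neq\emptyset$ and the KKT system \eqref{eq:KKT} to identify the limit as a primal--dual optimal pair. Your additional care in spelling out the weak-to-strong upgrade in finite dimensions and the dual optimality of $x^*$ is consistent with, and slightly more explicit than, the paper's treatment.
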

	\subsection{An accelerated pADMM}
	Based on the equivalence stated in Proposition \ref{prop:equ-PADMM-dPPM}, we can employ the accelerated dPPM introduced in Algorithm \ref{alg:acc-dPPM} to derive an accelerated pADMM, as outlined in Algorithm \ref{alg:acc-pADMM}.
	\begin{algorithm}[ht]
		\caption{An accelerated pADMM for solving two-block convex optimization problem \eqref{primal}}
		\label{alg:acc-pADMM}
		\begin{algorithmic}[1]
			\STATE {Input: Let $\cT_1$ and $\cT_2$ be two self-adjoint positive semidefinite linear operators $\mathbb{Y}$ and $\mathbb{Z}$, respectively. Choose an initial point $w^{0}=(y^{0}, z^{0}, x^{0})\in \operatorname{dom} (f_1) \times \operatorname{dom} (f_2)\times \mathbb{X}$. Let $\hat{w}^0:=w^{0}$. Set parameters $\sigma > 0$, $\alpha\geq 2$ and $\rho\in(0,2]$. For $k=0,1, \ldots,$ perform the following steps in each iteration.}
			\STATE {Step 1. $\bz^{k}=\underset{z \in \mathbb{Z}}{\arg \min }\left\{L_\sigma\left(y^k, z ; x^k\right)+\frac{1}{2}\|z-z^{k}\|_{\mathcal{T}_2}^2\right\}$.}
			\STATE{Step 2. $\bx^{k}={x}^k+\sigma (B_{1}{y}^{k}+B_{2}\bz^{k}-c) $.}
			\STATE {Step 3. $\by^{k}=\underset{y \in \mathbb{Y}}{\arg \min }\left\{L_\sigma\left(y, \bz^{k} ; \bx^{k}\right)+\frac{1}{2}\|y-y^{k}\|_{\mathcal{T}_1}^2\right \}$.
			}
			\STATE {Step 4. $\hw^{k+1}= (1-\rho){w}^{k}+ \rho \bw^{k}$.}
			\STATE {Step 5. $w^{k+1}=w^k +\frac{\alpha}{2(k+\alpha)} (\hat{w}^{k+1}-w^k)+\frac{k}{k+\alpha}\left(\hat{w}^{k+1}-\hat{w}^{k}\right)$.}
		\end{algorithmic}
	\end{algorithm}
	\begin{remark}
		{When $\cT_i=0$ for $i=1,2$ in Algorithm \ref{alg:acc-pADMM}, we can obtain an accelerated ADMM. As will be shown in Theorem \ref{Th:complexity-acc-pADMM}, this accelerated ADMM exhibits a nonergodic convergence rate of $O(1/k)$ for both the KKT residual and the primal objective function value gap, which is similar to the findings in \cite{zhang2022efficient}.  Furthermore, by setting $\cT_i=\sigma(\lambda_{\max}(B_i^*B_i)\cI-B_i^*B_i)$ for $i=1,2$ in Algorithm \ref{alg:acc-pADMM}, we can obtain an accelerated LADMM. Compared to the algorithm in \cite{li2019accelerated}, the $\cT_i$ for $i=1,2$ in Algorithm \ref{alg:acc-pADMM} will not tend to infinity as $k$ increases, which implies that this accelerated LADMM has a larger primal step length. Additionally, both $\cT_1$ and $\cT_2$ in Algorithm \ref{alg:acc-pADMM} can be positive semidefinite under Assumption \ref{ass: Assump-solvability}, which is a significant difference compared to work of \cite{sabach2022faster}. Finally, the accelerated pADMM introduced in \cite{yang2023accelerated}, where both $\mathcal{T}_1$ and $\mathcal{T}_2$ are positive definite, is a special case of Algorithm \ref{alg:acc-pADMM} with $\alpha = 2$.}
	\end{remark}
	
	According to the convergence result of the accelerated dPPM in Theorem \ref{Th:convergence-Halpern-dPPM}, we can obtain the global convergence of Algorithm \ref{alg:acc-pADMM} in the following corollary.
	\begin{corollary}\label{coro:convergence-acc-pADMM}
		Suppose that Assumptions \ref{ass: CQ} and \ref{ass: Assump-solvability} hold. The sequence $\{\bw^k\}=\{(\by^k,\bz^k,\bx^k)\}$ generated by Algorithm \ref{alg:acc-pADMM} {converges to} the point $w^{*}=(y^*,z^*,x^*)$, where $(y^{*},z^{*})$ is a solution to problem \eqref{primal} and $x^{*}$ is a solution to problem \eqref{dual}.
	\end{corollary}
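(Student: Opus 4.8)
The plan is to recognize Algorithm \ref{alg:acc-pADMM} as nothing but the accelerated dPPM of Algorithm \ref{alg:acc-dPPM} applied to the specific operators $\cT$ in \eqref{def:T} and $\cM$ in \eqref{def:M}, and then to invoke the convergence result in Theorem \ref{Th:convergence-Halpern-dPPM}. First I would observe that Steps 1--3 of Algorithm \ref{alg:acc-pADMM} are identical to Steps 1--3 of the pADMM in Algorithm \ref{alg:pADMM}; hence the argument behind Proposition \ref{prop:equ-PADMM-dPPM} shows that these three steps produce exactly $\bw^k=(\cM+\cT)^{-1}\cM w^k=\hcT w^k$. Step 4 then yields $\hw^{k+1}=(1-\rho)w^k+\rho\bw^k=\hcF_\rho w^k$, and Step 5 coincides verbatim with the acceleration Step 3 of Algorithm \ref{alg:acc-dPPM}. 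Consequently the sequences $\{w^k\}$, $\{\hw^k\}$, and $\{\bw^k\}$ generated by Algorithm \ref{alg:acc-pADMM} agree with those generated by Algorithm \ref{alg:acc-dPPM} for this choice of $\cT$ and $\cM$, and the same initial point $w^0$.

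Next I would verify the hypotheses of Theorem \ref{Th:convergence-Halpern-dPPM}. The operator $\cT$ in \eqref{def:T} is maximal monotone, and $\cT^{-1}(0)\neq\emptyset$ because, by \cite[Corollary 28.3.1]{rockafellar1970convex}, a point $w^*=(y^*,z^*,x^*)$ solves the KKT system \eqref{eq:KKT} if and only if $0\in\cT w^*$, and such a point exists under Assumption \ref{ass: CQ}. By Proposition \ref{prop:equ-PADMM-dPPM}, together with Lemma \ref{lem:inveritble}, under Assumption \ref{ass: Assump-solvability} the operator $\cM$ is an admissible preconditioner and $(\cM+\cT)^{-1}$ is Lipschitz continuous, hence continuous. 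The remaining requirement that $\cM$ have closed range is automatic, since $\mathbb{W}$ is finite-dimensional and so every linear operator on it has closed range.

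Finally I would apply Theorem \ref{Th:convergence-Halpern-dPPM}: when $\alpha=2$ the sequence $\{\bw^k\}$ converges strongly to some $w^*\in\cT^{-1}(0)$, and when $\alpha>2$ it converges weakly to a point in $\cT^{-1}(0)$. Since $\mathbb{W}$ is finite-dimensional, weak and strong convergence coincide, so in either case $\{\bw^k\}$ converges to a limit $w^*=(y^*,z^*,x^*)\in\cT^{-1}(0)$. Unpacking $0\in\cT w^*$ via \eqref{def:T} shows that $w^*$ satisfies the KKT system \eqref{eq:KKT}, whence $(y^*,z^*)$ is a solution to the primal problem \eqref{primal} and $x^*$ is a solution to the dual problem \eqref{dual}, again by \cite[Corollary 28.3.1]{rockafellar1970convex}.

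I do not expect any serious obstacle here, as the heavy lifting has already been carried out in Proposition \ref{prop:equ-PADMM-dPPM} and Theorem \ref{Th:convergence-Halpern-dPPM}. The only points demanding care are the clean identification of Steps 1--4 of Algorithm \ref{alg:acc-pADMM} with $\bw^k=\hcT w^k$ and $\hw^{k+1}=\hcF_\rho w^k$, and the observation that finite-dimensionality upgrades the weak convergence in the case $\alpha>2$ to genuine convergence of $\{\bw^k\}$.
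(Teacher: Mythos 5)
Your proposal is correct and takes essentially the same route as the paper, whose proof of this corollary is simply the citation ``It follows from Theorem \ref{Th:convergence-Halpern-dPPM} and Proposition \ref{prop:equ-PADMM-dPPM}.'' You have merely spelled out the details the paper leaves implicit (identification of the algorithm steps with $\hcT$ and $\hcF_\rho$, verification of the hypotheses, the finite-dimensionality remark upgrading weak to strong convergence, and the unpacking of $0\in\cT w^*$ into the KKT system), all of which are accurate.
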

	\begin{proof}
		It follows from Theorem \ref{Th:convergence-Halpern-dPPM} and Proposition \ref{prop:equ-PADMM-dPPM}.
	\end{proof}
	To analyze the convergence rate of Algorithm \ref{alg:acc-pADMM}, we begin by considering the residual mapping associated with the KKT system \eqref{eq:KKT}, as introduced in \cite{han2018linear}:
	\begin{equation}\label{def:KKT_residual}
		\mathcal{R}(w): = \left(
		\begin{array}{c}
			y - {\rm Prox}_{f_1}(y - B_1^*x)\\
			z - {\rm Prox}_{f_2}(z - B_2^*x)\\
			c - B_1 y - B_2 z
		\end{array}
		\right), \quad \forall w=(y,z,x) \in \mathbb{W}.
	\end{equation}
	It is clear that $w^{*}=(y^*, z^*, x^*)$ satisfies the KKT system \eqref{eq:KKT} if and only if $\mathcal{R}(w^{*}) = 0$. Let $\{(\by^k,\bz^k)\}$ be the sequence generated by Algorithm \ref{alg:acc-pADMM}. To estimate the primal objective function value gap, we define
	$$
	h(\by^{k}, \bz^{k}):=f_1(\by^{k})+f_2(\bz^{k})-f_1(y^*)-f_2(z^*), \quad \forall k \geq 0,
	$$
	where $\left(y^*, z^*\right)$ is the limit point of the sequence $\{(\by^k, \bz^k)\}$. The next lemma provides the lower and upper bounds for the primal objective function value gap.
	\begin{lemma}
		Suppose that Assumptions \ref{ass: CQ} and \ref{ass: Assump-solvability} hold. Let $\{(\by^{k},\bz^{k},\bx^{k})\}$ be the sequence generated by Algorithm \ref{alg:acc-pADMM}. Then for all $k \geq 0$, we have the following bounds:
		\begin{equation}\label{obj-upperbound}
			\begin{array}{ll}
				h(\by^k,\bz^k) & \leq  \left\langle\sigma B_1( y^*-\by^k)-\bx^k,  (B_1 \by^k+B_2\bz^k-c )\right \rangle\\
				& \quad + \left\langle y^*-\by^k,  \cT_1(\by^k-y^k)\right\rangle +\left\langle z^*-\bz^k , \cT_2(\bz^k-z^k)\right\rangle
			\end{array}
		\end{equation}
		and
		\begin{equation}\label{obj-lowerbound}
			\begin{array}{l}
				h\left(\by^{k}, \bz^{k}\right) \geq\left\langle B_1 \by^{k}+B_2 \bz^{k}-c,-x^*\right\rangle,
			\end{array}
		\end{equation}
		where $\left(y^*, z^*,x^*\right)$ is the limit point of the sequence $\{(\by^k, \bz^k,\bx^k)\}$.
	\end{lemma}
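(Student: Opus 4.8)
The plan is to read off the first-order optimality conditions of the two minimization subproblems (Steps 1 and 3 of Algorithm \ref{alg:acc-pADMM}) and then pair them with convexity against two different reference points: for the upper bound I test the subgradients at $(\by^k,\bz^k)$ against the limit point $(y^*,z^*)$, and for the lower bound I test the subgradients of the limit point against $(\by^k,\bz^k)$.

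First I would derive the subgradient inclusions. For the $z$-subproblem in Step 1, stationarity gives
\[
0 \in \partial f_2(\bz^k) + B_2^* x^k + \sigma B_2^*(B_1 y^k + B_2 \bz^k - c) + \cT_2(\bz^k - z^k),
\]
and the key simplification is that, by the definition of $\bx^k$ in Step 2, the two linear-in-$x$ terms collapse to $B_2^*\bx^k$, so that $-B_2^*\bx^k - \cT_2(\bz^k - z^k) \in \partial f_2(\bz^k)$. For the $y$-subproblem in Step 3, stationarity gives
\[
-B_1^*\bx^k - \sigma B_1^*(B_1\by^k + B_2\bz^k - c) - \cT_1(\by^k - y^k) \in \partial f_1(\by^k).
\]
Here the penalty residual does \emph{not} collapse into $\bx^k$, because it involves $\by^k$ rather than $y^k$, and this surviving term is precisely what produces the $\sigma B_1(y^*-\by^k)$ contribution in \eqref{obj-upperbound}.

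For the lower bound \eqref{obj-lowerbound}, I would invoke the KKT system \eqref{eq:KKT} for $(y^*,z^*,x^*)$, namely $-B_1^*x^* \in \partial f_1(y^*)$ and $-B_2^*x^* \in \partial f_2(z^*)$, and apply the subgradient inequality of $f_1$ at $y^*$ (tested against $\by^k$) and of $f_2$ at $z^*$ (tested against $\bz^k$). Summing the two inequalities and using the primal feasibility $B_1 y^* + B_2 z^* = c$ to rewrite $B_1(\by^k - y^*) + B_2(\bz^k - z^*) = B_1\by^k + B_2\bz^k - c$, I obtain $h(\by^k,\bz^k) \geq -\langle x^*,\, B_1\by^k + B_2\bz^k - c\rangle$, which is exactly \eqref{obj-lowerbound}.

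For the upper bound \eqref{obj-upperbound}, I would instead use the two inclusions derived above as subgradients at $\by^k$ and $\bz^k$ and test them against $y^*$ and $z^*$. Summing the resulting estimates for $f_1(\by^k) - f_1(y^*)$ and $f_2(\bz^k) - f_2(z^*)$, the $\bx^k$-terms combine into $\langle \bx^k,\, B_1(y^*-\by^k) + B_2(z^*-\bz^k)\rangle$, which by feasibility equals $-\langle \bx^k,\, B_1\by^k + B_2\bz^k - c\rangle$; adding the leftover penalty term $\langle \sigma B_1(y^*-\by^k),\, B_1\by^k + B_2\bz^k - c\rangle$ together with the two proximal terms $\langle y^*-\by^k, \cT_1(\by^k-y^k)\rangle$ and $\langle z^*-\bz^k, \cT_2(\bz^k-z^k)\rangle$ gives precisely \eqref{obj-upperbound}. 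The argument is essentially bookkeeping; the only place demanding genuine care is tracking the signs of the cross terms and recognizing that the $y$-subproblem's penalty residual remains uncancelled — the step that distinguishes the $\sigma B_1(y^*-\by^k)$ factor in the upper bound from the plain $-\bx^k$ factor.
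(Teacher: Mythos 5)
Your proof is correct and follows essentially the same route as the paper: the same subproblem optimality conditions (with the same observation that the $z$-subproblem's multiplier collapses into $\bx^k$ while the $y$-subproblem's penalty residual survives as $\tx^k=\bx^k+\sigma(B_1\by^k+B_2\bz^k-c)$), the same KKT-based subgradient argument for the lower bound, and the same convexity pairing against $(y^*,z^*)$ for the upper bound. The only cosmetic difference is that you apply the subgradient inequality to $f_1,f_2$ directly in the primal, whereas the paper routes the identical inequality through $f_1^*,f_2^*$ via the Fenchel equality and the relation $d_{f_1}(x^*)+d_{f_2}(x^*)=-f_1(y^*)-f_2(z^*)$; the two formulations are equivalent.
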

	
	\begin{proof}
		For notational convenience, for any $x \in \mathbb{X}$, define
		$$
		d_{f_1}(x):=f_1^*\left(-B_1^* x\right)+\langle x, c\rangle, \quad  \quad d_{f_2}(x):=f_2^*\left(-B_2^* x\right)
		$$
		and
		$$
		\tx^k:=\bx^k+\sigma (B_1 \by^k+B_2\bz^k-c), \quad  \forall k \geq 0.
		$$
		On the one hand, from the optimality conditions of the subproblems of Algorithm \ref{alg:acc-pADMM}, we have for any $k\geq 0$,
		$$
		-B_2^* \bx^{k}-\cT_2(\bz^k-z^k) \in \partial f_2(\bz^k), \quad-B_1^* \tx^k- \cT_1(\by^k-y^k)\in \partial f_1(\by^k).
		$$
		It follows from \cite[Theorem 23.5]{rockafellar1970convex} that for any $k \geq 0$,
		$$
		\left\{
		\begin{array}{l}
			f_1^*\left(-B_1^* \tx^k- \cT_1(\by^k-y^k)\right)=\left\langle \by^k, -B_1^* \tx^k- \cT_1(\by^k-y^k)\right\rangle-f_1\left(\by^k\right),\\
			f_2^*\left(-B_2^* \bx^{k}-\cT_2(\bz^k-z^k)\right)=\left\langle \bz^k, -B_2^* \bx^{k}-\cT_2(\bz^k-z^k)\right\rangle-f_2\left(\bz^{k}\right) .
		\end{array}\right.
		$$
		Summing them up, we can obtain that for any $k\geq 0$,
		% \begin{equation}\label{obj-upperbound-1}
			% \begin{array}{ll}
				% &f_1^*\left(-B_1^* \tx^k- \cT_1(\by^k-y^k)\right)+f_2^*\left(-B_2^* \bx^{k}-\cT_2(\bz^k-z^k)\right)+\langle \tx^k, c\rangle\\
				% &\quad =\left\langle \by^k, -B_1^* \tx^k- \cT_1(\by^k-y^k)\right\rangle-f_1\left(\by^k\right)+\left\langle \bz^k, -B_2^* \bx^{k}-\cT_2(\bz^k-z^k)\right\rangle-f_2\left(\bz^{k}\right)+\langle \tx^k, c\rangle\\
				% &\quad =-f_1\left(\by^k\right)-f_2\left(\bz^{k}\right)-   \left\langle   (B_1 \by^k+B_2\bz^k-c), \bx^k+\sigma (B_1\by^k-c)\right\rangle -\left\langle \by^k, \cT_1(\by^k-y^k)\right\rangle-\left\langle \bz^k, \cT_2(\bz^k-z^k)\right\rangle.
				% \end{array}
			% \end{equation}
		\begin{equation}\label{obj-upperbound-1}
			\begin{array}{ll}
				&f_1^*\left(-B_1^* \tx^k- \cT_1(\by^k-y^k)\right)+f_2^*\left(-B_2^* \bx^{k}-\cT_2(\bz^k-z^k)\right)+\langle \tx^k, c\rangle\\
				=&-f_1\left(\by^k\right)-f_2\left(\bz^{k}\right)-   \left\langle   (B_1 \by^k+B_2\bz^k-c), \bx^k+\sigma (B_1\by^k-c)\right\rangle \\
				&-\left\langle \by^k, \cT_1(\by^k-y^k)\right\rangle-\left\langle \bz^k, \cT_2(\bz^k-z^k)\right\rangle.
			\end{array}
		\end{equation}
		On the other hand, from the KKT conditions in \eqref{eq:KKT} and \cite[Theorem 23.5]{rockafellar1970convex}, we have
		$$
		\begin{array}{l}
			y^*\in \partial f_{1}^{*}(-B_1^{*}x^*), \quad z^*\in \partial f_{2}^{*}(-B_2^{*}x^*).\\
		\end{array}
		$$
		Thus, it follows from the convexity of $f_1^*$ and $f_2^*$ that for any $k\geq 0$,
		$$
		\left\{
		\begin{array}{ll}
			&f_1^*\left(-B_1^* \tx^k- \cT_1(\by^k-y^k)\right)+\langle \tx^k, c\rangle-d_{f_1}(x^*) \\
			&\quad\geq \left\langle y^*,  -B_1^* \tx^k- \cT_1(\by^k-y^k)+B_1^*x^*\right\rangle+\langle c,\tx^k-x^* \rangle,  \\
			&f_2^*\left(-B_2^* \bx^{k}-\cT_2(\bz^k-z^k)\right)-d_{f_2}(x^*)\geq \left\langle z^* , -B_2^* \bx^{k}-\cT_2(\bz^k-z^k)+B_2^{*}x^{*}\right\rangle.\\
		\end{array}\right.
		$$
		Summing them up and by noting $B_1y^*+B_2{z^*}=c$, we have for any $k\geq0$,
		% \begin{equation}\label{obj-upperbound-2}
			% \begin{array}{ll}
				% &f_1^*\left(-B_1^* \tx^k- \cT_1(\by^k-y^k)\right)+f_2^*\left(-B_2^* \bx^{k}-\cT_2(\bz^k-z^k)\right)-d_{f_1}(x^*)-d_{f_2}(x^*) +\langle \tx^k, c\rangle \\ &
				% \quad \geq  \left\langle y^*,  -B_1^* \tx^k- \cT_1(\by^k-y^k)+B_1^*x^*\right\rangle+\langle c,\tx^k-x^*\rangle
				% + \left\langle z^* , -B_2^* \bx^{k}-\cT_2(\bz^k-z^k)+B_2^{*}x^{*}\right\rangle \\
				% &\quad =-\left\langle B_1 y^* -c, \sigma (B_1 \by^k+B_2\bz^k-c )\right \rangle - \left\langle y^*,  \cT_1(\by^k-y^k)\right\rangle -\left\langle z^* , \cT_2(\bz^k-z^k)\right\rangle.
				% \end{array}
			\begin{equation}\label{obj-upperbound-2}
				\begin{array}{l}
					f_1^*\left(-B_1^* \tx^k- \cT_1(\by^k-y^k)\right)+f_2^*\left(-B_2^* \bx^{k}-\cT_2(\bz^k-z^k)\right)+\langle \tx^k, c\rangle \\
					\quad \geq d_{f_1}(x^*)+d_{f_2}(x^*) -\left\langle B_1 y^* -c, \sigma (B_1 \by^k+B_2\bz^k-c )\right \rangle  \\
					\qquad -\left\langle y^*,  \cT_1(\by^k-y^k)\right\rangle -\left\langle z^* , \cT_2(\bz^k-z^k)\right\rangle.
				\end{array}
			\end{equation}
			Combing \eqref{obj-upperbound-1} and \eqref{obj-upperbound-2}, we can deduce that for any $k\geq 0$,
			% \begin{equation*}
				% \begin{array}{ll}
					% -f_1\left(\by^k\right)-f_2\left(\bz^{k}\right)- \left\langle   (B_1 \by^k+B_2\bz^k-c), \bx^k+\sigma (B_1\by^k-c)\right\rangle\\
					% \quad \geq d_{f_2}(x^*)+d_{f_1}(x^*) -\left\langle B_1 y^* -c, \sigma (B_1 \by^k+B_2\bz^k-c )\right \rangle \\
					% \qquad- \left\langle y^*-\by^k,  \cT_1(\by^k-y^k)\right\rangle -\left\langle z^*-\bz^k , \cT_2(\bz^k-z^k)\right\rangle.
					% \end{array}
				% \end{equation*}
			% This can be rewritten as
			\begin{equation*}
				\begin{array}{ll}				&f_1\left(\by^k\right)+f_2\left(\bz^k\right)+d_{f_1}(x^*)+d_{f_2}(x^*)\\ &\quad \leq       \left\langle\sigma B_1( y^*-\by^k)-\bx^k,  (B_1 \by^k+B_2\bz^k-c )\right \rangle \\
					&\qquad   + \left\langle y^*-\by^k,  \cT_1(\by^k-y^k)\right\rangle +\left\langle z^*-\bz^k , \cT_2(\bz^k-z^k)\right\rangle.
				\end{array}
			\end{equation*}
			Since  $d_{f_1}(x^*)+d_{f_2}(x^*)=-f_1(y^*)-f_2(z^*)$ from \cite[Theorem 28.4]{rockafellar1970convex}, we can derive for any $k\geq0$,
			\begin{equation*}
				\begin{array}{ll}
					h(\by^k,\bz^k) &\leq  \left\langle\sigma B_1( y^*-\by^k)-\bx^k,  (B_1 \by^k+B_2\bz^k-c )\right \rangle\\
					&\quad+ \left\langle y^*-\by^k,  \cT_1(\by^k-y^k)\right\rangle +\left\langle z^*-\bz^k , \cT_2(\bz^k-z^k)\right\rangle.
				\end{array}
			\end{equation*}
			Finally, according to the KKT conditions in \eqref{eq:KKT}, we have for any $k\geq 0$,
			$$
			f_1(\by^k)-f_1(y^*) \geq\langle-B_1^* x^*, \by^{k}-y^*\rangle, \quad f_2(\bz^{k})-f_2\left(z^*\right) \geq\langle {-{B_2^*}} x^*, \bz^{k}-z^*\rangle .
			$$
			Thus, the inequality \eqref{obj-lowerbound} holds since $B_1 y^*+B_2 z^*=c$. This completes the proof of the lemma.
		\end{proof}
		Now, we are ready to present the complexity result for Algorithm \ref{alg:acc-pADMM}.
		\begin{theorem}\label{Th:complexity-acc-pADMM}
			Suppose that Assumptions \ref{ass: CQ} and \ref{ass: Assump-solvability} hold. Let $\{(\by^{k},\bz^{k},\bx^{k})\}$ be the sequence generated by Algorithm \ref{alg:acc-pADMM}, and let $w^*=(y^*,z^*,x^*)$ be the limit point of the sequence $\{(\by^{k},\bz^{k},\bx^{k})\}$ and $R_0=\|w^{0}-w^{*}\|_{\cM}$.
			\begin{enumerate}
				\item[(a)] If $\alpha =2$, then for all $k \geq 0$, we have the following bounds:
				\begin{equation}
					\label{eq: complexity-bound-KKT=2}
					\|\mathcal{R}(\bar w^k)\| \leq \left( \frac{\sigma \|B_1^{*}\|+1}{\sqrt{\sigma}}+\|\sqrt{\cT_2}\|+\|\sqrt{\cT_1}\| \right) \frac{2R_0}{\rho(k+1)}
				\end{equation}
				and
				\begin{equation}
					\begin{array}{ll}
						\left(\frac{-1}{\sqrt{\sigma}}\|x^*\|\right) \frac{2 R_0}{\rho(k+1)}\leq h(\by^k,\bz^k)  \leq  \left(3R_0  + \frac{1}{\sqrt{\sigma}}\|x^*\|\right) \
						\frac{2R_0}{\rho(k+1)}.
					\end{array}
				\end{equation}
				\item[(b)] If $\alpha >2$, then we have the following bounds:
				\begin{equation}
					\begin{array}{ll}\label{eq: complexity-bound-KKT>2}
						\|\mathcal{R}(\bar w^k)\| =\left( \frac{\sigma \|B_1^{*}\|+1}{\sqrt{\sigma}}+\|\sqrt{\cT_2}\|+\|\sqrt{\cT_1}\| \right) o\left(\frac{1}{k+1}\right)\text { as } k \rightarrow+\infty \text {}
					\end{array}
				\end{equation}
				and
				\begin{equation}
					\begin{aligned}
						|h(\by^{k}, \bz^{k})|= o\left(\frac{1}{k+1}\right)\text { as } k \rightarrow+\infty \text {. }
					\end{aligned}
				\end{equation}
			\end{enumerate}
		\end{theorem}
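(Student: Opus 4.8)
The plan is to funnel both quantities of interest through the single $\cM$-norm residual $\|w^k-\bw^k\|_{\cM}$, which Proposition \ref{prop:complexity-acc-dPPM} already controls. First I would invoke Proposition \ref{prop:equ-PADMM-dPPM} to identify the accelerated pADMM iterates with the accelerated dPPM iterates, so that $\bw^k=\hcT w^k$ and $\hw^{k+1}=\hcF_\rho w^k=(1-\rho)w^k+\rho\bw^k$. This gives $w^k-\hw^{k+1}=\rho(w^k-\bw^k)$, whence $\|w^k-\bw^k\|_{\cM}=\rho^{-1}\|w^k-\hw^{k+1}\|_{\cM}$. Plugging in Proposition \ref{prop:complexity-acc-dPPM}(a) yields $\|w^k-\bw^k\|_{\cM}\le \frac{2R_0}{\rho(k+1)}$ when $\alpha=2$, and part (b) yields $\|w^k-\bw^k\|_{\cM}=o(1/(k+1))$ when $\alpha>2$.

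Next I would expand the seminorm explicitly. Writing $\dy=y^k-\by^k$, $\dz=z^k-\bz^k$, $\dx=x^k-\bx^k$, a direct computation with $\cM$ from \eqref{def:M} gives the identity
\[
\|w^k-\bw^k\|_{\cM}^2=\|\dy\|_{\cT_1}^2+\|\dz\|_{\cT_2}^2+\Bigl\|\sqrt{\sigma}B_1\dy+\tfrac{1}{\sqrt{\sigma}}\dx\Bigr\|^2,
\]
a sum of three nonnegative terms, so each of $\|\dy\|_{\cT_1}$, $\|\dz\|_{\cT_2}$, and $\|\sqrt{\sigma}B_1\dy+\frac{1}{\sqrt{\sigma}}\dx\|$ is bounded by $\|w^k-\bw^k\|_{\cM}$. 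For the KKT residual \eqref{def:KKT_residual} I would bound componentwise. Using $\bx^k=x^k+\sigma(B_1 y^k+B_2\bz^k-c)$, the feasibility component equals $\frac{1}{\sqrt{\sigma}}\bigl(\sqrt{\sigma}B_1\dy+\frac{1}{\sqrt{\sigma}}\dx\bigr)$, so its norm is $\le \frac{1}{\sqrt{\sigma}}\|w^k-\bw^k\|_{\cM}$. For the proximal components I would use the subproblem optimality conditions $-B_2^*\bx^k-\cT_2(\bz^k-z^k)\in\partial f_2(\bz^k)$ and $-B_1^*\tx^k-\cT_1(\by^k-y^k)\in\partial f_1(\by^k)$ with $\tx^k=\bx^k+\sigma(B_1\by^k+B_2\bz^k-c)$, together with $1$-Lipschitzness of the proximal maps, to get $\|\bz^k-{\rm Prox}_{f_2}(\bz^k-B_2^*\bx^k)\|\le\|\sqrt{\cT_2}\|\,\|w^k-\bw^k\|_{\cM}$ and $\|\by^k-{\rm Prox}_{f_1}(\by^k-B_1^*\bx^k)\|\le(\sqrt{\sigma}\|B_1^*\|+\|\sqrt{\cT_1}\|)\|w^k-\bw^k\|_{\cM}$. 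Summing the three, together with the residual bound from the first paragraph, produces exactly the stated coefficient in \eqref{eq: complexity-bound-KKT=2} (and the $o(1/(k+1))$ version in \eqref{eq: complexity-bound-KKT>2}).

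For the objective gap I would use \eqref{obj-upperbound} and \eqref{obj-lowerbound}. The lower bound is immediate: $h(\by^k,\bz^k)\ge\langle B_1\by^k+B_2\bz^k-c,-x^*\rangle\ge-\frac{1}{\sqrt{\sigma}}\|x^*\|\,\|w^k-\bw^k\|_{\cM}$. The upper bound is the main obstacle, since the cross terms in \eqref{obj-upperbound} involve $\by^k,\bz^k,\bx^k$ rather than the differences, and must be shown to stay bounded by $R_0$. This requires the uniform estimate $\|w^k-w^*\|_{\cM}\le R_0$, which for $\alpha=2$ I would prove by induction from the Halpern recursion \eqref{alg:w-halpern} and the $\cM$-nonexpansiveness of $\hcF_\rho$ in Proposition \ref{prop:M-nonexpansive}(d); since $\hcT$ is also $\cM$-nonexpansive, this gives $\|\bw^k-w^*\|_{\cM}\le R_0$ as well.

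With these bounds in hand, I would split the first term of \eqref{obj-upperbound} as $\langle -x^*,\,B_1\by^k+B_2\bz^k-c\rangle$ plus a remainder, rewriting the remaining factor as $\sqrt{\sigma}\bigl(\sqrt{\sigma}B_1(\by^k-y^*)+\frac{1}{\sqrt{\sigma}}(\bx^k-x^*)\bigr)$, whose norm the seminorm identity bounds by $\sqrt{\sigma}\|\bw^k-w^*\|_{\cM}\le\sqrt{\sigma}R_0$; pairing with $\|B_1\by^k+B_2\bz^k-c\|\le\frac{1}{\sqrt{\sigma}}\|w^k-\bw^k\|_{\cM}$ gives a contribution $\le (R_0+\frac{1}{\sqrt{\sigma}}\|x^*\|)\|w^k-\bw^k\|_{\cM}$. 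The two $\cT_i$ terms I would estimate by Cauchy–Schwarz in the $\cT_i$-seminorms, using $\|\by^k-y^*\|_{\cT_1},\|\bz^k-z^*\|_{\cT_2}\le\|\bw^k-w^*\|_{\cM}\le R_0$, each yielding $\le R_0\|w^k-\bw^k\|_{\cM}$. Collecting the three contributions gives the factor $3R_0+\frac{1}{\sqrt{\sigma}}\|x^*\|$, and combining with $\|w^k-\bw^k\|_{\cM}\le\frac{2R_0}{\rho(k+1)}$ completes part (a). For part (b) the identical decomposition applies; here $\|w^k-w^*\|_{\cM}$ and $\|\bw^k-w^*\|_{\cM}$ are bounded because the shadow sequence $\{\cC^*w^k\}$ converges weakly (hence is bounded) by Theorem \ref{Th:convergence-Halpern-dPPM}(b), so multiplying a bounded constant by $\|w^k-\bw^k\|_{\cM}=o(1/(k+1))$ delivers the claimed $o(1/(k+1))$ rates.
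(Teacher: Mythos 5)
Your proposal is correct and follows essentially the same route as the paper's proof: reduce everything to the $\cM$-seminorm residual $\|w^k-\bw^k\|_{\cM}=\rho^{-1}\|w^k-\hw^{k+1}\|_{\cM}$ via Proposition \ref{prop:complexity-acc-dPPM}, expand the seminorm into its three nonnegative blocks, bound the KKT components through Step 2 and the subproblem optimality conditions with $1$-Lipschitzness of the proximal maps, and handle the objective gap with \eqref{obj-upperbound}--\eqref{obj-lowerbound} together with the inductive bound $\|w^k-w^*\|_{\cM}\leq R_0$ and $\cM$-nonexpansiveness of $\hcT$. The only cosmetic difference is that for $\alpha>2$ you justify boundedness via the shadow sequence, while the paper cites the convergence of $\{\bw^k\}$ from Corollary \ref{coro:convergence-acc-pADMM}; both are valid.
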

		\begin{proof}
			Consider the case where $\alpha=2$. We first estimate the convergence rate of $\mathcal{R}(\bw^k)$ for any $k\geq 0$. According to Proposition \ref{prop:complexity-acc-dPPM}, we have
			$$
			\|\hat{w}^{k+1}-w^{k}\|_{\cM}^{2}\leq \frac{4R_0^2}{(k+1)^2}, \quad \forall k\geq 0.
			$$
			By the definition of $\cM$ in \eqref{def:M}, this can be rewritten as
			\begin{equation}\label{Th:complexity-acc-pADMM-1}
				\|\hy^{k+1}-y^{k}\|^{2}_{\cT_1}+\frac{1}{\sigma}\|\sigma B_{1}(\hat{y}^{k+1}-y^{k})+(\hx^{k+1}-x^{k})\|^{2}+\|\hz^{k+1}-z^{k}\|_{\cT_2}^2 \leq \frac{4R_0^2}{(k+1)^2}, \forall k\geq 0.
			\end{equation}
			Also, from Step 4 in Algorithm \ref{alg:acc-pADMM}, we can obtain that for any $k\geq0$,
			\begin{equation*}
				\left\{
				\begin{array}{c}
					\hy^{k+1}-y^{k}=\rho(\by^{k}-y^{k}), \\
					\hz^{k+1}-z^{k}=\rho(\bz^{k}-z^{k}), \\
					\hx^{k+1}-x^{k}=\rho(\bx^{k}-x^{k}).
				\end{array}
				\right.
			\end{equation*}
			Thus, we can rewrite \eqref{Th:complexity-acc-pADMM-1} as
			\begin{equation}\label{Th:complexity-acc-pADMM-2}
				\|\by^k-y^{k}\|^{2}_{\cT_1}+\frac{1}{\sigma}\|\sigma B_{1}(\by^{k}-y^{k})+(\bx^{k}-x^{k})\|^{2}+\|\bz^{k}-z^{k}\|_{\cT_2}^2 \leq \frac{4R_0^2}{\rho^2(k+1)^2}, \forall k\geq 0.
			\end{equation}
			Due to Step 2 in Algorithm \ref{alg:acc-pADMM}, we can deduce that for any $k\geq 0$,
			\begin{equation*}
				\begin{array}{ll}
					\|\sigma  B_{1}(\by^{k}-y^{k})+(\bx^{k}-x^{k})\|&=\|\sigma   B_{1}(\by^{k}-y^{k})+\sigma (B_{1}{y}^{k}+B_{2}\bz^{k}-c)\|\\
					&=\sigma\| B_{1}{\by}^{k}+B_{2}\bz^{k}-c\|,
				\end{array}
			\end{equation*}
			which together with \eqref{Th:complexity-acc-pADMM-2} yields that
			\begin{equation}\label{Th:complexity-acc-pADMM-3}
				\| B_{1}{\by}^{k}+B_{2}\bz^{k}-c\| \leq  \frac{2 R_0}{\sqrt{\sigma}\rho(k+1)}, \quad \forall k\geq 0.
			\end{equation}
			Moreover, from the optimality conditions of the subproblems in Algorithm \ref{alg:acc-pADMM}, we have for any $k\geq 0$,
			\begin{equation}\label{Th:complexity-acc-pADMM-4}
				\left\{
				\begin{array}{l}
					\bz^{k}={\rm Prox}_{f_2}(\bz^{k}-B_{2}^{*}\bx^{k}-\cT_2(\bz^{k}-z^{k})), \\
					\by^{k}={\rm Prox}_{f_1}(\by^{k}-B_{1}^{*}(\bx^{k}+ \sigma (B_{1}{\by}^{k}+B_{2}\bz^{k}-c))           -\cT_1(\by^{k}-y^{k})),
				\end{array}
				\right.
			\end{equation}
			which together with \eqref{Th:complexity-acc-pADMM-2} yields that for any $k\geq 0$,
			\begin{equation}\label{Th:complexity-acc-pADMM-5}
				\begin{array}{ll}
					&\|\bz^{k} - {\rm Prox}_{f_2}(\bz^{k} - B_2^*\bx^{k}) \|\\
					=&\|{\rm Prox}_{f_2}(\bz^{k}-B_{2}^{*}\bx^{k}-\cT_2(\bz^{k}-z^{k}))- {\rm Prox}_{f_2}(\bz^{k} - B_2^*\bx^{k}) \|\\
					\leq& \|\cT_2(\bz^{k}-z^{k})\|\\
					\leq& \|\sqrt{\cT_2}\|\|\bz^{k}-z^{k}\|_{\cT_2}\\
					\leq& \|\sqrt{\cT_2}\|\frac{2R_0}{\rho(k+1)}.
				\end{array}
			\end{equation}
			Similarly,  from \eqref{Th:complexity-acc-pADMM-2}, \eqref{Th:complexity-acc-pADMM-3}, and \eqref{Th:complexity-acc-pADMM-4}, we also have for any $k\geq 0$,
			\begin{equation}\label{Th:complexity-acc-pADMM-6}
				\begin{array}{ll}
					&\|\by^{k} - {\rm Prox}_{f_1}(\by^{k} - B_1^*\bx^{k}) \|\\
					\leq& \|B_1^*\sigma (B_{1}{\by}^{k}+B_{2}\bz^{k}-c) +\cT_1(\by^{k}-y^{k})\|\\
					\leq& \sigma \|B_1^*\| \|B_{1}{\by}^{k}+B_{2}\bz^{k}-c\| +\|\cT_1(\by^{k}-y^{k})\|\\
					\leq&  (\sqrt{\sigma}\|B_1^*\|+ \|\sqrt{\cT_1}\|)\frac{2R_0}{\rho(k+1)}.
				\end{array}
			\end{equation}
			Therefore, by  \eqref{Th:complexity-acc-pADMM-3}, \eqref{Th:complexity-acc-pADMM-5}, and \eqref{Th:complexity-acc-pADMM-6}, we can obtain that for any $k\geq 0$,
			\begin{equation}
				\begin{array}{ll}
					\|\mathcal{R}(\bar w^k)\|&\leq \sqrt{\left( \frac{1}{\sigma}+\|\sqrt{\cT_2}\|^2+ (\sqrt{\sigma} \|B_1^{*}\|+\|\sqrt{\cT_1}\|)^2 \right)} \frac{2R_0}{\rho(k+1)}\\
					&\leq \left( \frac{\sigma \|B_1^{*}\|+1}{\sqrt{\sigma}}+\|\sqrt{\cT_2}\|+\|\sqrt{\cT_1}\| \right) \frac{2R_0}{\rho(k+1)}.
				\end{array}
			\end{equation}
			Now, we estimate the complexity result concerning the primal function value gap. For the lower bound of the primal function value gap, from \eqref{obj-lowerbound} and \eqref{Th:complexity-acc-pADMM-3}, we have for all $k \geq 0$,
			$$
			\begin{aligned}
				h(\by^{k}, \bz^{k}) & \geq\langle B_1 \by^{k}+B_2 \bz^{k}-c,-x^*\rangle \\
				& \geq -\|x^*\|\| B_1\by^{k}+B_2 \bz^{k}-c\|\\
				& \geq -\frac{2 R_0\|x^*\|}{\sqrt{\sigma}\rho(k+1)}.
			\end{aligned}
			$$
			% For the upper bound of the primal function value gap, we first claim that $\|w^k-w^*\|_{\cM}\leq R_0$ using induction.
			{On the other hand, $\hcF_\rho$ is $\cM$-nonexpansive for $\rho\in(0,2]$ by Proposition \ref{prop:M-nonexpansive}. Similar to \eqref{complexity-HPPPA-5}, we can derive from the induction that}
			\begin{equation*}
				\| w^{k}  -w^{*}\|_{\mathcal{M}}\leq R_0,\quad \forall k \geq 0.
			\end{equation*}
			Hence, from the $\cM$-nonexpansiveness of $\hcT$ by Proposition \ref{prop:M-nonexpansive}, we have
			\begin{equation*}
				\|\bw^{k}  -w^{*}\|_{\mathcal{M}}= \|\hcT w^k  -w^{*}\|_{\mathcal{M}} \leq \| w^k  -w^{*}\|_{\cM} \leq R_0, \quad \forall k \geq 0,
			\end{equation*}
			which implies
			\begin{equation*}\label{Th:complexity-acc-pADMM-8}
				\|\by^{k}-y^{*}\|^{2}_{\cT_1}+\frac{1}{\sigma}\|\sigma B_{1}(\by^{k}-y^{*})+(\bx^{k}-x^{*})\|^{2}+\|\bz^{k}-z^{*}\|_{\cT_2}^2 \leq R_0^2,\quad \forall k \geq 0.
			\end{equation*}
			This inequality together with \eqref{obj-upperbound}, \eqref{Th:complexity-acc-pADMM-2}, and \eqref{Th:complexity-acc-pADMM-3} yields that for all $k\geq0$,
			\begin{equation*}
				\begin{array}{ll}
					h(\by^k,\bz^k) &\leq  (\|\sigma B_1(\by^k-y^*)+(\bx^k-x^*)\| +\|x^*\|) \|B_1 \by^k+B_2\bz^k-c \|\\
					&\quad + \| y^*-\by^k\|_{\cT_1}\|\by^k-y^k\|_{\cT_1} +\| z^*-\bz^k\|_{\cT_2}\|\bz^k-z^k\|_{\cT_2}\\
					&\leq  (\sqrt{\sigma}R_0  +\|x^*\|)\frac{2 R_0}{\sqrt{\sigma}\rho(k+1)} +\frac{4R_0^2}{\rho(k+1)} \\
					&=  \left(3R_0  + \frac{1}{\sqrt{\sigma}}\|x^*\|\right)\frac{2 R_0}{\rho(k+1)}.
				\end{array}
			\end{equation*}
			
			\par Now, we establish the complexity results for the case where $\alpha>2$. According to Proposition \ref{prop:complexity-acc-dPPM}, we have
			\begin{equation*}
				\|w^{k}-\hat{w}^{k+1}\|_{\cM}=o\left(\frac{1}{k+1}\right) \text { as } k \rightarrow+\infty \text {. }
			\end{equation*}
			Similar to the previous case with $\alpha=2$, we can obtain
			\begin{equation*}
				\begin{array}{ll}
					\|\mathcal{R}(\bar w^k)\| =\left( \frac{\sigma \|B_1^{*}\|+1}{\sqrt{\sigma}}+\|\sqrt{\cT_2}\|+\|\sqrt{\cT_1}\| \right) o\left(\frac{1}{k+1}\right)\text { as } k \rightarrow+\infty \text {. }
				\end{array}
			\end{equation*}
			In addition, since $\|\bw^k-w^*\|$ is bounded due to Corollary \ref{coro:convergence-acc-pADMM}, from \eqref{obj-upperbound}, there exists a constant $C_1$ such that
			\begin{equation*}
				\begin{array}{ll}
					h(\by^k,\bz^k) &\leq  C_1 \left(\|B_1 \by^k+B_2\bz^k-c \|+\|\by^k-y^k\|_{\cT_1}+\|\bz^k-z^k\|_{\cT_2}\right).
				\end{array}
			\end{equation*}
			Hence, following the proof of the previous case with $\alpha=2$, we can similarly demonstrate that
			\begin{equation*}
				\begin{aligned}
					|h(\by^{k}, \bz^{k})|= o\left(\frac{1}{k+1}\right)\text { as } k \rightarrow+\infty \text {,}
				\end{aligned}
			\end{equation*}
			which completes the proof.
		\end{proof}
		
		{
			\begin{remark}
				For convex optimization problem \eqref{primal}, $f_1(\cdot)$ and $f_2(\cdot)$ are often of a composite structure, i.e. $f_1=g_1+p_1$, and $f_2=g_2+p_2$, where $g_1:\mathbb{Y}\to (-\infty, +\infty)$ and $g_2:\mathbb{Z}\to (-\infty, +\infty)$ are two continuously differentiable convex functions (e.g., convex quadratic functions), $p_{1}: \mathbb{Y} \to (-\infty, +\infty]$ and $p_{2}: \mathbb{Z} \to (-\infty, +\infty]$ are two proper closed convex functions. In this case, it is more convenient for computational purposes to define the following KKT residual mapping:
				\begin{equation}\label{def:KKT_residual-2}
					\mathcal{\widetilde{R}}(w) := \left(
					\begin{array}{c}
						y - {\rm Prox}_{p_1}(y-\nabla g_1(y) - B_1^*x)\\
						z - {\rm Prox}_{p_2}(z-\nabla g_2(z) - B_2^*x)\\
						c - B_1 y - B_2 z
					\end{array}
					\right), \quad \forall w=(y,z,x) \in \mathbb{W}.
				\end{equation}
				By substituting $\mathcal{\widetilde{R}}(\cdot)$ into \eqref{Th:complexity-acc-pADMM-5} and \eqref{Th:complexity-acc-pADMM-6}, and replacing \eqref{Th:complexity-acc-pADMM-4} with the following relations:
				\begin{equation*}
					\left\{
					\begin{array}{l}
						\bz^{k} = {\rm Prox}_{p_2}(\bz^{k} - \nabla g_2(\bz^k) - B_2^{*}\bx^{k} - \cT_2(\bz^{k} - z^{k})), \\
						\by^{k} = {\rm Prox}_{p_1}(\by^{k} - \nabla g_1(\by^k) - B_1^{*}(\bx^{k} + \sigma(B_1\by^{k} + B_2\bz^{k} - c)) - \cT_1(\by^{k} - y^{k})),
					\end{array}
					\right.
				\end{equation*}
				one can verify in a straightforward way that the iteration complexity results in Theorem \ref{Th:complexity-acc-pADMM} remain valid for $\mathcal{\widetilde{R}}(\cdot)$. In the numerical experiments for solving convex QP problems, we use $\mathcal{\widetilde{R}}(\cdot)$ in \eqref{def:KKT_residual-2} instead of $\mathcal{R}(\cdot)$ in \eqref{def:KKT_residual} to construct the stopping criterion.

			\end{remark}
		}
		
		\section{Numerical experiments}\label{Sec: Numerical}
		In this section, we will utilize the convex {QP} problem as an illustrative case to compare the performance of the pADMM in Algorithm \ref{alg:pADMM} and the accelerated pADMM (acc-pADMM) in Algorithm \ref{alg:acc-pADMM}. Specifically, the high-dimensional convex QP problems can be formulated in the following standard form:
		\begin{equation}\label{QP-primal}
			\min _{x \in \mathbb{R}^n}\left\{\frac{1}{2}\langle x, Q x\rangle+\langle c, x\rangle \mid A x=b, x \in C\right\},
		\end{equation}
		where $c \in \mathbb{R}^n$, $b \in \mathbb{R}^m$, and $C=\left\{x \in \mathbb{R}^n: l \leq x \leq u\right\}$, with $\ell, u \in \mathbb{R}^n$ being given vectors satisfying $-\infty \leq l \leq u \leq +\infty$. Furthermore,  $A \in \mathbb{R}^{m \times n}$ has full row rank, and $Q\in  \mathbb{R}^{n \times n}$ is a symmetric positive semidefinite matrix. The corresponding restricted-Wolfe dual \cite{li2018qsdpnal} of problem \eqref{QP-primal} can be expressed as follows:
		\begin{equation}\label{QP-dual}
			\min _{(z_1,z_2,y ) \in   \mathbb{R}^n \times \mathbb{R}^{m} \times \mathbb{R}^{n}}\left\{\frac{1}{2}\langle y, Q y\rangle +\delta_C^*(-z_1)  -\langle b, z_2\rangle \mid -Q y+z_1+A^*z_2=c, \ y \in \mathcal{Y}\right\},
		\end{equation}
		where $\mathcal{Y}$ represents any subspace of $\mathbb{R}^n$ containing {the range space of $Q$ denoted by $\operatorname{Range}(Q)$}, and $\delta_C^*(\cdot)$ represents the convex conjugate of the indicator function $\delta_C(\cdot)$ of the set $C$. In this article, we assume that $\mathcal{Y}=\operatorname{Range}(Q)$. Let $y\in \mathbb{R}^{n}, z=(z_1,z_2)\in \mathbb{R}^{n} \times \mathbb{R}^{m}$, $B_1=-Q$, $B_2=[I,A^*]$, $f_1(y)=\frac{1}{2}\langle y, Q y\rangle$, and $f_2(z)=\delta_C^*(-z_1)-\langle b, z_2\rangle$. Consequently, problem \eqref{QP-dual} can be reformulated as a two-block convex optimization problem in \eqref{primal}.

		Given $\sigma>0$, we define the augmented Lagrangian function $L_\sigma(y, z_1, z_2 ; x)$ associated with problem \eqref{QP-dual} for any $(y, z_1, z_2, x) \in \mathcal{Y} \times \mathbb{R}^{n} \times \mathbb{R}^{m} \times \mathbb{R}^n$ as follows:
		$$\begin{array}{ll}
			L_\sigma(y, z_1, z_2 ; x)&=\frac{1}{2}\langle y, Q y\rangle+\delta_C^*(-z_1)-\langle b, z_2\rangle\\
			&\qquad +\frac{\sigma}{2}\left\|-Q y+z_1+A^*z_2-c+\sigma^{-1} x\right\|^2-\frac{1}{2 \sigma}\|x\|^2.
		\end{array}
		$$
		To simplify the solution of subproblems involving the variable $z = (z_1, z_2)$, we introduce a specific operator $\mathcal{T}_2$ called the {sGS} operator \cite{li2016schur,li2019block}. Specifically, given $\sigma > 0$, we define a self-adjoint linear operator $\widetilde{B}:\mathbb{R}^{n}\times \mathbb{R}^{m}\rightarrow\mathbb{R}^{n}\times \mathbb{R}^{m}$ as follows:
		$$
		\widetilde{B}:=\sigma B_2^{*}B_2=\sigma\left[ \begin{array}{cc}
			I &  A^{*}  \\
			A&   AA^{*}
		\end{array}   \right].
		$$
		Then we decompose $\widetilde{B}$ as:
		$$
		\widetilde{B}=U_{\widetilde{B}}+D_{\widetilde{B}}+U_{\widetilde{B}}^{*},
		$$
		where
		$$
		U_{\widetilde{B}}=\sigma\left[ \begin{array}{cc}
			0 & A^{*}   \\
			0& 0
		\end{array}   \right], \quad D_{\widetilde{B}}=\sigma\left[ \begin{array}{cc}
			I & 0  \\
			0& AA^{*}
		\end{array}   \right].
		$$
		The sGS operator associated with $\widetilde{B}$ denoted by $\mathrm{sGS}(\widetilde{B}):\mathbb{R}^{n}\times \mathbb{R}^{m}\rightarrow\mathbb{R}^{n}\times \mathbb{R}^{m}$ can be defined as follows:
		$$
		\mathrm{sGS}(\widetilde{B})=U_{\widetilde{B}} D_{\widetilde{B}}^{-1} U_{\widetilde{B}}^{*}.
		$$
		Let $\mathcal{T}_1=0$ and $\mathcal{T}_2=\mathrm{sGS}(\widetilde{B})$. We can obtain
		$$
		\sigma B_1^*B_1 + \mathcal{T}_1 + \Sigma_{f_1} = \sigma Q^2 + Q,
		$$
		which is positive definite in $\mathcal{Y}$. Furthermore, according to \cite[Theorem 1]{li2019block}, we also know that
		$$
		\sigma B_2^*B_2 + \mathcal{T}_2 + \Sigma_{f_2} = \widetilde{B} + \mathrm{sGS}(\widetilde{B})
		$$
		is positive definite. {As a result, we can {apply the} acc-pADMM, presented in Algorithm \ref{alg:acc-pADMM-QP}, to {solve} problem \eqref{QP-dual}, thereby {also} effectively solving problem \eqref{QP-primal}.} Note that according to \cite[Thoerem 1]{li2019block}, Step 1 (1-2-3) in Algorithm \ref{alg:acc-pADMM-QP} is equivalent to
		$$
		(\bz_1^k,\bz_2^k)=\underset{ (z_1,z_2) \in \mathbb{R}^{n}\times \mathbb{R}^{m}}{\arg \min }\left\{L_\sigma(y^k, z_1, z_2 ; x^k)+\frac{1}{2}\|z-z^k\|_{\mathrm{sGS}(\widetilde{B})}^2\right\}.
		$$
		In addition, by omitting Step 5 in Algorithm \ref{alg:acc-pADMM-QP} and choosing $\rho\in (0,2)$, we can derive a pADMM for solving problem \eqref{QP-dual}.
		\begin{algorithm}[ht]
			\caption{An accelerated pADMM for solving problem \eqref{QP-dual}}
			\label{alg:acc-pADMM-QP}
			\begin{algorithmic}[1]
				\STATE {Input: Choose $w^{0}=(y^0,z_1^0,z_2^0,x^0)\in   \mathcal{Y} \times \mathbb{R}^{n} \times \mathbb{R}^{m} \times   \mathbb{R}^{n}$. Let $\hat{w}^0:=w^{0}$. Set parameters $\sigma > 0$, $\alpha\geq 2$ and $\rho\in(0,2]$. For $k=0,1, \ldots,$ perform the following steps in each iteration.}
				\STATE {Step 1-1. $\bz_2^{k\prime}=\underset{z_2 \in \mathbb{R}^{m}}{\arg \min }\left\{L_\sigma(y^k, z_1^k, z_2 ; x^k)\right\}$.}
				\STATE {Step 1-2. $\bz_1^{k}=\underset{z_1 \in \mathbb{R}^{n}}{\arg \min }\left\{L_\sigma(y^k, z_1, \bz_2^{k\prime}; x^k)\right\}$.}
				\STATE {Step 1-3. $\bz_2^{k}=\underset{z_2 \in \mathbb{R}^{m}}{\arg \min }\left\{L_\sigma(y^k, \bz_1^k, z_2 ; x^k)\right\}$.}
				\STATE {Step 2. $\bx^{k}={x}^k+\sigma (-Qy^{k}+ \bz_1^k +A^*{\bz_2}^{k}-c) $.}
				\STATE {Step 3. $\by^{k}=\underset{y \in \mathcal{Y}}{\arg \min }\left\{L_\sigma(y, \bz_1^k, \bz_2^k ; \bx^k)\right\}$.}
				\STATE {Step 4. $\hw^{k+1}= (1-\rho){w}^{k}+ \rho \bw^{k}$.}
				\STATE {Step 5. $w^{k+1}=w^k +\frac{\alpha}{2(k+\alpha)} (\hat{w}^{k+1}-w^k)+\frac{k}{k+\alpha}\left(\hat{w}^{k+1}-\hat{w}^{k}\right)$.}
			\end{algorithmic}
		\end{algorithm}	
		
		{In our experiments, we analyze the performance of all {tested} algorithms using a collection of {25} QP problems selected from the Maros-Mészáros repository \cite{maros1999repository}. Since the data in the Maros-Mészáros repository is relatively sparse, we use the sparse Cholesky decomposition to obtain solutions for the linear systems encountered in the subproblems.} In our implementation, we adopt the following stopping criterion based on the relative KKT residual for the acc-pADMM and the pADMM:
		$$
		\mathrm{KKT}_{\mathrm{res}}=\max \left\{\begin{array}{l}\frac{\left\|A x-b\right\|}{1+\|b\|}, \frac{\left\|-Q y+z_1+A^*z_2-c\right\|}{1+\|c\|}, \frac{\left\|Q x-Q y\right\|}{1+\left\|Q x\right\|+\left\|Q y\right\|}, \frac{\left\|x-\Pi_C\left(x-z_1\right)\right\|}{1+\left\|x\right\|+\left\|z_1\right\|} \end{array} \right\} \leq 10^{-5}.
		$$
		We check the $\mathrm{KKT}_{\mathrm{res}}$ every 50 steps and choose $\rho=1.9$ for the pADMM and $\rho=2$ for the acc-pADMM. Recognizing the sensitivity of algorithm performance to parameter $\sigma$, we employ an adaptive adjustment strategy for $\sigma$ similar to \cite{liang2022qppal} to mitigate its impact. We expect that each algorithm can adaptively select an appropriate $\sigma$. Furthermore, we have observed that restarting significantly enhances the performance of acc-pADMM. Therefore, in this paper, we will restart the acc-pADMM every 200 steps or whenever $\sigma$ varies. Finally, we set the maximum number of iterations as 10000 for all {tested} algorithms. All the numerical experiments in this paper are obtained by running Matlab R2022b on a desktop with Intel(R) Core i9-12900HQ CPU @2.40GHz.
		
		We summarize the computational results for {the tested} 25 problems in Table \ref{tab:my-table-1}. For detailed numerical results of each problem, one may refer to Table \ref{tab:my-table}. {Here, we highlight some key observations}: (1) Firstly, the acc-pADMM demonstrates superior performance compared to the pADMM in the majority of examples. Specifically, the acc-pADMM reduces the number of iterations by an average of 60\% compared to the pADMM. {Moreover}, while the acc-pADMM takes slightly more time per iteration than the pADMM, the acc-pADMM with $\alpha=15$ can still reduce the average time by 47\% compared to the pADMM. (2) Secondly, {the acceleration effect of the acc-pADMM becomes increasingly significant as the iteration number of the pADMM increases. Notably, the acc-pADMM demonstrates its ability to successfully solve challenging problems that pose difficulties for the pADMM, as evidenced by the results of solving problems 8, 11, and 16 in Table \ref{tab:my-table}. In contrast, when the iteration number of the pADMM is very small, the acceleration effect is generally less noticeable, and there are instances where even the acc-pADMM performs worse than the pADMM, as seen in the results of solving problems 1, 2, and 7 in Table \ref{tab:my-table}.} (3) Thirdly, the parameter $\alpha$ also plays a crucial role in the performance of the acc-pADMM. As shown in Table \ref{tab:my-table-1}, we observe that the iteration number of the acc-pADMM decreases as $\alpha$ increases within an appropriate interval. However, when $\alpha$ exceeds a certain threshold, the iteration number of the acc-pADMM starts to increase. To visualize this observation, we present the variations of $\mathrm{KKT}_{\mathrm{res}}$ as the acc-pADMM solves problem 4 with different choices of $\alpha$ in Figure \ref{Fig:KKT_alpha}. Specifically, it is evident from Figure \ref{Fig:KKT_alpha} that the acc-pADMM with $\alpha=30$ achieves  {a satisfactory solution} in the fewest iterations. Conversely, when $\alpha>30$, the acc-pADMM requires more iterations to return a  {satisfactory solution}.		
		
		\begin{table}[H]
			\caption{A comparison between the pADMM and the acc-pADMM for solving 25 QP problems selected from the Maros-Mészáros collection. In the table, ``a", ``$b_{2}$", ``$b_{15}$", ``$b_{30}$", and ``$b_{45}$" denote the pADMM, and the acc-pADMM with $\alpha=2,15,30,45$, respectively.}
			\label{tab:my-table-1}
			\resizebox{1.0\textwidth}{!}{
				\renewcommand\arraystretch{1.5}
				\begin{tabular}{|c|ccccc|ccccc|ccccc|}
					\hline
					& \multicolumn{5}{c|}{Iter}                                                                                                             & \multicolumn{5}{c|}{Time (s)}                                                                                                         & \multicolumn{5}{c|}{$\mathrm{KKT}_{\mathrm{res}}$}                                                                                                            \\ \hline
					25 problems    & \multicolumn{1}{c|}{a}      & \multicolumn{1}{c|}{$b_{2}$} & \multicolumn{1}{c|}{$b_{15}$} & \multicolumn{1}{c|}{$b_{30}$} & $b_{45}$ & \multicolumn{1}{c|}{a}      & \multicolumn{1}{c|}{$b_{2}$} & \multicolumn{1}{c|}{$b_{15}$} & \multicolumn{1}{c|}{$b_{30}$} & $b_{45}$ & \multicolumn{1}{c|}{a}        & \multicolumn{1}{c|}{$b_{2}$}  & \multicolumn{1}{c|}{$b_{15}$} & \multicolumn{1}{c|}{$b_{30}$} & $b_{45}$ \\ \hline
					Mean           & \multicolumn{1}{c|}{3086}   & \multicolumn{1}{c|}{934}     & \multicolumn{1}{c|}{912}      & \multicolumn{1}{c|}{1128}     & 1170     & \multicolumn{1}{c|}{6.44}   & \multicolumn{1}{c|}{3.46}    & \multicolumn{1}{c|}{3.35}     & \multicolumn{1}{c|}{4.33}     & 5.16     & \multicolumn{1}{c|}{3.27E-05} & \multicolumn{1}{c|}{5.06E-06} & \multicolumn{1}{c|}{5.30E-06} & \multicolumn{1}{c|}{5.11E-06} & 1.87E-05 \\ \hline
					Standard Error & \multicolumn{1}{c|}{689.17} & \multicolumn{1}{c|}{254.29}  & \multicolumn{1}{c|}{314.20}   & \multicolumn{1}{c|}{403.24}   & 411.83   & \multicolumn{1}{c|}{5.01}   & \multicolumn{1}{c|}{2.39}    & \multicolumn{1}{c|}{2.34}     & \multicolumn{1}{c|}{3.26}     & 3.94     & \multicolumn{1}{c|}{1.49E-05} & \multicolumn{1}{c|}{5.53E-07} & \multicolumn{1}{c|}{7.02E-07} & \multicolumn{1}{c|}{6.32E-07} & 1.33E-05 \\ \hline
					Median         & \multicolumn{1}{c|}{1750}   & \multicolumn{1}{c|}{450}     & \multicolumn{1}{c|}{350}      & \multicolumn{1}{c|}{350}      & 350      & \multicolumn{1}{c|}{0.18}   & \multicolumn{1}{c|}{0.06}    & \multicolumn{1}{c|}{0.06}     & \multicolumn{1}{c|}{0.06}     & 0.05     & \multicolumn{1}{c|}{9.02E-06} & \multicolumn{1}{c|}{5.14E-06} & \multicolumn{1}{c|}{6.22E-06} & \multicolumn{1}{c|}{4.64E-06} & 5.89E-06 \\ \hline
					Minimum        & \multicolumn{1}{c|}{50}     & \multicolumn{1}{c|}{150}     & \multicolumn{1}{c|}{100}      & \multicolumn{1}{c|}{50}       & 50       & \multicolumn{1}{c|}{0.01}   & \multicolumn{1}{c|}{0.00}    & \multicolumn{1}{c|}{0.00}     & \multicolumn{1}{c|}{0.00}     & 0.00     & \multicolumn{1}{c|}{9.02E-14} & \multicolumn{1}{c|}{3.18E-07} & \multicolumn{1}{c|}{1.30E-11} & \multicolumn{1}{c|}{4.71E-09} & 4.53E-11 \\ \hline
					Maximum        & \multicolumn{1}{c|}{10000}  & \multicolumn{1}{c|}{5850}    & \multicolumn{1}{c|}{7150}     & \multicolumn{1}{c|}{8650}     & 10000    & \multicolumn{1}{c|}{123.81} & \multicolumn{1}{c|}{55.80}   & \multicolumn{1}{c|}{55.17}    & \multicolumn{1}{c|}{79.53}    & 96.40    & \multicolumn{1}{c|}{3.10E-04} & \multicolumn{1}{c|}{9.86E-06} & \multicolumn{1}{c|}{9.91E-06} & \multicolumn{1}{c|}{9.94E-06} & 3.37E-04 \\ \hline
			\end{tabular}}
		\end{table}
		
		\begin{table}[htbp]
			\centering
			\rotatebox[origin=c]{90}{%
				\begin{varwidth}{\textheight}
					\caption{Numerical results for 25 QP problems selected from Maros-Mészáros collection. In the table, ``a", ``$b_{2}$", ``$b_{15}$", ``$b_{30}$", and ``$b_{45}$" denote the pADMM, and the acc-pADMM with $\alpha=2,15,30,45$, respectively.}
					\label{tab:my-table}
					\resizebox{1.0\textwidth}{!}{
						\renewcommand\arraystretch{1.24}
						\begin{tabular}{|c|c|c|c|ccccc|ccccc|ccccc|}
							\hline
							&          &       &       & \multicolumn{5}{c|}{Iter}                                                                                                            & \multicolumn{5}{c|}{Time (s)}                                                                                                         & \multicolumn{5}{c|}{$\mathrm{KKT}_{\mathrm{res}}$}                                                                                                          \\ \hline
							& problem  & m     & n     & \multicolumn{1}{c|}{a}     & \multicolumn{1}{c|}{$b_{2}$} & \multicolumn{1}{c|}{$b_{15}$} & \multicolumn{1}{c|}{$b_{30}$} & $b_{45}$ & \multicolumn{1}{c|}{a}      & \multicolumn{1}{c|}{$b_{2}$} & \multicolumn{1}{c|}{$b_{15}$} & \multicolumn{1}{c|}{$b_{30}$} & $b_{45}$ & \multicolumn{1}{c|}{a}       & \multicolumn{1}{c|}{$b_{2}$} & \multicolumn{1}{c|}{$b_{15}$} & \multicolumn{1}{c|}{$b_{30}$} & $b_{45}$ \\ \hline
							1  & AUG2D    & 9604  & 19404 & \multicolumn{1}{c|}{50}    & \multicolumn{1}{c|}{150}     & \multicolumn{1}{c|}{100}      & \multicolumn{1}{c|}{50}       & 50       & \multicolumn{1}{c|}{0.18}   & \multicolumn{1}{c|}{0.19}    & \multicolumn{1}{c|}{0.13}     & \multicolumn{1}{c|}{0.07}     & 0.09     & \multicolumn{1}{c|}{9.0E-14} & \multicolumn{1}{c|}{6.1E-06} & \multicolumn{1}{c|}{1.3E-11}  & \multicolumn{1}{c|}{4.6E-06}  & 1.7E-06  \\ \hline
							2  & AUG2DC   & 10000 & 20200 & \multicolumn{1}{c|}{50}    & \multicolumn{1}{c|}{150}     & \multicolumn{1}{c|}{100}      & \multicolumn{1}{c|}{50}       & 50       & \multicolumn{1}{c|}{0.08}   & \multicolumn{1}{c|}{0.19}    & \multicolumn{1}{c|}{0.12}     & \multicolumn{1}{c|}{0.07}     & 0.07     & \multicolumn{1}{c|}{9.4E-14} & \multicolumn{1}{c|}{6.2E-06} & \multicolumn{1}{c|}{1.3E-11}  & \multicolumn{1}{c|}{4.5E-06}  & 1.6E-06  \\ \hline
							3  & AUG3DQP  & 972   & 3133  & \multicolumn{1}{c|}{150}   & \multicolumn{1}{c|}{350}     & \multicolumn{1}{c|}{100}      & \multicolumn{1}{c|}{100}      & 100      & \multicolumn{1}{c|}{0.04}   & \multicolumn{1}{c|}{0.06}    & \multicolumn{1}{c|}{0.02}     & \multicolumn{1}{c|}{0.03}     & 0.02     & \multicolumn{1}{c|}{6.6E-08} & \multicolumn{1}{c|}{3.2E-07} & \multicolumn{1}{c|}{8.7E-06}  & \multicolumn{1}{c|}{4.1E-08}  & 3.5E-08  \\ \hline
							4  & CONT-101 & 9801  & 9900  & \multicolumn{1}{c|}{1850}  & \multicolumn{1}{c|}{1400}    & \multicolumn{1}{c|}{1400}     & \multicolumn{1}{c|}{1350}     & 1750     & \multicolumn{1}{c|}{5.62}   & \multicolumn{1}{c|}{4.26}    & \multicolumn{1}{c|}{4.25}     & \multicolumn{1}{c|}{4.14}     & 5.32     & \multicolumn{1}{c|}{9.1E-06} & \multicolumn{1}{c|}{6.8E-06} & \multicolumn{1}{c|}{4.8E-06}  & \multicolumn{1}{c|}{9.5E-06}  & 8.5E-06  \\ \hline
							5  & CONT-201 & 39601 & 39800 & \multicolumn{1}{c|}{1800}  & \multicolumn{1}{c|}{1600}    & \multicolumn{1}{c|}{1500}     & \multicolumn{1}{c|}{1500}     & 1650     & \multicolumn{1}{c|}{26.96}  & \multicolumn{1}{c|}{24.48}   & \multicolumn{1}{c|}{22.72}    & \multicolumn{1}{c|}{22.79}    & 25.39    & \multicolumn{1}{c|}{8.5E-06} & \multicolumn{1}{c|}{9.6E-06} & \multicolumn{1}{c|}{9.3E-06}  & \multicolumn{1}{c|}{9.9E-06}  & 8.1E-06  \\ \hline
							6  & CONT-300 & 89401 & 89700 & \multicolumn{1}{c|}{3250}  & \multicolumn{1}{c|}{1450}    & \multicolumn{1}{c|}{1450}     & \multicolumn{1}{c|}{2050}     & 2500     & \multicolumn{1}{c|}{123.81} & \multicolumn{1}{c|}{55.80}   & \multicolumn{1}{c|}{55.17}    & \multicolumn{1}{c|}{79.53}    & 96.40    & \multicolumn{1}{c|}{7.9E-06} & \multicolumn{1}{c|}{8.0E-06} & \multicolumn{1}{c|}{9.9E-06}  & \multicolumn{1}{c|}{9.8E-06}  & 9.6E-06  \\ \hline
							7  & GOULDQP3 & 349   & 699   & \multicolumn{1}{c|}{150}   & \multicolumn{1}{c|}{300}     & \multicolumn{1}{c|}{150}      & \multicolumn{1}{c|}{150}      & 150      & \multicolumn{1}{c|}{0.01}   & \multicolumn{1}{c|}{0.01}    & \multicolumn{1}{c|}{0.01}     & \multicolumn{1}{c|}{0.01}     & 0.01     & \multicolumn{1}{c|}{9.2E-07} & \multicolumn{1}{c|}{3.0E-06} & \multicolumn{1}{c|}{6.9E-06}  & \multicolumn{1}{c|}{5.0E-06}  & 4.5E-06  \\ \hline
							8  & HS118    & 29    & 44    & \multicolumn{1}{c|}{10000} & \multicolumn{1}{c|}{300}     & \multicolumn{1}{c|}{200}      & \multicolumn{1}{c|}{250}      & 250      & \multicolumn{1}{c|}{0.14}   & \multicolumn{1}{c|}{0.00}    & \multicolumn{1}{c|}{0.00}     & \multicolumn{1}{c|}{0.00}     & 0.00     & \multicolumn{1}{c|}{1.9E-04} & \multicolumn{1}{c|}{1.8E-06} & \multicolumn{1}{c|}{8.0E-07}  & \multicolumn{1}{c|}{2.6E-06}  & 8.4E-06  \\ \hline
							9  & KSIP     & 1000  & 1020  & \multicolumn{1}{c|}{600}   & \multicolumn{1}{c|}{650}     & \multicolumn{1}{c|}{550}      & \multicolumn{1}{c|}{550}      & 650      & \multicolumn{1}{c|}{0.49}   & \multicolumn{1}{c|}{0.53}    & \multicolumn{1}{c|}{0.45}     & \multicolumn{1}{c|}{0.46}     & 0.53     & \multicolumn{1}{c|}{1.0E-05} & \multicolumn{1}{c|}{7.9E-06} & \multicolumn{1}{c|}{9.0E-06}  & \multicolumn{1}{c|}{9.0E-06}  & 6.3E-06  \\ \hline
							10 & QRECIPE  & 59    & 116   & \multicolumn{1}{c|}{1100}  & \multicolumn{1}{c|}{1050}    & \multicolumn{1}{c|}{350}      & \multicolumn{1}{c|}{350}      & 350      & \multicolumn{1}{c|}{0.02}   & \multicolumn{1}{c|}{0.02}    & \multicolumn{1}{c|}{0.01}     & \multicolumn{1}{c|}{0.01}     & 0.01     & \multicolumn{1}{c|}{9.1E-06} & \multicolumn{1}{c|}{2.7E-06} & \multicolumn{1}{c|}{5.3E-07}  & \multicolumn{1}{c|}{4.7E-09}  & 4.5E-11  \\ \hline
							11 & QSCAGR25 & 274   & 473   & \multicolumn{1}{c|}{10000} & \multicolumn{1}{c|}{3800}    & \multicolumn{1}{c|}{4400}     & \multicolumn{1}{c|}{6350}     & 3700     & \multicolumn{1}{c|}{0.29}   & \multicolumn{1}{c|}{0.11}    & \multicolumn{1}{c|}{0.13}     & \multicolumn{1}{c|}{0.19}     & 0.11     & \multicolumn{1}{c|}{3.1E-04} & \multicolumn{1}{c|}{8.4E-06} & \multicolumn{1}{c|}{8.5E-06}  & \multicolumn{1}{c|}{7.8E-06}  & 8.9E-06  \\ \hline
							12 & QSCORPIO & 161   & 226   & \multicolumn{1}{c|}{1300}  & \multicolumn{1}{c|}{350}     & \multicolumn{1}{c|}{250}      & \multicolumn{1}{c|}{250}      & 250      & \multicolumn{1}{c|}{0.03}   & \multicolumn{1}{c|}{0.01}    & \multicolumn{1}{c|}{0.01}     & \multicolumn{1}{c|}{0.01}     & 0.01     & \multicolumn{1}{c|}{9.3E-06} & \multicolumn{1}{c|}{5.1E-06} & \multicolumn{1}{c|}{1.6E-06}  & \multicolumn{1}{c|}{1.6E-06}  & 1.6E-06  \\ \hline
							13 & QSCRS8   & 192   & 945   & \multicolumn{1}{c|}{9850}  & \multicolumn{1}{c|}{5850}    & \multicolumn{1}{c|}{7150}     & \multicolumn{1}{c|}{8650}     & 10000    & \multicolumn{1}{c|}{0.34}   & \multicolumn{1}{c|}{0.20}    & \multicolumn{1}{c|}{0.25}     & \multicolumn{1}{c|}{0.31}     & 0.34     & \multicolumn{1}{c|}{9.5E-06} & \multicolumn{1}{c|}{8.1E-06} & \multicolumn{1}{c|}{9.5E-06}  & \multicolumn{1}{c|}{8.8E-06}  & 3.4E-04  \\ \hline
							14 & QSCSD1   & 77    & 760   & \multicolumn{1}{c|}{1050}  & \multicolumn{1}{c|}{500}     & \multicolumn{1}{c|}{250}      & \multicolumn{1}{c|}{300}      & 300      & \multicolumn{1}{c|}{0.04}   & \multicolumn{1}{c|}{0.02}    & \multicolumn{1}{c|}{0.01}     & \multicolumn{1}{c|}{0.01}     & 0.01     & \multicolumn{1}{c|}{6.2E-06} & \multicolumn{1}{c|}{6.6E-06} & \multicolumn{1}{c|}{2.3E-06}  & \multicolumn{1}{c|}{2.6E-06}  & 2.0E-07  \\ \hline
							15 & QSCSD8   & 397   & 2750  & \multicolumn{1}{c|}{2650}  & \multicolumn{1}{c|}{750}     & \multicolumn{1}{c|}{900}      & \multicolumn{1}{c|}{900}      & 950      & \multicolumn{1}{c|}{0.26}   & \multicolumn{1}{c|}{0.08}    & \multicolumn{1}{c|}{0.09}     & \multicolumn{1}{c|}{0.09}     & 0.10     & \multicolumn{1}{c|}{9.4E-06} & \multicolumn{1}{c|}{1.6E-06} & \multicolumn{1}{c|}{6.1E-07}  & \multicolumn{1}{c|}{7.7E-07}  & 5.0E-06  \\ \hline
							16 & QSCTAP2  & 977   & 2303  & \multicolumn{1}{c|}{10000} & \multicolumn{1}{c|}{450}     & \multicolumn{1}{c|}{600}      & \multicolumn{1}{c|}{1650}     & 2500     & \multicolumn{1}{c|}{0.87}   & \multicolumn{1}{c|}{0.05}    & \multicolumn{1}{c|}{0.06}     & \multicolumn{1}{c|}{0.15}     & 0.23     & \multicolumn{1}{c|}{1.6E-04} & \multicolumn{1}{c|}{3.4E-06} & \multicolumn{1}{c|}{6.2E-06}  & \multicolumn{1}{c|}{9.4E-06}  & 8.6E-06  \\ \hline
							17 & QSCTAP3  & 1274  & 3041  & \multicolumn{1}{c|}{4200}  & \multicolumn{1}{c|}{450}     & \multicolumn{1}{c|}{500}      & \multicolumn{1}{c|}{700}      & 1000     & \multicolumn{1}{c|}{0.46}   & \multicolumn{1}{c|}{0.06}    & \multicolumn{1}{c|}{0.06}     & \multicolumn{1}{c|}{0.09}     & 0.12     & \multicolumn{1}{c|}{6.2E-06} & \multicolumn{1}{c|}{1.8E-06} & \multicolumn{1}{c|}{8.5E-06}  & \multicolumn{1}{c|}{4.9E-06}  & 5.3E-06  \\ \hline
							18 & QSHIP04L & 288   & 1901  & \multicolumn{1}{c|}{650}   & \multicolumn{1}{c|}{350}     & \multicolumn{1}{c|}{300}      & \multicolumn{1}{c|}{300}      & 300      & \multicolumn{1}{c|}{0.04}   & \multicolumn{1}{c|}{0.02}    & \multicolumn{1}{c|}{0.02}     & \multicolumn{1}{c|}{0.02}     & 0.02     & \multicolumn{1}{c|}{7.3E-06} & \multicolumn{1}{c|}{2.6E-06} & \multicolumn{1}{c|}{3.9E-06}  & \multicolumn{1}{c|}{4.6E-06}  & 5.7E-06  \\ \hline
							19 & QSHIP04S & 188   & 1253  & \multicolumn{1}{c|}{7350}  & \multicolumn{1}{c|}{300}     & \multicolumn{1}{c|}{250}      & \multicolumn{1}{c|}{300}      & 300      & \multicolumn{1}{c|}{0.27}   & \multicolumn{1}{c|}{0.01}    & \multicolumn{1}{c|}{0.01}     & \multicolumn{1}{c|}{0.01}     & 0.01     & \multicolumn{1}{c|}{9.3E-06} & \multicolumn{1}{c|}{5.5E-06} & \multicolumn{1}{c|}{1.3E-06}  & \multicolumn{1}{c|}{2.8E-06}  & 5.9E-06  \\ \hline
							20 & QSHIP08L & 478   & 3137  & \multicolumn{1}{c|}{950}   & \multicolumn{1}{c|}{300}     & \multicolumn{1}{c|}{250}      & \multicolumn{1}{c|}{250}      & 250      & \multicolumn{1}{c|}{0.18}   & \multicolumn{1}{c|}{0.06}    & \multicolumn{1}{c|}{0.06}     & \multicolumn{1}{c|}{0.06}     & 0.05     & \multicolumn{1}{c|}{1.0E-05} & \multicolumn{1}{c|}{9.9E-06} & \multicolumn{1}{c|}{7.2E-06}  & \multicolumn{1}{c|}{4.7E-06}  & 8.7E-06  \\ \hline
							21 & QSHIP08S & 256   & 1578  & \multicolumn{1}{c|}{1300}  & \multicolumn{1}{c|}{300}     & \multicolumn{1}{c|}{200}      & \multicolumn{1}{c|}{250}      & 250      & \multicolumn{1}{c|}{0.09}   & \multicolumn{1}{c|}{0.02}    & \multicolumn{1}{c|}{0.02}     & \multicolumn{1}{c|}{0.02}     & 0.02     & \multicolumn{1}{c|}{8.9E-06} & \multicolumn{1}{c|}{3.1E-06} & \multicolumn{1}{c|}{5.0E-06}  & \multicolumn{1}{c|}{3.0E-06}  & 7.2E-06  \\ \hline
							22 & QSHIP12L & 637   & 4226  & \multicolumn{1}{c|}{2150}  & \multicolumn{1}{c|}{450}     & \multicolumn{1}{c|}{300}      & \multicolumn{1}{c|}{350}      & 350      & \multicolumn{1}{c|}{0.52}   & \multicolumn{1}{c|}{0.12}    & \multicolumn{1}{c|}{0.09}     & \multicolumn{1}{c|}{0.10}     & 0.10     & \multicolumn{1}{c|}{8.4E-06} & \multicolumn{1}{c|}{2.6E-06} & \multicolumn{1}{c|}{9.4E-06}  & \multicolumn{1}{c|}{4.2E-06}  & 5.5E-06  \\ \hline
							23 & QSHIP12S & 322   & 1953  & \multicolumn{1}{c|}{1750}  & \multicolumn{1}{c|}{550}     & \multicolumn{1}{c|}{400}      & \multicolumn{1}{c|}{400}      & 400      & \multicolumn{1}{c|}{0.14}   & \multicolumn{1}{c|}{0.05}    & \multicolumn{1}{c|}{0.04}     & \multicolumn{1}{c|}{0.04}     & 0.04     & \multicolumn{1}{c|}{7.3E-06} & \multicolumn{1}{c|}{3.4E-06} & \multicolumn{1}{c|}{4.0E-06}  & \multicolumn{1}{c|}{6.0E-06}  & 6.9E-06  \\ \hline
							24 & QSIERRA  & 915   & 2347  & \multicolumn{1}{c|}{1300}  & \multicolumn{1}{c|}{700}     & \multicolumn{1}{c|}{550}      & \multicolumn{1}{c|}{550}      & 600      & \multicolumn{1}{c|}{0.09}   & \multicolumn{1}{c|}{0.06}    & \multicolumn{1}{c|}{0.05}     & \multicolumn{1}{c|}{0.05}     & 0.05     & \multicolumn{1}{c|}{9.0E-06} & \multicolumn{1}{c|}{7.7E-06} & \multicolumn{1}{c|}{7.9E-06}  & \multicolumn{1}{c|}{8.1E-06}  & 9.0E-06  \\ \hline
							25 & QSTANDAT & 192   & 500   & \multicolumn{1}{c|}{3650}  & \multicolumn{1}{c|}{850}     & \multicolumn{1}{c|}{600}      & \multicolumn{1}{c|}{600}      & 600      & \multicolumn{1}{c|}{0.10}   & \multicolumn{1}{c|}{0.03}    & \multicolumn{1}{c|}{0.02}     & \multicolumn{1}{c|}{0.02}     & 0.02     & \multicolumn{1}{c|}{9.7E-06} & \multicolumn{1}{c|}{4.3E-06} & \multicolumn{1}{c|}{6.6E-06}  & \multicolumn{1}{c|}{3.2E-06}  & 2.6E-06  \\ \hline
						\end{tabular}
					}
			\end{varwidth}}
		\end{table}
		
		\begin{figure}[htbp]
			\centering
			\includegraphics[width=0.6\textwidth]{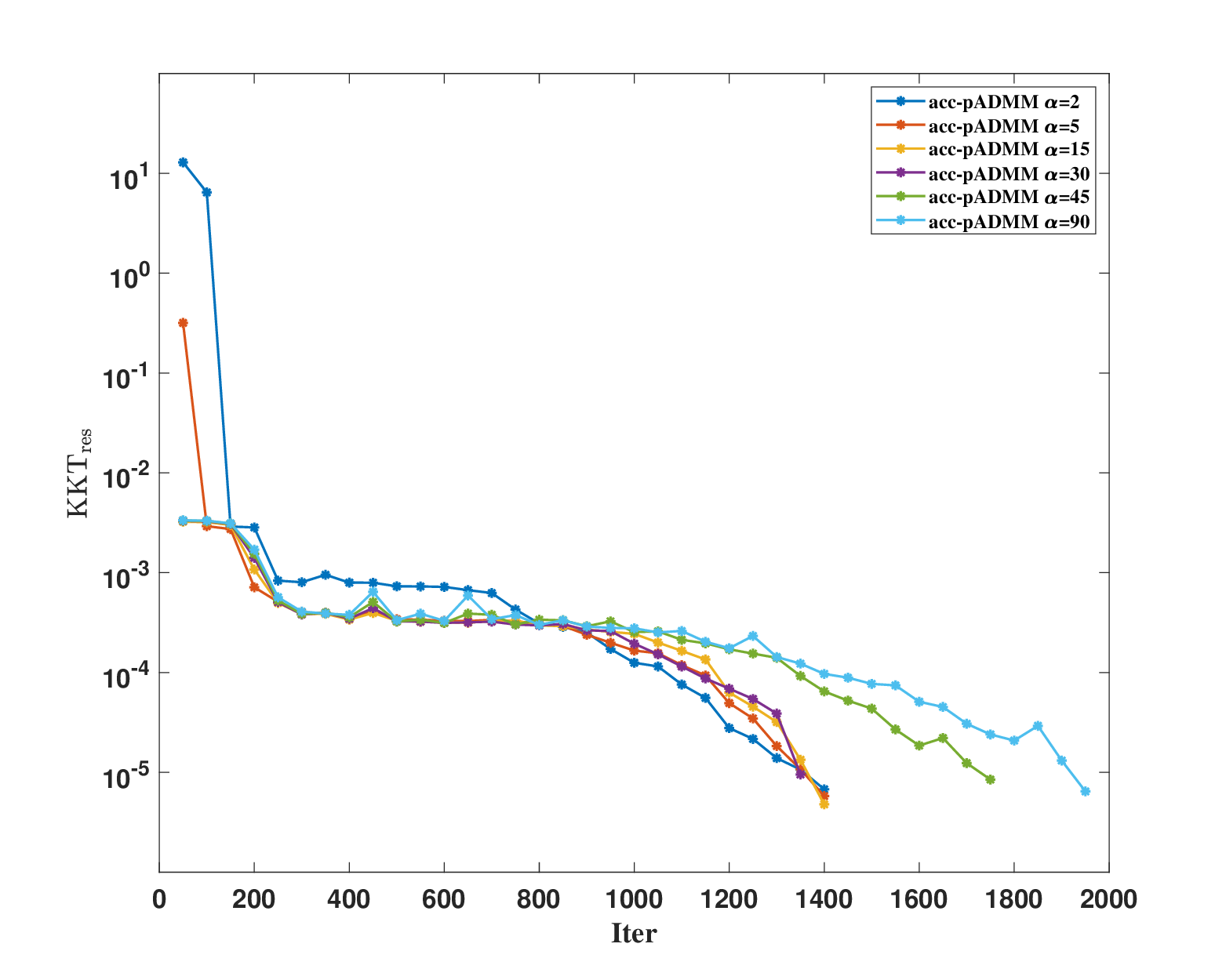}
			\caption{The ${\rm KKT_{res}}$ for problem 4 (CONT-101) obtained by the acc-pADMM with different $\alpha$.}
			\label{Fig:KKT_alpha}
		\end{figure}		
		To further verify the complexity results outlined in Theorem \ref{Th:complexity-acc-pADMM}, we utilize problem 11 listed in Table \ref{tab:my-table} as an illustrative example. In this experiment, we fix $\sigma=1$ and do not employ the restart strategy. The numerical results of ${\rm KKT_{res}}$ and the relative primal function value gap denoted by $|h(y, z)|/(f_1(y^*)+f_2(z^*))$ are presented in Figure \ref{Fig:KKT_alpha3}. Figure \ref{Fig:KKT_alpha3} illustrates that the acc-pADMM achieves non-asymptotic $O(1/k)$ convergence rates when $\alpha=2$, and it attains asymptotic $o(1/k)$ convergence rates when $\alpha>2$. This numerical result is in concordance with the theoretical findings presented in Theorem \ref{Th:complexity-acc-pADMM}.
		\begin{figure}[htbp]
			\centering
			\includegraphics[width=0.6\textwidth]{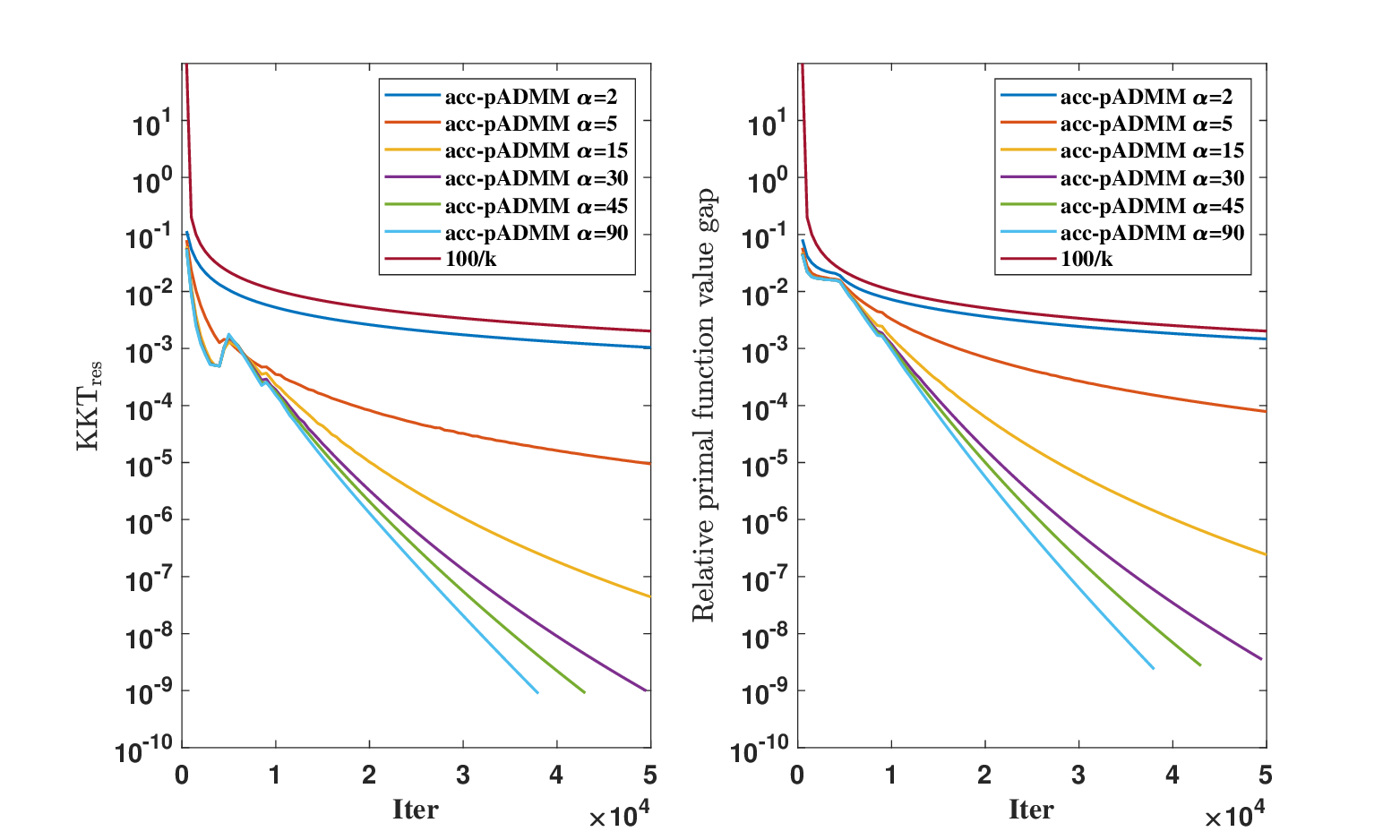}
			\caption{The ${\rm KKT_{res}}$ and relative primal function value gap for problem 11 (QSCAGR25) obtained by the acc-pADMM with different $\alpha$.}
			\label{Fig:KKT_alpha3}
		\end{figure}
		\section{Conclusion}\label{Sec: conclusion}
		In this paper, we proposed an accelerated dPPM with both asymptotic $o(1/k)$ and non-asymptotic $O(1/k)$ convergence rates by unifying the Halpern iteration and the fast Krasnosel'ski\u{i}-Mann iteration. Leveraging the equivalence between the pADMM and the dPPM, we derived an accelerated pADMM, which exhibited both asymptotic $o(1/k)$ and non-asymptotic $O(1/k)$ convergence rates with respect to the KKT residual and the primal objective function value gap. Numerical experiments demonstrated the superior performance of the accelerated pADMM over the pADMM when solving convex {QP} problems. Recently, Kong and Monteiro \cite{kong2024global} have introduced a dampened proximal ADMM for solving linearly constrained nonseparable nonconvex optimization problems, which can obtain an {approximate} first-order stationary point within $O(\varepsilon^{-3})$ iterations for a given tolerance $\varepsilon>0$ under a basic Slater point condition. As {a future research direction}, it {would be}  interesting to extend the acceleration techniques presented in this paper to devise a novel accelerated pADMM variant for tackling nonconvex optimization problems with a better complexity result than $O(\varepsilon^{-3})$.
		
	   {After the announcement of the first version of this paper, there are several new developments that deserve to be mentioned. In particular, an implementation of our accelerated pADMM based on the Halpern iteration, referred to as HPR-LP \cite{chen2024hpr}, has shown promising performance in solving large-scale linear programming problems on GPUs, outperforming the award-winning solver PDLP \cite{applegate2021practical, applegate2023faster, lu2023cupdlp}. This underscores the great potential of the accelerated pADMM for efficiently addressing large-scale convex optimization problems. More recently, Boţ et al. \cite{boct2024generalized} derived a generalized fast KM method with more flexible parameters. It will be very interesting to see how this method can be incorporated into the accelerated pADMM framework to further improve its practical performance.}

		\bibliographystyle{siamplain}
		\bibliography{references}
	\end{document}